\begin{document}
\title{
Standard Conjectures and Height Pairings}
\author{Shou-Wu Zhang}
\maketitle

\setcounter{section}{-1}

\theoremstyle{plain}
\newtheorem{thm}{Theorem}[subsection]
\newtheorem{theorem}[thm]{Theorem}
\newtheorem{proposition}[thm]{Proposition}
\newtheorem{prop}[thm]{Proposition}
\newtheorem{corollary}[thm]{Corollary}
\newtheorem{cor}[thm]{Corollary}
\newtheorem{lemma}[thm]{Lemma}
\newtheorem{lem}[thm]{Lemma}
\newtheorem{conjecture}[thm]{Conjecture}
\newtheorem{conj}[thm]{Conjecture}

\theoremstyle{Definition}
\newtheorem{definition}[thm]{Definition}
\newtheorem{defn}[thm]{Definition}

\theoremstyle{remark} 
\newtheorem{remark}[thm]{Remark}
\newtheorem{example}[thm]{Example}
\newtheorem{notation}[thm]{Notation}
\newtheorem{problem}[thm]{Problem}

\numberwithin{equation}{subsection}

 \newcommand{\sech}{ \mathrm{sech}\,}
 \newcommand{\csch}{\mathrm{csch}\,}

\newcommand{\BA}{{\mathbb {A}}}
\newcommand{\BB}{{\mathbb {B}}}
\newcommand{\BC}{{\mathbb {C}}}
\newcommand{\BD}{{\mathbb {D}}}
\newcommand{\BE}{{\mathbb {E}}}
\newcommand{\BF}{{\mathbb {F}}}
\newcommand{\BG}{{\mathbb {G}}}
\newcommand{\BH}{{\mathbb {H}}}
\newcommand{\BI}{{\mathbb {I}}}
\newcommand{\BJ}{{\mathbb {J}}}
\newcommand{\BK}{{\mathbb {K}}}
\newcommand{\BL}{{\mathbb {L}}}
\newcommand{\BM}{{\mathbb {M}}}
\newcommand{\BN}{{\mathbb {N}}}
\newcommand{\BO}{{\mathbb {O}}}
\newcommand{\BP}{{\mathbb {P}}}
\newcommand{\BQ}{{\mathbb {Q}}}
\newcommand{\BR}{{\mathbb {R}}}
\newcommand{\BS}{{\mathbb {S}}}
\newcommand{\BT}{{\mathbb {T}}}
\newcommand{\BU}{{\mathbb {U}}}
\newcommand{\BV}{{\mathbb {V}}}
\newcommand{\BW}{{\mathbb {W}}}
\newcommand{\BX}{{\mathbb {X}}}
\newcommand{\BY}{{\mathbb {Y}}}
\newcommand{\BZ}{{\mathbb {Z}}}

\newcommand{\CA}{{\mathcal {A}}}
\newcommand{\CB}{{\mathcal {B}}}
\newcommand{\CC}{{\mathcal{C}}}
\renewcommand{\CD}{{\mathcal{D}}}
\newcommand{\CE}{{\mathcal {E}}}
\newcommand{\CF}{{\mathcal {F}}}
\newcommand{\CG}{{\mathcal {G}}}
\newcommand{\CH}{{\mathcal {H}}}
\newcommand{\CI}{{\mathcal {I}}}
\newcommand{\CJ}{{\mathcal {J}}}
\newcommand{\CK}{{\mathcal {K}}}
\newcommand{\CL}{{\mathcal {L}}}
\newcommand{\CM}{{\mathcal {M}}}
\newcommand{\CN}{{\mathcal {N}}}
\newcommand{\CO}{{\mathcal {O}}}
\newcommand{\CP}{{\mathcal {P}}}
\newcommand{\CQ}{{\mathcal {Q}}}
\newcommand{\CR }{{\mathcal {R}}}
\newcommand{\CS}{{\mathcal {S}}}
\newcommand{\CT}{{\mathcal {T}}}
\newcommand{\CU}{{\mathcal {U}}}
\newcommand{\CV}{{\mathcal {V}}}
\newcommand{\CW}{{\mathcal {W}}}
\newcommand{\CX}{{\mathcal {X}}}
\newcommand{\CY}{{\mathcal {Y}}}
\newcommand{\CZ}{{\mathcal {Z}}}

\newcommand{\RA}{{\mathrm {A}}}
\newcommand{\RB}{{\mathrm {B}}}
\newcommand{\RC}{{\mathrm {C}}}
\newcommand{\RD}{{\mathrm {D}}}
\newcommand{\RE}{{\mathrm {E}}}
\newcommand{\RF}{{\mathrm {F}}}
\newcommand{\RG}{{\mathrm {G}}}
\newcommand{\RH}{{\mathrm {H}}}
\newcommand{\RI}{{\mathrm {I}}}
\newcommand{\RJ}{{\mathrm {J}}}
\newcommand{\RK}{{\mathrm {K}}}
\newcommand{\RL}{{\mathrm {L}}}
\newcommand{\RM}{{\mathrm {M}}}
\newcommand{\RN}{{\mathrm {N}}}
\newcommand{\RO}{{\mathrm {O}}}
\newcommand{\RP}{{\mathrm {P}}}
\newcommand{\RQ}{{\mathrm {Q}}}
\newcommand{\RS}{{\mathrm {S}}}
\newcommand{\RT}{{\mathrm {T}}}
\newcommand{\RU}{{\mathrm {U}}}
\newcommand{\RV}{{\mathrm {V}}}
\newcommand{\RW}{{\mathrm {W}}}
\newcommand{\RX}{{\mathrm {X}}}
\newcommand{\RY}{{\mathrm {Y}}}
\newcommand{\RZ}{{\mathrm {Z}}}

\newcommand{\fa}{{\frak a}}
\newcommand{\fg}{{\frak g}}
\newcommand{\fp}{{\frak p}}
\newcommand{\fk}{{\frak k}}
\newcommand{\fq}{{\frak q}}
\newcommand{\fl}{{\frak l}}
\newcommand{\fm}{{\frak m}}

\newcommand{\sL}{{\mathsf L}}
\newcommand{\sLambda}{{\mathsf \Lambda}}

\newcommand{\ab}{{\mathrm{ab}}}
\newcommand{\Ad}{{\mathrm{Ad}}}
\newcommand{\ad}{{\mathrm{ad}}}
\newcommand{\adm}{{\mathrm{adm}}}
\newcommand{\Alb }{{\mathrm{Alb}}}
\newcommand{\an}{{\mathrm{an}}}
\newcommand{\Aut}{{\mathrm{Aut}}}

\newcommand{\bata}{{\bar\eta}}
\newcommand{\Br}{{\mathrm{Br}}}
\newcommand{\bs}{\backslash}
\newcommand{\bbs}{\|\cdot\|}
\newcommand{\brs}{{\breve s}}

\newcommand{\Ch}{{\mathrm{Ch}}}
\newcommand{\cod}{{\mathrm{cod}}}
\newcommand{\coker}{{\mathrm{coker}}}
\newcommand{\Coker}{{\mathrm{Coker}}}
\newcommand{\cont}{{\mathrm{cont}}}
\newcommand{\cl}{{\mathrm{cl}}}
\newcommand{\closed}{{\mathrm{closed}}}
\newcommand{\cm}{{\mathrm{cm}}}

\newcommand{\der}{{\mathrm{der}}}
\newcommand{\dR}{{\mathrm{dR}}}
\newcommand{\disc}{{\mathrm{disc}}}
\newcommand{\Div}{{\mathrm{Div}}}
\renewcommand{\div}{{\mathrm{div}}}

\newcommand{\eff}{{\mathrm{eff}}}
\newcommand{\Eis}{{\mathrm{Eis}}}
\newcommand{\End}{{\mathrm{End}}}
\newcommand{\Ext}{{\mathrm{Ext}}}
\newcommand{\emb}{{\hookrightarrow}}

\newcommand{\fgl}{{\frak {gl}}}

\newcommand{\Frob}{{\mathrm{Frob}}}

\newcommand{\Gal}{{\mathrm{Gal}}}
\newcommand{\GL}{{\mathrm{GL}}}
\newcommand{\GO}{{\mathrm{GO}}}
\newcommand{\GSO}{{\mathrm{GSO}}}
\newcommand{\GSp}{{\mathrm{GSp}}}
\newcommand{\GSpin}{{\mathrm{GSpin}}}
\newcommand{\GU}{{\mathrm{GU}}}
\newcommand{\gr}{{\mathrm{Gr}}}

\newcommand{\Ara}{{\mathrm{Ara}}}
\newcommand{\HC}{{\mathrm{HC}}}
\newcommand{\Hom}{{\mathrm{Hom}}}
\newcommand{\Hol}{{\mathrm{Hol}}}

\renewcommand{\Im}{{\mathrm{Im}}}
\newcommand{\Ind}{{\mathrm{Ind}}}
\newcommand{\inv}{{\mathrm{inv}}}
\newcommand{\Isom}{{\mathrm{Isom}}}

\newcommand{\Jac}{{\mathrm{Jac}}}
\newcommand{\JL}{{\mathrm{JL}}}

\newcommand{\Ker}{{\mathrm{Ker}}}
\newcommand{\KS}{{\mathrm{KS}}}

\newcommand{\Lie}{{\mathrm{Lie}}}

\newcommand{\new}{{\mathrm{new}}}
\newcommand{\NS}{{\mathrm{NS}}}
\newcommand{\NT}{{\mathrm{NT}}}

\newcommand{\ord}{{\mathrm{ord}}}
\newcommand{\ol}{\overline}

\newcommand{\rank}{{\mathrm{rank}}}

\newcommand{\perv}{\mathrm{perv}}
\newcommand{\Perv}{\mathrm{Perv}}
\newcommand{\pH}{{^p\!\CH}}
\newcommand{\PGL}{{\mathrm{PGL}}}
\newcommand{\pR}{{^p\!R}}
\newcommand{\PSL}{{\mathrm{PSL}}}
\newcommand{\Pic}{\mathrm{Pic}}
\newcommand{\Prep}{\mathrm{Prep}}
\newcommand{\Proj}{\mathrm{Proj}}
\newcommand{\PU}{\mathrm{PU}}

\renewcommand{\Re}{{\mathrm{Re}}}
\newcommand{\red}{{\mathrm{red}}}
\newcommand{\reg}{{\mathrm{reg}}}
\newcommand{\Res}{{\mathrm{Res}}}
\newcommand{\RHom}{{\mathcal{RH}om}}

\newcommand{\Sel}{{\mathrm{Sel}}}
\newcommand{\Sh}{{\mathrm{Sh}}}
\newcommand{\Shc}{{\mathcal {Sh}}}
\font\cyr=wncyr10  \newcommand{\Sha}{\hbox{\cyr X}}
\newcommand{\SL}{{\mathrm{SL}}}
\renewcommand{\sl}{{\mathfrak{sl}}}
\newcommand{\SO}{{\mathrm{SO}}}
\newcommand{\Sp}{\mathrm{Sp}}
\newcommand{\Spec}{{\mathrm{Spec}}}
\newcommand{\Sym}{{\mathrm{Sym}}}
\newcommand{\sgn}{{\mathrm{sgn}}}
\newcommand{\Supp}{{\mathrm{Supp}}}

\newcommand{\TC}{{\mathrm{TC}}}

\newcommand{\tr}{{\mathrm{tr}}}

\newcommand{\ur}{{\mathrm{ur}}}

\newcommand{\vol}{{\mathrm{vol}}}
\newcommand{\zar}{{\mathrm{Zar}}}

\newcommand{\WH}{{^W\!H}}
\newcommand{\ws}{{\wt s}}
\newcommand{\wta}{{\wt\eta}}
\newcommand{\wt}{\widetilde}
\newcommand{\pp}{\frac{\partial\bar\partial}{\pi i}}
\newcommand{\ppr}{\frac{\partial\bar\partial}{2\pi i}}
\newcommand{\intn}[1]{\left( {#1} \right)}
\newcommand{\norm}[1]{\|{#1}\|}
\newcommand{\sfrac}[2]{\left( \frac {#1}{#2}\right)}
\newcommand{\ds}{\displaystyle}
\newcommand{\ov}{\overline}
\newcommand{\incl}{\hookrightarrow}
\newcommand{\imp}{\Longrightarrow}
\newcommand{\lto}{\longmapsto}
\newcommand{\surj}{\twoheadrightarrow}

\newcommand{\pair}[1]{\langle {#1} \rangle}

\newcommand{\prim}{\mathrm{prim}}
\newcommand{\wpair}[1]{\left\{{#1}\right\}}
\newcommand\wh{\widehat}
\newcommand\Spf{\mathrm{Spf}}
\newcommand{\lra}{{\longrightarrow}}
\newcommand{\iso}{{\overset\sim\lra}}

\newcommand{\Ei}{\mathrm{Ei}} 


\begin{abstract}
In this article, we extend Grothendieck's standard conjectures \cite[Conjectures 1, 2]{Gr} to cycles on degenerated fibers and use them to define some decompositions for the arithmetic  Chow group of Gillet--Soul\'e. In a local setting, our decompositions provide non-archimedean analogs of  ``{\em harmonic forms}" on K\"ahler manifolds. In a global setting, our decompositions provide canonical arithmetic liftings called ``{\em $\sL$- liftings}"  of algebraic cycles on varieties over number fields and thus provide a new height pairing called the {\em $\sL$-height pairing}  as one extension of Beilinson--Bloch's pairing of homologically trivial cycles.\end{abstract}
\medskip

\centerline{\em To Benedict Gross on his 70th birthday}

\tableofcontents

\section{Introduction}\label{sec-int}

Let $K$ be a number field with $\CO_K$ its ring of integers. Let $f: X\lra \Spec \CO_K$ be a regular arithmetic variety
with a polarization $L$. 
This means that $X$ is regular,  that  $f$ is flat and proper with geometrically connected fibers, and that $L$ is an ample hermitian line bundle in the sense of \cite{Zh}. 
In this paper, we introduce a new height pairing for the Chow group $\Ch^*(X_K)$.
This so-called $\sL$-pairing  extends Beilinson--Bloch's height paring on the subgroup 
$\Ch^*(X_K)^0$ of cycles homologous to $0$.
 Our pairing is conditional on some local and global standard conjectures.
Under our local standard conjectures, we introduce an Arakelov Chow group $\ol\Ch^*(X)$ of cycles 
whose curvature at each place of $K$ are harmonic forms. 
Under our global standard conjectures,  we construct  an $\sL$-lifting for the surjection $\ol\Ch^*(X)\to \Ch^*(X_K)$. 
Our work can be viewed as a preliminary step towards {\em a  Hodge theory for polarized arithmetic varieties}.

When $X$ is an arithmetic surface, our constructions in this paper are more or less well known.
First,  Arakelov \cite{Ar} introduced   
 a compactification  of $X$ by choosing some volume forms $\mu_v$ on the  Riemann surface $X_v(\BC)$ for each archimedean place $v$ of $K$.
 Then, he constructed  an intersection pairing  on the group $\ol\Pic (X)$ of Hermitian  line bundles 
 on $X$ with {\em admissible } metrics in  sense  that their 
 curvatures  on  $X_v(\BC)$ are  multiples of $\mu_v$ for each $v\mid \infty$.
 In \cite{Hr, Fa}, Hriljac and Faltings  independently proved a Hodge index theorem which provides 
 an intersection theoretical way to define the N\'eron--Tate heights on the Jacobian of $X_K$.
 Shortly after that, there were two developments in opposite directions. 
 First of all, Deligne \cite{De85} constructed an intersection pairing on the group $\wh \Pic (X)$ of all metrized line bundles without fixing $\mu_v$.
 Secondly, in a series of papers \cite{Ru, CR, Zh93, BGS95}, Rumely, Chinburg--Rumeley, Bloch--Gillet--Soul\'e and the author constructed some new intersection pairings on more restricted Arakelov Picard group $\ol\Pic (X, \mu)$ by choosing 
 metrics $\mu_v$ on the reduction graphs at {\em every bad places $v$}. For example,  $\mu_v$'s can be taken as the curvatures of an arithmetically ample line bundle $L$ as above. These treated archimedean and non-archimedean places more uniformly. One aim of this paper is to find a similar construction in high dimension case, i.e., {\em construction of harmonic forms, admissible arithmetic Chow cycles, and Hodge decompositions. }
 
 There were partial and delicate developments when $X$ is a high dimensional arithmetic variety.
 First of all,  for Arakelov's original theory, Beilinson and Gillet--Soul\'e independently in \cite{Be, GS84}  introduced compactifications of $X$ by choosing some K\"ahler forms  $\mu _v$  on complex manifold $X_v(\BC)$ for $v\mid\infty$.
 Then, they constructed intersection pairing on the   Arakelov Chow group $\ol \Ch^*(X, \mu)$ using currents with harmonic curvatures on each $X_v(\BC)$.  This intersection pairing has immediate applications to the Beilinson--Bloch height pairing on  $\Ch^*(X_K)^0$, extending Hriljac--Faltings' work for the N\'eron--Tate height pairing. Secondly, for Deligne's pairing,  Gillet and Soul\'e \cite{GS91} define a bigger arithmetic Chow group $\wh \Ch^*(X)$ without fixing 
 K\"ahler forms $\mu_v$ at archimdean place. Finally, in a series of papers \cite{BGS95, BGS97}, Bloch--Gillet--Soul\'e developed a non-archimedean Arakelov theory for  $X/\CO_K$ with strictly semistable reductions.  Assuming  Grothendieck's standard conjectures \cite[Conjectures 1, 2]{Gr}, they defined   harmonic forms using Laplacians
 \cite[Theorem 6]{BGS97}.  Using Bloch--Gillet--Soul\'e's harmonic forms,  K\"unnemann  defined an  Arakelov group \cite[\S3.6]{Ku98a},  and related it to the Beilinson--Bloch height pairing \cite[\S3.8]{Ku98a}
 
 This paper has achieved two primary goals for a Hodge theory in higher dimensional polarized arithmetic varieties. 
 The first is a new definition of harmonic forms for general regular polarized arithmetic varieties.
  We will use Lefschetz operators $\sL$ instead of Laplacian operators $\Delta$, which allows us to define harmonic forms in more general situations, even including cohomology cycles. This idea was inspired by the work of K\"unnemann in a series of papers    \cite{Ku94, Ku95, Ku98a, Ku98b}. The second is a new decomposition theorem for arithmetic Chow groups in the Lefschetz operator $\sL$. This decomposition theorem allows  to
to   define so-called $\sL$-liftings from $\Ch^*(X_K)$ to $\wh\Ch^*(X)$ extending the work of Beilinson and Bloch on 
homologically trivial cycles.
   As a consequence of our new constructions, we will prove the following two statements concerning relations between various standard conjectures 
   and height pairings:
  \begin{itemize}
  \item 
{\em The existence of Beilinson--Bloch pairing follows from Grothendieck's standard conjectures \cite[Conjectures 1, 2]{Gr}.}
\item
{\em Assume our local standard conjectures. Then two arithmetic standard conjectures on $\wh\Ch^*(X)$ by Gillet--Soul\'e and on $\Ch^*(X_K)^0$ 
by Beilinson are  equivalent to each other, as they are both equivalent to a standard conjecture on our Arakelov Chow group $\ol\Ch^*(X)$.}
\end{itemize}

As one application of the first statement, in a recent paper \cite{Zh21}, for the product $X=C\times S$ of a curve and a surface over a number field,  we construct unconditionally a  Beilinson--Bloch type height pairing  (\cite{Be, Bl}) for 
homologically trivial algebraic cycles on $X$. Then for an embedding $\phi: C\lra S$, we define an arithmetic diagonal cycle modified from the graph of $\phi$. This work extends the previous work of Gross and Schoen \cite{GS95} when $S$ is the product of two curves.

This work also arose from an attempt to understand conjectural  Gross--Zagier type formulas for the heights of special cycles of Shimura varieties 
in terms of special values of $L$-series, 
such as the Gan--Gross--Prasad conjecture \cite{GGP} and Kudla's program \cite{Kud}.  Consequently, our conditional construction provides some canonical arithmetic  
 special cycles on integral models of Shimura varieties, including Hecke correspondences. 
 For example, for a Shimura variety $X$ of  orthogonal (resp. unitary type) over  a totally real (resp.  CM)  field $F$, our construction gives canonical arithmetic lifting 
  generating a series of Kudla cycles. By  work of Yuan--Zhang--Zhang, Liu, and  Westerholt-Raum \cite{YZZ, Liu, BW}, 
  we have the following  unconditional result:

  \begin{itemize}
 \item  {\em The  $\sL$- lifting of generating series of Kudla's  cycles is   modular in  case of divisors or in   case  $F=\BQ$. }
  \end{itemize}

In the rest of this introduction, we give a more detailed outline of this work. 

\subsubsection*{Local cycles}
As a local setting, we will consider a flat and projective morphism $f: X\lra S$ where $S=\Spec R$ with $R$ a complete discrete valuation ring and $X$ is regular with an ample line bundle $L$. Then there are groups of algebraic cohomology cycles  $A^*(X_s), A_*(X_s)$ over the special fiber $i: X_s\lra X$.
These groups have an action by the  Lefschetz operator $\sL$ defined by the first Chern class of $L$. There is also a map connecting them:
$$ i^*i_*: A_{n+1-*} (X_s)\lra A^*(X_s).$$
We denote its image as  $A_\varphi^*(X_s)$, and its cokernel as 
$A_\psi^*(X_s)$. Thus there is an  exact sequence
$$0\lra A_\varphi^*(X_s)\lra A^*(X_s)\lra A_\psi ^*(X_s)\lra 0.$$
Our starting point is a set of extended  Grothendieck's standard conjectures \cite[Conjectures 1 and 2]{Gr}  for   $A_\varphi^*(X_s)$, and $A_\psi^*(X_s)$, Conjectures
\ref{conj-nd}, \ref{conj-lef}, \ref{conj-hod}.
Under Conjecture \ref{conj-lef} of Lefschetz type, one can prove in  Theorem \ref{thm-harac} that 
there are   unique splittings  of the above short exact sequences of $\sL$-modules:
$$A^*(X_s)=A_\varphi^*(X_s)\oplus \CA^*_\psi(X_s);$$
Inspired by  work of K\"unnemann for K\"ahler manifolds,  $\CA^*_\psi(X_s)$ is called the group of {\em harmonic forms}. Indeed, a  complex analogue of the  map $i^*i_*$ in arithmetic intersection theory of Gillet--Soul\'e is the following map:
$$\partial\bar\partial: \wt A^{p-1, p-1}(X_\BC):= A^{p-1, p-1}(X_\BC) /(\Im \partial+\Im\bar\partial)\lra A^{p, p}_{\closed}(X_\BC).$$
The decomposition according to Lefschetz operator coincides with Laplacian operator:
$$A^{p, p}_{\closed}(X_\BC)=\partial\bar\partial(\wt A^{p-1, p-1}(X_\BC))\oplus \CH^{p, p}(X_\BC)$$
where $\CH^{p, p}(X_\BC)$ is the space of harmonic forms of degree $(p, p)$.
See Theorem \ref{thm-arch} and Corollary \ref{cor-arch}.

Based on the work  of  Bloch--Gillet--Soul\'e, and K\"unnemann,
we will show that  Conjectures
\ref{conj-nd}, \ref{conj-lef}, \ref{conj-hod} hold when $X/S$ is strictly semistable so that all strata  satisfy   Grothendieck's  standard conjectures \cite[Conjectures 1, 2]{Gr}.
See Theorem \ref{thm-ssr}.

As the first  application, we use harmonic forms to define some canonical local height parings of algebraic cycles under  Conjecture \ref{conj-lef}. More precisely, let $\wh Z^*(X)$ be the space of cycles on $X$ modulo homologically trivial cycles supported on $X_s$. Then there are  maps
$$A_{n+1-*}(X_s)\overset {i_*}\lra \wh Z^*(X)\overset {\omega=i^*} \lra A^*(X_s)$$
where $\omega$ is called the {\em curvature map}. A cycle $z\in \wh Z^*(X)$ is called {\em admissible}, if the curvature is harmonic: $\omega (z)\in \CA^*_\psi(X_s)$. We let $\ol Z^*(X)$ denote the group of admissible cycles and call it the {\em Arakelov group of admissible  cycles}.
Then there is an exact sequence:
$$0\lra i_*A_{n+1-*}^\psi (X_s)\lra \ol Z^*(X)\lra Z^*(X_\eta)\lra 0,$$
where $A_{n+1-*}^\psi (X_s)$ is the kernel of $i^*i_*: A_{n+1-*}(X_s)\lra A^*(X_s)$.
If we further assume Conjecture \ref{conj-nd}, then the above sequence has an 
{\em Arakelov    lifting } $z\mapsto z^\Ara$ for $z\in Z^*(X_\eta)$ such that 
$z^\Ara-z^\zar=i_*g$ with $g\in A_{n+1-*}(X_s)$ perpendicular to $\CA_\psi^{n+1-*}(X_s)$.
Thus we get a well-defined {\em Arakelov  height pairing} for two disjoint cycles $z\in Z^i(X_\eta)$, $w\in Z^j(X_\eta)$ with $i+j=n+1$:
$$(z, w)_\Ara:=z^\Ara\cdot w^\Ara.$$
The analogue in K\"ahler manifold $(X_\BC, \omega)$ for an Arakelov lifting  $z^\Ara$ of a $z\in Z^*(X_\BC)$  is a pair $(z, g)$ with a current 
$g\in D^{p-1, p-1}/\Im \partial+\Im\partial$ such that 
$$\pp g=\delta _z-\omega_z,$$
where $\omega_z\in \CH^{p, p}(X_\BC)$ is a harmonic class representing $z$. 
See Gillet--Soul\'e \cite{GS91}.
The normalization means 
$$\int_{X_\BC}  gh=0, \qquad h\in \CH ^{n+1-p, n+1-p} (X_\BC).$$
The Arakelov  pairing of $z$ with a disjoint cycle $w$ with arithmetic complement degree is defined as
$$(z, w)_\Ara=\int _{X_\BC} g\delta _w.$$

As one byproduct, one has the following. 
Theorem  \ref{thm-bb}: {\em Assume  Grothendieck's  standard conjectures \cite[Conjectures 1, 2]{Gr}. Then 
a cycle $z$ on $Z^*(X_\eta)$ with trivial cohomology class in $H^{2*}(X_\bata)(*)$
will have a lifting $z^\BB$ with trivial cohomology class in $A^*(X_s)$. }

We will also study the following map of  cohomology  cycles,
$$\mu: H_{X_s}^*(X)\lra H^*(X_s), $$
and  denote its image and kernel  as $H_\varphi^*(X_s)$  and its cokernel as $H_\psi^*(X_s)$
to obtain an  exact sequence
$$0\lra H_\varphi^*(X_s)\lra H^*(X_s)\lra H_\psi^*(X_s)\lra 0.$$
Then we propose Conjecture \ref{conj-coh} of Lefschetz type for $H_\varphi^*(X_s)$ and  $H^*_\psi (X_s)$.
Under this conjecture, one has  the following statements:
\begin{enumerate}
\item Theorem \ref{thm-harcoh}: there is a    unique splitting  of above short exact sequence of $\sL$-modules:
$$H^*(X_s)=H_\varphi^*(X_s)\oplus \CH^*_\psi(X_s).$$
We call {\em $\CH^*_\psi(X_s)$ the space of  harmonic forms}.
\item Theorem \ref{thm-lef}: there is a   connection to the group of   invariant cycles given as follows:
$$H_\psi^*(X_s)\iso H^*(X_\bata)^{\Gal (\bata/\eta)}.$$
\end{enumerate}

For cohomology cycles,  we will show that our conjectures are equivalent to some conjectures about perverse sheaf cohomology, see Theorem \ref{thm-perv}. Based on the work of Beilinson--Bernstein--Deligne--Gabber,
we will show that our Conjecture \ref{conj-coh}  holds  when $R$ has  equal characteristics (Theorem \ref{thm-perv}, Corollary \ref{cor-bbdg}).

\subsubsection*{Global cycles}
As a global setting, we will consider a flat and projective morphism $f: X\lra S$ where $S=\Spec \CO_K$ with $K$ a number field and  $X$ is regular with an arithmetic ample line bundle $L$ (\cite{Zh}). Inside  Gillet--Soul\'e's arithmetic Chow group $\wh \Ch^*(X)$ with real coefficients, there is  an Arakelov Chow group 
$\ol\Ch^*(X)$ of admissible cycles with harmonic curvature everywhere.  This group fits in an exact sequence
$$0\lra B^*(X)\lra \ol \Ch^*(X)\lra \Ch^*(X)\lra 0$$
where $B^*(X)$ is the space of vertical cycles with trivial curvature.
This group has a 3-step filtration
$$F^i\ol\Ch^*(X)=\begin{cases} \ol\Ch^*(X), &\text{if $i\le 0$},\\
\ol\Ch^*(X)^0,&\text{if $i=1$},\\
B^*(X), &\text{if $i=2$}.
\end{cases}
$$ 
where $\ol\Ch^*(X)^0=\Ker (\ol \Ch^*(X)\lra H^{2*}(X_\bata)(*))$.
The associated graded quotients are 
$$G^i\ol\Ch^*(X)=\begin{cases}
A^*(X_\eta), &\text{if $i=0$,}\\
\Ch^*(X_\eta)^0, &\text{if $i=1$,}\\
B^*(X), &\text{if $i=2$,}
\end{cases}
$$
where $A^*(X_\eta)$ and $\Ch^*(X_\eta)^0$ are the image and the kernel respectively for the map $\Ch^*(X)\lra A^*(X_\eta)$. 
Assuming Gillet--Soul\'e's arithmetic standard conjecture and our local conjectures, 
 we will show  that there is a unique splitting of graded $\BR$-modules
$$\alpha: \bigoplus _{i=0}^2G^i\ol \Ch^*(X)\iso \ol \Ch^*(X)$$
such that $\alpha |G^1$ is $\sL$-linear, and $\alpha |G^0$ is $\sL$-linear  modulo $\alpha (G^2)$ and $\sLambda$-linear  modulo $\alpha (G^1)$,
see Theorem \ref{thm-ac-dec}.  We also show that  the Lefschetz operator $\alpha ^{-1} \sL\alpha $ 
is determined by an $\sL$-isomorphism $\beta: A^*(X_\eta)\lra B^{*+1}(X)$.  {\em The structure of the $\BR[\sL]$-module   $\ol \Ch^*(X)$ with a symmetric pairing depends only on the graded quotients and a mysterious  isomorphism $\beta$}. 
We give two applications of this splitting. 

The first one is  that Conjectures
\ref{conj-nd}, \ref{conj-lef}, \ref{conj-hod} imply some so called {\em $\sL$-lifting} for the projection $\ol\Ch^*(X)\lra \Ch^*(X_K)$, and thus a {\em $\sL$-
pairing} on $\Ch^*(X_K)$. We will define unconditionally the $\sL$- liftings for divisors $\Ch^1(X_K)$ and  $0$-cycles $\Ch^n(X_K)$ in 
Corollary \ref{cor-lef-pic}, \ref{cor-lef-0c}. As a consequences, we will have some canonical arithmetic liftings of the 
generating series of Kudla's divisors and $0$-cycles.

The second one is that  Conjectures
\ref{conj-nd}, \ref{conj-lef}, \ref{conj-hod} imply the equivalence (Theorem \ref{thm-gsbb})  between  the standard conjecture by Gillet--Soul\'e for arithmetic Chow groups $\wh \Ch^*(X)$ and the standard conjecture of Beilinson for homologically trivial 
Chow groups $\Ch^*(X)^0$.

In  function field case, we will also define the group $\ol H^*(X)$ of admissible cohomological cycles, and prove a decomposition Theorem \ref{thm-coh-dec} for cohomology group $H^*(X)$ :
 $$\alpha: \bigoplus _{i=0}^2G^i\ol H^*(X)\iso \ol H^*(X),$$
 such that $\alpha |G^1$ is $\sL$-linear, and $\alpha |G^0$ is $\sL$-linear  modulo $\alpha (G^2)$ and $\sLambda$-linear  modulo $\alpha (G^1)$. For an open embedding $j: U\to S$ such that $f$ is smooth, the above decomposition induces a
 decomposition of $\BQ_\ell$-vector spaces:
 $$\alpha: \bigoplus_{i=0}^2H^i(S, j_*R^{*-i}f_{U*}\BQ_\ell)\iso H^*(S, j_{!*}Rf_{U*}\BQ_\ell).$$
 Notice that our decomposition  is usually different than the one  induced from the canonical splitting  of the complex
 $Rf_{U*}\BQ_\ell$:
 $$Rf_{U*}\BQ_\ell=\bigoplus _{m\in \BZ} R^{m}f_{U*}\BQ_\ell [-m].$$

\subsection*{Acknowlegement}
We are grateful to  Klaus K\"unnemann and Weizhe Zheng for their help in preparing this paper,
especially to Klaus K\"unnemann for suggesting we relate our work to the previous work of Bloch--Gillet--Soul\'e and himself, and to Weizhe Zheng for helping us connect our conjecture to the weight monodromy conjecture and the decomposition theorems in perverse sheaves.  
We want to thank Yifeng Liu, Congling Qiu, Xinyi Yuan, and the referees for their valuable comments on an early draft of this paper. 
 This work is partially supported by a grant from the National Science Foundation.

\section{Local cycles}\label{sec-loc}

In this section, we first propose  some  conjectures (\ref{conj-nd}, \ref{conj-coh}, \ref{conj-lef},
\ref {conj-hod}) as extensions of  Grothendieck's standard conjectures  \cite[Conjectures 1, 2]{Gr} for smooth and projective varieties. 
Then we use these new conjectures to construct some canonical splittings of various groups of cycles (\ref{thm-harcoh}, \ref{thm-harac}).
By comparison with a result of K\"unnemann on complex varieties \cite{Ku95}, our splitting provides a non-archimedean analog of {\em harmonic forms} without using Laplacian operators. Our treatment applies to both cohomology cycles and algebraic cohomology cycles. 

For cohomology cycles,  we will show that our conjectures are equivalent to some conjectures about perverse sheaf cohomology, see Theorem \ref{thm-perv}. In particular, our conjectures of Lefschetz type hold when the base has equal characteristics by Beilinson--Bernstein--Deligne--Gabber \cite{BBDG} (\ref{cor-bbdg}).

For algebraic cycles on strictly semistable fibers, we will show that our conjectures are consequences of  Grothendieck's standard conjecture
by work of  Bloch--Gillet--Soul\'e \cite{BGS97} and K\"unnemann \cite{Ku98a} (\ref{thm-ssr}). Applying de Jong's alterations, we can prove that 
the  Beilinson--Bloch liftings for homologically trivial cycles exist under  Grothendieck's 
standard conjecture \cite[Conjectures 1, 2]{Gr} (\ref{thm-bb}).

\subsection{Cycles on a degenerate fiber}\label{ss-cc}
Let $S=\Spec R$  with $R$  a complete discrete valuation ring, with the generic point $\eta=\Spec K$ and the closed point $s=\Spec k$ with $k$ separately  closed. 
Let $f: X\lra S$ be a proper and flat morphism from a regular scheme with the special fiber $i: X_s\lra X$ and the generic fiber $j: X_\eta\lra X$.
Since $X$ is defined by finitely many equations, by approximation, we may assume that $X=X_0\otimes _{S_0}S$ , where $S_0=\Spec R_0$ with $R_0$ a complete discrete valuation subring of $R$ such that 
\begin{enumerate}
\item the residue field $k_0$ of $R_0$ is of finite type over its prime field;
\item $R$ is the completion of the maximal unramified extension of $R_0$.
\end{enumerate}

\subsubsection*{Algebraic cycles}

For $Y=X, X_\eta, X_s$, there are the Chow homology groups $\Ch_*(Y)$ with rational coefficients defined as the quotients of the free groups of integral subschemes modulo rational equivalence, and the Chow cohomology groups $\Ch^*(Y):=\Ch^*(Y\overset{id}\lra Y)$ defined as bivariant operations on the Chow homology groups on $Y$-schemes  as in  Fulton \cite[Definition 17.3]{Fu}.
Then $\Ch^*(Y)$ has a commutative ring structure which acts on $\Ch_*(Y)$ by  cap product:
$$\cap: \Ch^i(Y)\otimes \Ch_j(Y)\lra \Ch_{j-i}(Y).$$

If $Y=X_s, X_\eta$, then $Y$ is proper over a field.  Composing with the degree map $\deg: \Ch_0(Y)\lra \BQ$, there is  a pairing 
$$(\cdot, \cdot)_Y: \Ch^i(Y)\otimes \Ch_i(Y)\lra \BQ.$$

If $Y=X$ or $X_\eta$,  $Y$ is regular.  The cap  product with $[Y]$  defines  an isomorphism 
$$\cap [Y]: \Ch^i(Y)\iso \Ch_{\dim Y-i}(Y).$$ 

Let $n=\dim X_\eta$ be the relative dimension of $f$. Then there is  a composition of some morphisms of Chow groups:
$$
i^*i_*: \quad  \Ch_{n+1-i} (X_s)\overset {i_*} \lra \Ch_{n+1-i} (X)\simeq \Ch^i(X)\overset {i^*}\lra \Ch^i(X_s).
$$
One main goal of this paper is to study this map after taking their classes in an    $\ell$-adic cohomology for a prime  $\ell$ invertible over $S$.

\subsubsection*{Cohomology cycles}

We start with  the following localization sequence of the $\ell$-adic cohomology  and homology groups with $\BQ_\ell$-coefficients:
$$
\cdots \lra  H^{i-1}( X_{\eta})\lra   H^i_{X_s} (X)\overset {\mu }\lra H^i(X_s)\lra H^i(X_\eta)\lra \cdots.
$$
We define the groups  of {\em vanising}  and {\em nearby} cycles: 
$$H_\varphi ^i(X_s):=\Im (\mu ), \qquad H_\psi^i(X_s):=\Coker (\mu ). $$
Then there is  an exact sequence 
\begin{equation}\label{seq-coh}
0\lra H_\varphi ^i(X_s)\lra H^i(X_s)\lra H_\psi ^i (X_s)\lra 0.
\end{equation}

There is a perfect 
pairing between $H^*(X_s)$ and $H^*_{X_s}(X)$ by the composition of the following maps:
$$(\cdot, \cdot)_X: \quad H^i(X_s)\otimes H^{2n+2-i} _{X_s} (X)(n+1)\lra H^{2n+2}_{X_s} (X)(n+1)\overset\deg \lra \BQ_\ell.$$
Thus we may define the homology group of $X_s$ by
$$H_*(X)=H^{2n+2-i} _{X_s} (X)(n+1).$$
This pairing induces  perfect pairings on $H^*_\varphi$ and $H_\psi^*$ as follows:
$$(\cdot, \cdot)_\varphi:  H^i_\varphi ( X_s)\otimes H^{2n+2-i}_\varphi( X_s)(n+1)\lra \BQ_\ell, \qquad 
(\mu \alpha, \mu \beta)_\varphi=(\mu \alpha, \beta)_X, $$
$$ (\cdot, \cdot)_\psi: H^i_\psi (X_s)\otimes H_\psi ^{2n-i} (X_s)(n)\lra \BQ_\ell,\qquad 
(\alpha, \beta)_\psi=(\bar\alpha, \bar\beta \cap [X_s])_{X}$$
 where  $\bar\alpha\in H^i(X_s)$,  $\bar\beta\in H^{2n-i}(X_s)$ are liftings of $\alpha$ and $\beta$.

\subsubsection*{Weights}
Notice that  the cohomology groups $H_{X_s}^*(X)$, $H^*(X_s)=H^*(X)$,  $H^*(X_\eta)$, and $H^*(X_\bata)$ (where $\bata=\Spec \bar K$) have 
canonical weight filtrations with respect to $X_s$.  For the last three groups, we refer to \cite[\S2.2]{It05a}. For the first group, we use its duality to the second group. 
In the rest of this paper, we will not use the above precise constructions  of weights except two formulae (\ref{eq-hhh1}) and 
(\ref{eq-hhh2}). More precisely, assume that  $X_s $ is strictly  semistable  in the sense of de Jong \cite[\S1.26]{dJ} that 
in the  decomposition $X_s=\bigcup_{i=1}^r Y_i$ with $Y_i$ irreducible, for any non-empty subset $I\subset \{1, \cdots, r\}$, the strata $Y_I:=\bigcap_{i\in I}Y_i$ is smooth of dimension $n+1-|I|$. Thus $\BQ_\ell$ can be represented by the C\v ech complex $(C^*, d)$ with  $C^i:=\bigoplus_{|I|=i+1} \BQ_{\ell, Y_I}$ on $X_s$. And there is  a spectral sequence
$$E_1^{p, q}:=H^q(X_s, C^p))=\bigoplus_{|I|=p+1}H^q(Y_I) \imp H^{p+q}(X_s).$$
The canonical weight $q$ on $H^q(Y_I)$ induces a weight filtration on $X_s$. More precisely, by weight consideration, 
this spectral sequence degenerates at $E_2^{p, q}=H^p(H^q(X_s, C^*))$ which has pure weight $q$. Thus there is a unique weight filtration on $H^*(X)$ such that
$$\gr^W_pH^{p+q}(X)=H^q(H^{p}(X_s, C^*)).$$
For example, when  $q=0$, there are   the following formulae for the highest weight piece of $H^p(X)$:
\begin{equation}\label{eq-hhh1}
0\lra \gr^W_pH^p(X)\lra \bigoplus_i  H^p(Y_i)\lra \bigoplus_{i<j} H^p(Y_{ij}),
\end{equation}
Taking duality, we also have the lowest weight piece for $H_{X_s}^p(X)$:
\begin{equation}
\label{eq-hhh2}
\bigoplus_{i<j}  H^{p-4}(Y_{ij})(-2)\lra \bigoplus _i H^{p-2}(Y_i)(-1)\lra \gr^W_pH^p_{X_s}(X)\lra 0.
\end{equation}

\subsubsection*{Algebraic cohomology cycles}
Using filtration by weights $W_*$, there is  the class maps
\begin{equation}\label{seq-class}
\Ch_{n+1-*}(X_s)\lra \gr^W_{2*}H^{2*}_{X_s}(X)(*), \qquad \Ch^*(X_s)\lra \gr^W_{2*}H^{2*} (X_s)(*).
\end{equation}
For construction of these cycle maps, one can use  an  argument in Bloch--Gillet--Soul\'e \cite[Appendix]{BGS95} with  ``envelopes" replaced by de Jong's alternation
\cite{dJ}. The argument in \cite{BGS95} works with this replacement because of  $\BQ_\ell$-coefficients. In particular, when $X$ has strict semistable reduction, there is  the following analogue formula for Chow groups:
\begin{equation}\label{eq-ccc1}
0\lra \Ch^p(X_s)\lra \bigoplus_i  \Ch^p(Y_i)\lra \bigoplus_{i<j} \Ch^p(Y_{ij}),
\end{equation}
\begin{equation}
\label{eq-ccc2}
\bigoplus_{i<j}  \Ch_p(Y_{ij})\lra \bigoplus _i \Ch_p(Y_i)\lra \Ch_p(X_s)\lra 0.
\end{equation}
The class maps (\ref{seq-class}) induce maps from these exact sequences to the exact sequences (\ref{eq-hhh1})
and (\ref{eq-hhh2}).

 Let $A_*(X_s)$ and $A^*(X_s)$ be the images of  the maps (\ref{seq-class}). Then there is  a connection map 
$$i^*i_*: A_{n+1-*}(X_s)\lra A^*(X_s).$$
Again, we define the groups of vanishing and nearby cycles by
$$A_\varphi ^*(X_s):=\Im (i^*i_*), \qquad A_\psi^*(X_s)=\Coker (i^*i_*).$$
Then there is  an exact sequence
\begin{equation}
\label{seq-ch}0\lra A_\varphi^*(X_s)\lra A^*(X_s)\lra A_\psi ^*(X_s)\lra 0.
\end{equation}

The intersection pairing between $\Ch^i(X_s)$ and $\Ch_i(X_s)$ induces a pairing  
$$A^i(X_s)\otimes A_i(X_s)\lra \BQ$$
which is compatible with the pairing of cohomology groups. Moreover, the same process in cohomology groups defines 
the pairings on  $A_\varphi^*(X_s)$ and $A_\psi^*(X_s)$.

Thus there is  a morphism between above two sequences
$$\xymatrix{0\ar[r]&A_\varphi ^i(X_s)\ar[r]\ar@{^{(}->}[d]&A^i(X_s)\ar[r]\ar@{^{(}->}[d]&A_\psi ^i(X_s)\ar[r]\ar[d]&0\\
0\ar[r]&\gr^W_{2i}H_\varphi ^{2i}(X_s)(i)\ar[r]&\gr^W_{2i}H^{2i}(X_s)(i)\ar[r]&\gr^W_{2i}H_\psi ^{2i}(X_s)(i)\ar[r]&0.
}
$$
We will fix an ample line bundle $L$ over $X$. Let $\sL$ be the operator over each group in the above diagram defined by the cup product with the first Chern class $c_1(L)\in H^2(X_s)(1)$.

\subsubsection*{Standard conjectures}
We would like to propose the following analog of Grothendieck's standard conjectures \cite[Conjectures 1, 2]{Gr}:

\begin{conj}\label{conj-coh}
Let $n=\dim X_\eta$. 
\begin{enumerate}
\item For $i\le n$, there is  an isomorphism 
$$\sL^i: H^{n-i}_\psi (X_s)\iso H^{n+i} _\psi (X_s)(i).$$
\item For $i\le n+1$, there is  an isomorphism 
$$\sL^i : H_\varphi^{n+1-i} (X_s)\iso H_\varphi^{n+1+i}(X_s)(i).$$
\end{enumerate}
\end{conj}

\begin{conj}\label{conj-nd} The intersection pairing on algebraic cohomology classes $$A^*(X_s)\times A_*(X_s)\lra \BQ$$
is perfect.
\end{conj}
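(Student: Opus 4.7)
The plan is to deduce Conjecture \ref{conj-nd} from Grothendieck's original standard conjecture D applied to the smooth projective strata of a strictly semistable reduction, paralleling the strategy that the author invokes for Theorem \ref{thm-ssr}. First, by de Jong's alteration \cite{dJ} one obtains a simplicial hypercovering $X_\bullet\to X$ with each $X_n$ regular and having strictly semistable special fiber; since the $\ell$-adic cohomology, the weight filtration, the cycle class maps of \eqref{seq-class}, and the intersection pairing are all simplicially functorial (and we work with $\BQ_\ell$-coefficients so that the Bloch--Gillet--Soul\'e construction of cycle classes, as modified in \S\ref{ss-cc}, goes through), the conjecture reduces to the case where $X/S$ itself is strictly semistable.

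In the strictly semistable case, write $X_s=\bigcup_{i=1}^r Y_i$ with smooth projective strata $Y_I=\bigcap_{i\in I}Y_i$ of dimension $n+1-|I|$. Then the weight-graded pieces $\mathrm{gr}^W H^*(X_s)$ and $\mathrm{gr}^W H^*_{X_s}(X)$ are computed by the Deligne--Rapoport--Zink spectral sequences, with $E_1$-terms $\bigoplus_{|I|=k}H^*(Y_I)$ (up to shifts and Tate twists) and differentials given by alternating sums of Gysin pushforwards and restrictions among the strata. The cycle class maps in \eqref{seq-class} are compatible with these spectral sequences, so that $A^*(X_s)$ and $A_*(X_s)$ arise as subquotients of the corresponding algebraic subspaces $A^*(Y_I)$ and $A_*(Y_I)$ on the strata. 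The intersection pairing between $H^*(X_s)$ and $H^*_{X_s}(X)$ is identified, through Poincar\'e--Lefschetz duality for the pair $(X,X_s)$, with a signed sum of the Poincar\'e duality pairings on the $Y_I$.

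The final step is to invoke Grothendieck's standard conjecture D on each $Y_I$, which gives perfectness of $A^*(Y_I)\times A_*(Y_I)\to\BQ$, and then to assemble these perfect pairings via the spectral sequence to obtain perfectness of the pairing on $A^*(X_s)\times A_*(X_s)$. The main obstacle will be the compatibility between the spectral sequence differentials and the duality pairings at the level of algebraic classes: one needs an element of $A^*(X_s)$ pairing trivially with all of $A_*(X_s)$ to vanish in every $E_\infty^{p,q}$ piece, which requires showing that the subquotient presentation of $A^*(X_s)$ is self-dual to that of $A_*(X_s)$ under the stratum-wise pairings. This is the essential homological bookkeeping already carried out, for algebraic cycles on strictly semistable fibers, in Bloch--Gillet--Soul\'e \cite{BGS97} and K\"unnemann \cite{Ku98a}, which the author invokes to establish Theorem \ref{thm-ssr}; once this compatibility is in hand, Conjecture \ref{conj-nd} follows purely formally.
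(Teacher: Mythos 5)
The statement you set out to prove is stated in the paper as a \emph{conjecture}, and the paper gives no proof of it in general. What the paper proves is Theorem~\ref{thm-ssr}, which establishes Conjectures~\ref{conj-nd}, \ref{conj-lef}, \ref{conj-hod} only under the twin hypotheses that $X/S$ is strictly semistable and that Grothendieck's standard conjectures hold on every smooth stratum $Y_I$. Your sketch of the strictly semistable case has the right flavor but does not match the paper's mechanism: rather than tracing the Rapoport--Zink spectral sequence directly, the paper passes to the Bloch--Gillet--Soul\'e groups $A^*(X_s)_{BGS}$, $A_*(X_s)_{BGS}$ (kernel and cokernel of restriction and Gysin maps between sums of $A^*(Y_I)$), for which the pairing is perfect essentially by construction; Bloch--Gillet--Soul\'e and K\"unnemann give the standard conjectures for $A^*_\phi(X_s)_{BGS}$ and $A^*_\psi(X_s)_{BGS}$, and since the natural surjections onto $A^*_\phi(X_s)$, $A^*_\psi(X_s)$ respect the (non-degenerate) pairings, they are forced to be bijections, from which perfectness on $A^*(X_s)\times A_*(X_s)$ follows.

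The genuine gap in your proposal is the very first step: the claimed reduction from an arbitrary regular $X/S$ to the strictly semistable case via de Jong's alterations. The hypercovering $X_\bullet\to X$ from \cite{De74} and \cite{dJ} is invoked in \S\ref{ss-cc} only to \emph{define} the weight filtration and the cycle class maps of \eqref{seq-class}; it does not transport perfectness of pairings. Once $A^*(X_s)$ is defined, it sits as a subquotient inside a cohomological descent spectral sequence built from the $A^*(X_{n,s})$, and perfectness at the $E_1$-level does not, without a genuinely new argument, descend to the abutment: the differentials can produce kernels and cokernels that are not matched under the stratum-wise dualities, and for an alteration $\pi\colon X'\to X$ of degree $>1$ the operator $\pi_{s*}\pi_s^*$ on the special fiber need not be multiplication by a scalar, so nondegeneracy does not pass through $\pi^*$ either. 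The paper is careful about exactly this point: where it \emph{does} use de Jong's theorem to say something about a non-semistable $X$ (Theorem~\ref{thm-bb}), it obtains only a one-directional existence statement --- Beilinson--Bloch liftings exist --- by pushing forward from the altered model, not any bijectivity or perfectness statement. It is precisely because the semistable-to-general reduction fails for perfectness that Conjecture~\ref{conj-nd} is posed as a conjecture and not absorbed into Theorem~\ref{thm-ssr}.
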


\begin{conj}
\label{conj-lef}
Let $n=\dim X_s$. 
\begin{enumerate}
\item For $i\le n/2$,  there is  an isomorphism 
$$\sL ^{n-2i}: A_\psi ^i(X_s)\iso A_\psi ^{n-i}(X_s) .$$
\item For $i\le {(n+1)/2}$,  there is  an isomorphism 
$$\sL ^{n+1-2i}: A_\varphi ^i(X_s)\iso A_\varphi ^{n+1-i}(X_s).$$
\end{enumerate}
\end{conj}

 \begin{conj}
\label{conj-hod}
Let $n=\dim X_s$.
\begin{enumerate}
\item For $i\le n/2$,   $0\ne x\in \Ker (\sL ^{n+1-i} |A_\psi ^i(X_s) )$, we have 
$$(-1)^i (x, \sL^{n-i}x)_\psi>0.$$
\item For $i\le {(n+1)/2}$,  $0\ne x\in \Ker (\sL^{n+2-i}|A_\varphi^i(X_s)$, we have 
$$(-1)^i (x, \sL ^{n+1-i}x)_\varphi> 0.$$
\end{enumerate}
\end{conj}

\begin{remark} We want to give some connections between the above conjectures and Grothendieck's standard conjectures \cite[Conjectures 1, 2]{Gr}.
\begin{enumerate}
\item  
If $X/S$ is smooth, then $A_\varphi^*(X_s)=H_\varphi ^*(X_s)=0$ and 
$$A_\psi^*(X_s)=A^*(X_s)=A_{n+1-*} (X_s), \qquad H_\psi^*(X_s)=H^*(X_s)=H_{X_s}^*(X_s).$$
Thus the above conjectures are the  Grothendieck conjectures for $X_s$.
\item Conversely if $X_s$ is strictly semistable, based on  work of Bloch--Gillet--Soul\'e \cite{BGS97} and K\"unnemann \cite{Ku98a}, we will show that 
 the Grothendieck standard conjectures \cite[Conjectures 1, 2]{Gr} for strata implies Conjectures \ref{conj-lef}, \ref{conj-hod}.
 See Theorem \ref{thm-ssr}. It will be interesting 	to extend these results to general situation.
 \end{enumerate}
\end{remark}

\begin{remark} Instead of working on $A^*(X_s)$ we can also work on $A_*(X_s)$ by 
defining 
$$A_*^\psi(X_s)=\ker (i^*i_*), \qquad A_*^\varphi (X_s)=\Im (i_*i^*)=A^{n+1-*}_\varphi (X_s)$$
to obtain an exact sequence
$$0\lra A_*^\psi (X_s)\lra A_*(X_s)\lra A_*^\varphi (X_s)\lra 0.$$
Then there are also  standard conjectures for $A_*(X_s)$ which is equivalent to those for $A^*(X_s)$.
Similar equivalence holds for $H^*(X_s)$ and $H_{X_s}^*(X_s)$.
\end{remark}

\subsubsection*{Harmonic forms}

Apply Proposition \ref{prop-lef-dec} to $\bigoplus _iH_?^*(X_s)(i)$, there is  the following splittings:

\begin{thm}\label{thm-harcoh}
Assume conjecture \ref{conj-coh}, then there is  a unique decomposition of $\BQ_\ell$-modules
$$H^*(X_s)=H_\varphi^*(X_s)\oplus \CH_\psi ^*(X_s),$$
so that the  induced morphism
$$\bigoplus _iH^*(X_s)(i)=\bigoplus_i H_\varphi^*(X_s)(i)\oplus \bigoplus _i\CH_\psi ^*(X_s)(i),$$
is  $\sL$-linear.
\end{thm}

The space $\CH_\psi^*(X_s)$ is called the space of {\em harmonic forms.}

\begin{thm}\label{thm-harac}
 Assume Conjecture   \ref{conj-lef}.
Then there is  a unique decomposition of $\sL$-modules
$$A^*(X_s)=A_\varphi^*(X_s)\oplus \CA_\psi^*(X_s).$$
\end{thm}

The space $\CA_\psi^*(X)$ is called the space of {\em harmonic forms.}

\begin{remark} These two decompositions can be considered as non-archimedean analogs of harmonic form decompositions, see Corollary \ref{cor-arch}.
\end{remark}

\subsection{Archimedean analogue}
We want to write an analogue sequence of (\ref{seq-ch}) when $K=\BC$ with valuation $|\cdot |$ 
using Gillet--Soul\'e's theory  \cite[ \S3.3.4]{GS91} of arithmetic Chow groups  over ``arithmetic ring $(\BC, |\cdot |)$".
Let $X$ be a proper and smooth complex variety. Then according to Bloch--Gillet--Soul\'e \cite{BGS95}, 
the analogue of ``$H^i_{X_s}(X)\lra H^i(X_s)$" in archimedean case is given by
$$\mu :=\pp: \wt A^{i-1, i-1} (X(\BC)) \lra A^{i, i}_{\closed}, \qquad  \wt A^{i-1, i-1} (X(\BC)):=\frac {A^{i-1.i-1}}{\Im \partial+\Im \bar  \partial }.$$
In this case, the analogue of sequence (\ref{seq-coh}) becomes
$$0\lra \Im(\partial \bar \partial)^{i, i}\lra A^{i, i}_{\closed}(X_\BC) \lra H^{i, i}(X(\BC) )\lra 0.$$
Thus we write this sequence as
$$0\lra H_\varphi^*(X_\BC)\lra H^*(X_\BC)\lra H_\psi^*(X_\BC)\lra 0.$$

The standard conjectures for $H^*_\psi (X_\BC)$ are the classical hard Lefschetz and Hodge index theorem in Hodge theory. For $H_\varphi^*(X_\BC):=\Im (\partial\bar\partial)^{i, i}$
we have the following:
\begin{thm}[K\"unnemann]
\label{thm-arch}
For $i\le (n+1)/2$, there is  an isomorphism 
$$\sL ^{n+1-2i}: H_\varphi^i(X_\BC) \iso H_\varphi ^{n+1-i}(X_\BC).$$
Moreover for $0\ne \alpha \in H_\varphi ^i(X_\BC)$, $\sL ^{n+2-2i} \alpha =0$,
then
$$(-1)^i(\alpha,\sL ^{n+1-2i} \alpha)>0.$$
\end{thm}
\begin{proof}
The first part is proved in \cite[Lemma 10.4]{Ku94}. The second part is proved in \cite[Theorem 1.2]{Ku95}.
\end{proof}

By Proposition \ref{prop-lef-dec}, there is  the following:

\begin{cor}\label{cor-arch}
There is a unique decomposition into $\sL$-modules:
$$A^{*, *}_{\closed} =\partial\bar\partial (A^{*-1, *-1})\oplus \CH^{*, *}(X).$$
\end{cor}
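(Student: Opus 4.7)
The plan is to deduce the splitting by applying Proposition \ref{prop-lef-dec} directly to the tautological short exact sequence
$$0 \lra \partial\bar\partial A^{*-1,*-1}(X) \lra A^{*,*}_{\closed}(X) \lra H^{*,*}(X_\BC) \lra 0$$
in the category $\BM$ of graded $\BR$-vector spaces with a degree $1$ operator $\sL$, where $\sL$ acts by $\omega\wedge\cdot$ for the fixed K\"ahler form.

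First I would identify the two outer terms as Lefschetz modules with the correct centers. The quotient $W^* := H^{*,*}(X_\BC)$ satisfies hard Lefschetz $\sL^{n-2i}\colon H^{i,i}\iso H^{n-i,n-i}$ by the classical theorem for compact K\"ahler manifolds, so it is Lefschetz with center $n/2$. The subspace $U^* := \partial\bar\partial A^{*-1,*-1}(X) = H_\phi^*(X_\BC)$ satisfies hard Lefschetz $\sL^{n+1-2i}\colon H_\phi^i\iso H_\phi^{n+1-i}$ by K\"unnemann's Theorem \ref{thm-arch}, so it is Lefschetz with center $(n+1)/2$. With the matching $m = n+1$ (and the base integer $n$ the same), both hypotheses of Proposition \ref{prop-lef-dec} apply: part (2) gives $\Ext^1_\BM(W,U)=0$, producing a splitting $\sigma\colon W^*\lra A^{*,*}_{\closed}(X)$, and the image of $\sigma$ is the sought complement $\CH^{*,*}(X)$; part (1) gives $\Hom_\BM(W,U)=0$, which forces $\sigma$ to be unique.

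The main point worth verifying is that $A^{*,*}_{\closed}(X)$ is itself infinite-dimensional and is \emph{not} a Lefschetz module, so one must check that the proof of Proposition \ref{prop-lef-dec}(2) uses only the Lefschetz structure on the two end terms. Inspecting that proof, the canonical lift $V^i_0 \iso W^i_0$ is obtained from the snake lemma applied to the power $\sL^{n+1-2i}$, which is an isomorphism on $U^i$ by hypothesis; exactness of the middle term is all that is needed. Once this is confirmed, no further computation is required, and the corollary follows.
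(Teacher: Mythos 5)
Your proof is correct and matches the paper's one-line argument, which simply cites Proposition \ref{prop-lef-dec} applied to the same short exact sequence, with the end-term Lefschetz structures supplied by classical Hodge theory (for $H^{*,*}(X_\BC)$, center $n/2$) and by Theorem \ref{thm-arch} (for $\Im\partial\bar\partial$, center $(n+1)/2$). Your extra observation that the category $\BM$ permits infinite-dimensional graded pieces and that the proof of Proposition \ref{prop-lef-dec}(2) uses the Lefschetz hypothesis only on the two outer terms is a valid and worthwhile point, since $A^{*,*}_\closed$ is not itself a Lefschetz module.
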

This decomposition is nothing but harmonic decomposition using the Laplacian operator.
Thus $\CH^*_\psi (X):=\CH^{*, *}(X)$ is the space of harmonic forms. To be consistent with the notation in the non-archimedean situation, we still denote this decomposition  as
$$H^*(X_\BC)=H_\varphi^*(X_\BC)\oplus \CH_\psi^*(X_\BC).$$

\subsection{Invariant cycles}

Now we want to connect the hard Lefschetz for $H_\psi^*$ to a conjecture about invariant cycles, which itself is a consequence of Deligne's weight monodromy conjecture \cite{De70}. 
 
\begin{thm}\label{thm-lef} Let $\bata=\Spec \bar K$ be the geometric point of $S$ with Galois group $I=\Gal (\bar K/K)$.
Then the following four statements are equivalent:
\begin{enumerate}
\item 
Conjecture \ref{conj-coh} of Lefschetz type  for $H_\psi^*(X_s)$;
\item The bijectivity  of the following composition of maps:
$$H_\psi^*(X_s)\lra H^*(X_\eta )\lra H^*(X_\bata)^I;$$
\item The surjectivity of the map to  invariant cycles: 
$H^*(X_s)\surj H^*(X_\bata)^{I}$;
\item For each $i$, $W_i H^i(X_\bata)^I=H^i(X_\bata)^I$.
\end{enumerate}
\end{thm}

\begin{proof}
 Consider the long exact sequence:
\begin{equation}\label{long-coh}
\cdots \lra H^{i-1}( X_{\eta})\lra  H_{X_s}^i(X)\overset {\mu }\lra H^i(X_s)\lra H^i(X_\eta)\lra \cdots.
\end{equation}
This sequence is self-dual with respect to the pairing:
$$H_{X_s}^i(X)\otimes H^{2n+2-i}(X_s)\lra \BQ_\ell (-1-n).$$
Notice that $H^i(X_s)$ has weight $\le i$ and $H_{X_s}^i(X)$ has weight $\ge i$, thus there is  an exact sequence: 
$$
0 \lra H^{i-1}(X_\eta )/W_{i-1}H^{i-1}(X_\eta)\lra  H^i_{X_s}(X)\overset {\mu  }\lra  H^i(X_s)\lra W_i H^i(X_\eta)\lra 0,
$$
which is self-dual with respect to $(i \longleftrightarrow 2n+2-i)$.
Thus there are isomorphisms:
$$W_iH^i(X_\eta)\iso H_\psi^i(X), \qquad H^i(X_\eta)/W_iH^i(X_\eta)\iso H_\psi^{2n+1-i}(X_s)^\vee (-n-1).$$
Combining these two isomorphisms, we get an exact sequence of $\BQ_\ell[\sL]$-modules:
\begin{equation}\label{short-psi}
0\lra H_\psi^*(X_s)\lra H^*(X_\eta)\lra  H_\psi^{2n+1-*} (X_s)^\vee (-n-1)\lra 0.
\end{equation}

As the operator $\sL$ is an even degree operator,  we can decompose this sequence into two sequences 
\begin{equation}\label{short-psi-e}
0\lra H_\psi^{2*}(X_s)\lra H^{2*}(X_\eta)\lra H_\psi^{2n+1-2*}(X_s)^\vee (-n-1)\lra 0,
\end{equation}
\begin{equation}
\label{short-psi-o}
0\lra  H_\psi^{2*+1}(X_s)\lra H^{2*+1}(X_\eta)\lra H_\psi^{2n-2*}(X_s)^\vee (-n-1)\lra 0.
\end{equation}

Now we want to consider  the spectral sequence 
$$E_2^{p, q}:=H^p(I, H^q(X_\bata))\Rightarrow H^{p+q}(X_\eta).$$
Since the action of $I$ on $H^*(X_\bata)$ restricting to an open subgroup $I_0$ factors through the tame quotient $\BZ_\ell (1)$, this sequence degenerates. Thus there is  
 an exact sequence
$$0\lra H^1(I, H^{*-1}(X_\bata))\lra H^*(X_\eta)\lra H^0(I, H^*(X_\bata))\lra 0.$$
Using following  identities 
$$H^1(I, H^{*-1}(X_\bata))=H^{*-1}(X_\bata)_I(-1), \qquad H^0(I, H^*(X_\bata))=H^*(X_\bata)^I, $$
 we obtain  an exact sequence
\begin{equation}\label{short-gal}
0\lra H^{*-1}(X_\bata)_I(-1)\lra H^*(X_\eta)\lra H^*(X_\bata)^I\lra 0.
\end{equation}
We can decompose it into even and odd degrees as well:
$$0\lra H^{2*-1}(X_\bata)_I(-1)\lra H^{2*}(X_\eta)\lra H^{2*}(X_\bata)^I\lra 0,$$
$$0\lra H^{2*}(X_\bata)_I(-1)\lra H^{2*+1}(X_\eta)\lra H^{2*+1}(X_\bata)^I\lra 0.$$

By Deligne \cite{De80}, $H^*(X_\bata)$ satisfies the hard Lefschetz. 
More precisely,  four end terms are Lefschetz modules with different centers:
$$H^{2*-1}(X_\bata)_I(-1): \quad (n+1)/2, \qquad H^{2*}(X_\bata)^I: \quad n/2,$$
$$H^{2*}(X_\bata)_I(-1): \quad n/2, \qquad H^{2*+1}(X_\bata)^I: (n-1)/2.$$

Now  applying Proposition \ref{prop-lef-dec} part 2 to the above two sequences, we get  unique splittings, 
\begin{equation}
\label{short-gal-e}
H^{2*}(X_\eta)\simeq H^{2*-1}(X_\bata)_I(-1)\oplus H^{2*}(X_\bata)^I.
\end{equation}
\begin{equation}\label{short-gal-o}
H^{2*+1}(X_\eta)\simeq H^{2*}(X_\bata)_I(-1)\oplus H^{2*+1}(X_\bata)^I.
\end{equation}
From  (\ref{short-gal-e}) and   (\ref{short-psi-e}), we get morphisms of graded modules 
$$\alpha\in \Hom (H^{2*}(X_\bata)^I, H^{2n+1-2*}_\psi (X_s)^\vee (-1-n)), \qquad \beta \in \Hom (H^{2*}_\psi (X_s), H^{2*-1}(X_\bata)_I(-1)).$$

Now we want to prove that part 1 implies part 2.
Assume $H_\psi^*(X)$ satisfies the hard Lefschetz \ref{conj-coh}, then four end terms are all Lefschetz modules with various different centers:
$$H_\psi^{2*}(X_s):  \quad n/2, \qquad H_\psi^{2n+1-2*}(X_s)^\vee: \quad (n+1)/2 ,$$
$$H_\psi^{2*+1}(X_s):\quad (n-1)/2, \qquad  H_\psi^{2n-2*}(X_s)^\vee:  \quad n/2.$$
Considering their centers, by Proposition \ref{prop-lef-dec} part 1, these two group homomorphisms $\alpha $, $\beta$ vanish. Thus the sequences (\ref{short-psi-e}) and (\ref{short-psi-o}) split with isomorphisms:
$$H_\psi^{2*}(X_s)\iso H^{2*}(X_{\bar\eta})^I, \qquad H^{2*-1}(X_{\bar\eta})_I(-1)\iso H_\psi^{2n+1-2*}(X_s)^\vee (-n-1),$$
$$H_\psi^{2*+1}(X_s)\iso H^{2*+1}(X_{\bar\eta})^I,\qquad H^{2*}(X_{\bar\eta})_I(-1)\iso H_\psi^{2n-2*}(X_s)^\vee (-n-1).$$
Combing these splittings, we get the splitting for sequences (\ref{short-psi}) and the isomorphisms:
$$H_\psi^{*}(X_s)\iso H^{*}(X_{\bar\eta})^I, \qquad H^{*-1}(X_{\bar\eta})_I(-1)\iso H_\psi^{2n+1-*}(X_s)^\vee (-n-1).$$
 In particular, we have part 2 of the theorem. 

It is clear that part 2 implies part 3 and that part 3 implies part 4. 

Now assume part 4.  By duality, the $H^i(X_\bata)_I$ has weight $\ge i$. Thus the sequence (\ref{short-gal}) splits according to weight comparison. 
In particular, we have 
$$H_\psi^i(X_s)=W_iH^i(X_s)\simeq H^i(X_\bata)^I.$$
Now the hard Lefschetz for  $H^*(X_\bata)$ proved by Deligne \cite{De80} gives the hard Lefschetz for $H^*_\psi (X_s)$.
Thus we have completed  the proof of theorem. 
\end{proof}

\begin{remark} 
Combined with known cases of the weight monodromy conjecture, Conjecture \ref{conj-coh} holds for $H_\psi^*(X_s)$ in the following cases:
\begin{enumerate}
\item $X/S$ is smooth, see Deligne \cite{De80};
\item $X_\eta$ is a curve, or  an abelian variety [SGA7];
\item $X_\eta$ is a surface,  see Rapoport--Zink  \cite{RZ} for semistable case, and de Jong's alteration \cite{dJ} for the general case;
\item $K$ has positive characteristic, see Deligne   \cite{De80} for $k_0$ a finite field, and Ito  \cite{It05a} for the general case;
\item $k$ has characteristic $0$, see Steebrink \cite{St} and Saito  \cite{Sa};
\item $X_\eta$ is a set-theoretically complete intersection in a toric variety. see Scholze \cite{Sc};
\item $X$ has a uniformization by Drinfeld  upper half spaces \cite{It05b}.
\end{enumerate}
\end{remark}

For $A_\psi^*(X)$ there is  a slightly weaker result:
\begin{thm} \label{thm-lef-ac} Assume either the smoothness of $X/S$  or both Conjectures \ref{conj-nd} and \ref{conj-lef} for $A_\psi^*(X_s)$.
Then the map 
$$A_\psi ^*(X_s)\lra \gr^W_{2*} H^{2*}(X_\bata)^I(*)$$
is injective. 
\end{thm}
\begin{proof}
If $X/S$ is smooth, then 
$$A_\psi^*(X)=A^*(X_s)\subset H^{2*}(X_s)(*)=H^{2*}(X_\bata)^I.$$

If Conjectures  \ref{conj-nd} and \ref{conj-lef} for $A_\psi^*(X_s)$ hold, then the class map 
$$A_\psi ^i(X_s)\lra \gr^W_{2i}H_\psi ^{2i}(X_s)(i)$$
is injective. This follows from the fact that the map 
respects to the pairing. 

We consider the embedding from  equation (\ref{short-gal-e}), 
 $$
A_\psi ^*(X_s)\emb \gr ^W_{2*} H^{2*}(X_\eta)(*)\simeq \gr^W_{2*}H^{2*-1}(X_\bata)_I(*-1)\oplus \gr^W_{2*}H^{2*}(X_\bata)^I(*).
$$
The composition with the first projection gives a map 
$$A_\psi ^*(X_s)\lra \gr^W_{2*}H^{2*-1}(X_\bata)_I(*-1).$$
This is a morphism between two Lefschetz modules with centers $(n+1)/2$ and $n/2$.
Thus by Proposition \ref{prop-lef-dec}, this map must vanish. 
Thus we have the injectivity  in the theorem. 
\end{proof}

\subsection{Perverse decompositions}

In the following, we want to give an interpretation of our conjectures in terms of  perverse cohomology of the complex $Rf_*\BQ_\ell\in D_c^b(S)$ 
 as defined in \cite{BBDG}. Recall that by definition a perverse sheaf $F$ on $S$  is  a complex in $D_c^b(S)$ such that both $F$ and $\RD(F)$ are in $D^{[-1, 0]}_c(S)$, where $\RD$ is the Verdier duality operator on 
 $Rf_*\BQ_\ell\in D_c^b(S)$ defined by
 $$\RD(F)=R\CH om(F, \BQ_\ell[2] ).$$
 It can be shown that  for any complex $F\in D^{[-1, 0]}(S)$, $F$ is perverse if and only if the following conditions hold: \begin{enumerate}
 \item $\CH^{-1}F$ is a ``torsion free" sheaf on $S$ in the  sense that  the morphism $\CH^{-1}F\lra j_*j^*\CH^{-1}F$ is injective, 
 \item  $\CH^0F$ is a ``torsion sheaf" in the sense that $j^*\CH^0F=0$ or equivalently $\CH^0F=i_*i^*\CH^0F$.
 \end{enumerate}
 Any perverse sheaf is a successive extension of simples sheaves which have forms $i_*U_s$ or $j_*V_\eta[-1]$
 for simple sheaves $U_s$, $V_\eta$ at $s$ and $\eta$.

 The first result is the following decomposition theorem which we have learned from Weizhe Zheng \cite{ZWZ}:
 
 \begin{lem}\label{lem-dec-pH} For any $C\in D_c^b(S)$, there is  a  decomposition of complexes:
 $$C\simeq \bigoplus _{m\in \BZ} \pH^m C[-m].$$
 \end{lem}
 \begin{proof}
 It suffices to to show that for any integer $n$, $\Ext^1({^p\!\tau^{>n}}C, {^p\!\tau^{\le n}}C)=0$.
 Since $C$ is a successive extension of $\pH^mC[-m]$, it suffices to show that $\Ext^m(F, G)=0$
 for all perverse sheaves $F, G$ and $m\ge 2$. We may even reduce the following  three situations:
 $$F=i_*\BQ_\ell, \qquad F=\BQ_\ell[1], \qquad F=j_!V_\eta[1],$$
 where $V_\eta$ is non-constant simple sheaf at $\eta$.
  Since $Ri^! G=Di^*DG$, we have
 $$RHom (i_*\BQ_\ell, G)=RHom (\BQ_\ell, Ri^!G)\in D^{[0, 1]},$$
 $$RHom (\BQ_\ell[1], G)=RHom (\BQ_\ell[1], i^*G)\in D^{[0, 1]},$$
 $$RHom (j_!V_\eta[1], G)=RHom(V_\eta[1], j^*G)\in D^{[0, 1]}.$$
 It follows that $RHom (F, G)\in D^{[0, 1]}$. Thus 
 $$\Ext^m(F, G)=H^m (RHom (F, G))=0, \qquad\forall m\ge 2.$$
 \end{proof}
 
 \begin{remark}
 Unlike the usual cohomolgy splitting of complexes,  the perverse cohomology splitting in Lemma \ref{lem-dec-pH} may  not be unique 
 as there could be nontrivial elements in 
 $$\Hom (\pH^m C[-m], \pH^{m+1} C[-m-1])\iso \Hom (\CH^0(\pH^m C), \CH^{-1}(\pH^{m+1} C)).$$
 \end{remark}
 
 Applying  the sheaf cohomology to the identity in the lemma, we obtain a decomposition:
$$\CH^i(C)=\CH^{-1}(\pH^{i+1}C)\oplus \CH^0(\pH^iC).$$
 From this identity, it is clear that $\CH^0(\pH^iC)$ is the maximal torsion subsheaf of $\CH^i(C)$. Thus we have 
 
 \begin{equation}\label{seq-pH0}
 \CH^0(\pH^iC)=\Ker (\CH^i(C)\lra j_*j^*\CH^i(C)), 
 \end{equation}
 \begin{equation}\label{seq-pH1}
\CH^{-1}(\pH^{i+1}C)=\Im (\CH^i(C)\lra j_*j^*\CH^i(C)).
\end{equation}

In the following, we apply the above decomposition to the complex $Rf_*\BQ_\ell$.
We make the following conjectures:

\begin{conj}\label{conj-perv-dec}
On $S$,  there is  a  splitting of complexes:
$$\pR^if_*\BQ_\ell \iso \CH^0(\pR^if_*\BQ_\ell )\oplus \CH^{-1}(\pR^if_*\BQ_\ell)[1].$$
Moreover $\CH^{-1}(\pR^if_*\BQ_\ell)\iso j_*j^*R^{i-1}f_*\BQ_\ell$.
\end{conj}

\begin{conj}\label{conj-perv-lef}
For any $i\le n+1$, there is  an isomorphism
$$\sL^i : \pR^{n+1-i} f_*\BQ_\ell \iso  \pR^{n+1+i}f_*\BQ_\ell (i).$$
\end{conj}

\begin{remark} These two conjectures could be extended to a more general situation. More precisely, the statements in \cite[Theorem 6.2.5 6.2.10]{BBDG} should hold for any proper morphism of schemes.
\end{remark}

\begin{thm}\label{thm-perv} The  conjecture of Lefchetz type \ref{conj-coh} for $H_\psi^*(X)$ is equivalent to  Conjecture \ref{conj-perv-dec}. Assume Conjecture  \ref{conj-perv-dec}, then  Conjecture \ref{conj-coh} of Lefchetz type  for $H_\varphi^*(X)$ is equivalent to  Conjecture \ref{conj-perv-lef}. 
 \end{thm}
 
 \begin{proof} By Theorem \ref{thm-lef}, Conjecture \ref{conj-coh} of Lefchetz type  for $H_\psi^*(X)$ is
 equivalent to the surjectivity of the morphisms of sheaves  for each $i$ :
 $$R^if_*\BQ_\ell\lra j_*j^*R^ij_*\BQ_\ell.$$
 By the exact sequence (\ref{seq-pH1}), this is equivalent to the bijectivity
 $$\CH^{-1}(\pR^if_*\BQ_\ell)\iso j_*j^*\CH^{-1}(\pR^if_*\BQ_\ell).$$

 Notice that  
 $$\RD(\pR^if_*\BQ_\ell )=\pR^{2n+2-i}f_*\BQ_\ell (1+n) .$$
  Thus for the first part of theorem, it suffices to prove the following lemma which we have learned again from Weizhe Zheng 
  \cite{ZWZ}:
 \begin{lem} Let $F$ be a perverse sheaf on $S$. Then $F$ is split in the following sense
 $$F=\CH^0(F)\oplus \CH^{-1}(F)[1]$$
 if and only if 
 $$\CH^{-1}\RD (F) \iso j_*j^*\CH^{-1}\RD (F).$$
 \end{lem}
 \begin{proof} 
 Write exact sequences for $F$ and $\RD (F)$ using their cohomology 
 \begin{equation}\label{eq-hfh}
 0\lra \CH^{-1}(F)[1]\lra F\lra \CH^0(F),\lra 0\end{equation}
 \begin{equation}\label{eq-hdfh}
 0\lra \CH^{-1}\RD ( F)[1]\lra \RD (F)\lra \CH^0\RD( F)\lra 0,\end{equation}
 Write $U=i^*\CH^0\RD F$ and  $V=j^*\CH^{-1}\RD (F)$  as sheaves  at $s$ and $\eta$ respectively.

If $\CH^{-1}\RD (F) \iso j_*j^*\CH^{-1}\RD (F)=j_*V$, then apply $\RD$ to sequence \ref{eq-hdfh} to  get 
$$0\lra i_*U^D\lra F\lra j_*V^D[1]\lra 0$$
where
 $$U^D:=\Hom (U, \BQ_\ell ), \qquad V^D:=\Hom (V, \BQ_\ell(1)).$$
 Taking sheaf cohomology $\CH^*$, we obtain a surjective map
 $$i_*U^D\surj \CH^0(F).$$
 Taking global sections, this gives $U^D\surj i^*\CH^0(F)$. Any section of this map will provide a splitting of (\ref{eq-hfh}).

  Conversely, assume $F$ is split, then there is  a composition of the following surjective morphisms of perverse sheaves:
 $$F\lra \CH^{-1}F[1]\lra j_*j^*\CH^{-1}F[1].$$
 This is a surjective morphism in the category of perverse sheaves with the kernel of the form $i_*X$ with $X$ a vector space concentrated at degree $0$.
 By duality, we have 
 $$0\lra j_*j^*\RD \CH ^{-1}F [1]\lra \RD F\lra i_*X^D\lra 0.$$ 
 This implies that 
 $$\CH^{-1}\RD F=j_*j^*\RD \CH ^{-1}F.$$
 \end{proof}

It remains to prove the second part of theorem. Notice that the  hard Lefschetz for $\pR^*f_*\BQ_\ell$ is equivalent to the hard Lefschetz at $(\pR^*f_*\BQ_\ell)_\eta$ and $(\pR^*f_*\BQ_\ell)_s$. 
At $\eta$, it is the hard Lefschetz on $H^*(X_\bata)$ proved by  by Deligne \cite{De80}.
At $s$, under   Conjecture \ref{conj-coh} of Lefchetz type  for $H_\psi^*(X)$, by Theorem \ref{thm-lef},  formulae
(\ref{seq-pH0}, \ref{seq-pH1}), we have that 
$$\CH^{-1}(\pR^{i+1}f_{s*}(\BQ_\ell))=H_\psi^i(X), \qquad \CH^0(\pR^{i}f_{s*}(\BQ_\ell))=H_\varphi^i(X).$$
Thus there is  an exact sequence
$$0\lra H_\psi^{*-1}(X)[1]\lra (\pR^*f_*\BQ_\ell)_s\lra H_\varphi^*(X)\lra 0.$$
By the assumption in the theorem, the hard Lefschetz holds for $H_\psi^{*-1}(X)[1]$ with  center $n+1$. Thus
the hard Lefschetz for $(\pR^*f_*\BQ_\ell)_s$ with  center  $n+1$ is equivalent to the hard Lefschetz for $H_\varphi^*(X)$ with center $n+1$.
\end{proof}

By Beilinson--Bernstein--Deligne--Gabber \cite{BBDG}, we have the following:
\begin{cor}\label{cor-bbdg}    Assume that $S$ has equal characteristic, then  conjectures \ref{conj-coh},  \ref{conj-perv-dec}, and \ref{conj-perv-lef} 
all hold.
\end{cor}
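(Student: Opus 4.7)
The plan is to deduce all three conjectures from the decomposition theorem and the relative hard Lefschetz theorem of Beilinson--Bernstein--Deligne--Gabber. By Theorem \ref{thm-perv}, Conjecture \ref{conj-coh} follows once Conjectures \ref{conj-perv-dec} and \ref{conj-perv-lef} are established, so I focus on the latter two statements about $Rf_*\BQ_\ell$ on $S$.

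To apply \cite{BBDG}, I first descend $f: X \to S$ to a geometric situation of finite type. Using the approximation step from Section \ref{ss-cc}, write $X = X_0 \otimes_{S_0} S$ with $R_0$ a complete discrete valuation ring whose residue field $k_0$ is finitely generated over its prime field. Under the equal characteristic hypothesis, this prime field is either $\BF_p$ or $\BQ$. In the positive characteristic case, one further spreads out $X_0 \to S_0$ to a proper flat morphism $f_1: X_1 \to S_1$ of schemes of finite type over $\Spec \BF_p$, with $X_1$ regular. In the characteristic zero case, one appeals to M.~Saito's theory of mixed Hodge modules, which yields the same decomposition and hard Lefschetz statements and is compatible with the $\ell$-adic realization; alternatively, one combines spreading out over a finitely generated $\BZ$-subring with a reduction modulo a sufficiently large prime.

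Once in this setting, the decomposition theorem of \cite{BBDG} applied to $f_1$ and to $\BQ_\ell[\dim X_1]$ (which is, up to shift, the intersection complex $IC_{X_1}$ because $X_1$ is regular) expresses $Rf_{1,*}\BQ_\ell[\dim X_1]$ as a direct sum of shifted simple perverse sheaves on $S_1$. Extracting the perverse cohomology in each degree yields the splitting $\pR^i f_{1,*}\BQ_\ell \cong \CH^0(\pR^i f_{1,*}\BQ_\ell) \oplus \CH^{-1}(\pR^i f_{1,*}\BQ_\ell)[1]$ predicted by Conjecture \ref{conj-perv-dec}. Applying the relative hard Lefschetz theorem of \cite{BBDG} to the first Chern class of the ample bundle $L$ gives the isomorphism $\sL^i : \pR^{n+1-i} f_{1,*}\BQ_\ell \iso \pR^{n+1+i} f_{1,*}\BQ_\ell (i)$ of Conjecture \ref{conj-perv-lef}. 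Both assertions transfer back to $S$ by proper base change for the perverse $t$-structure.

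The main obstacle is the bookkeeping in the base-change step: one must verify that the perverse truncations $\pR^i$, the splitting in the decomposition theorem, and the Lefschetz operator $\sL$ are all compatible with the successive base changes $S \to \Spec R_0 \to S_1$. This is standard but requires some care, especially for confirming that the first Chern class of $L$ descends to an operator inducing relative hard Lefschetz on $S_1$. Once this is in place, Conjectures \ref{conj-perv-dec} and \ref{conj-perv-lef} hold on $S$, and Theorem \ref{thm-perv} then yields Conjecture \ref{conj-coh}, completing the proof.
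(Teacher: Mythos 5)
Your proposal is correct and takes essentially the same route as the paper: both reduce to a finite-type situation by spreading out, then invoke BBDG's decomposition theorem and relative hard Lefschetz (Theorems 6.2.5 and 6.2.10 of \cite{BBDG} in characteristic $0$, and Corollary 5.4.2, Theorems 5.3.8 and 5.4.10 after reduction to finite fields in characteristic $p$), transferring back and finishing with Theorem \ref{thm-perv}. The paper's proof is terser and does not separately mention Saito's mixed Hodge modules, but the underlying argument is the same.
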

\begin{proof} When $k$ is of characteristic $0$, these are special cases of \cite[Theorem 6.2.5, Theorem 6.2.10]{BBDG}. When $k$ has characteristic $p$, then the same proof over $\BQ$
will reduce the problems to the statements for varieties defined over finite fields: \cite[Corollary 5.4.7,  and  Theorem 5.4.10]{BBDG}.
\end{proof}

Notice that the results over finite fields were first proved by Deligne \cite{De80}.

\subsection{Strict semistable reductions}
As one attempt to deduce  our extended standard conjectures from   Grothendieck's  standard conjectures
\cite[Conjectures 1, 2]{Gr},
we have the following general result based on previous work of 
Bloch--Gillet--Soul\'e \cite{BGS97} and K\"unnemann \cite{Ku98a}:
 \begin{thm}\label{thm-ssr} Let $f: X\lra S$ be as in \S\ref{ss-cc}.
 Assume that $X$ has strictly semistable reductions and that on each stratum $Y_I$ of dimension $n_I=n+1-|I|$, the group $A(Y_I)$ of algebraic cohomology cycles satisfies Grothendieck's standard conjectures \cite[Conjectures 1, 2]{Gr}.
  Then  Conjectures \ref{conj-nd},  \ref{conj-lef}, and  \ref{conj-hod} hold. 
  \end{thm}
 
 \begin{proof} We will use results in  \cite{BGS97} where there are  different  definitions of $A^*(Y)$ and $A_*(Y)$. To avoid confusion, we denote their groups as $A^*(X_s)_{BGS}$ and $A_*(X_s)_{BGS}$. Recall that these groups are defined 
 using C\v ech complexes as  (\ref{eq-hhh1}), (\ref{eq-hhh2}), (\ref{eq-ccc1}) and (\ref{eq-ccc2}):
 \begin{equation}\label{eq-aaa1}
 0\lra A^*(X_s)_{BGS}\lra \bigoplus _i A^*(Y_i)\lra \bigoplus _{i\le j}A^*(Y_{ij}),\end{equation}
 \begin{equation}\label{eq-aaa2}
 \bigoplus _{i\le j}A_* (Y_{ij})\lra A_*(Y_i)\lra A_*(X_s)_{BGS}\lra 0,\end{equation}
 where for smooth variety $Z$ over a field of dimension $d$, $A_*(Z)$ is defined to be $A^{d-*}(Z)$.
 There are morphisms among various exact sequences:
 $$(\ref{eq-ccc1})\surj (\ref{eq-aaa1})\lra (\ref{eq-hhh1}), \qquad (\ref{eq-ccc2})\surj (\ref{eq-aaa2})\lra (\ref{eq-hhh2}).$$
 The composition are the class maps (\ref{seq-class})  of Chow groups. Thus there are  maps
 \begin{equation}\label{eq-aaaa}
 A_*(X_s)_{BGS}\surj A_*(X_s), \qquad A^*(X_s)_{BGS}\iso A^*(X_s).
 \end{equation}
 In the exact sequences (\ref{eq-aaa1}) and (\ref{eq-aaa2}), by assumption on the Grothendieck standard conjectures
 \cite[Conjectures 1, 2]{Gr}, $A^*(Y_i)$ is dual to $A_*(Y_i)=A^{n-i}(Y_i)$ and $A^*(Y_{ij})$ is dual to $A_*(Y_{ij})=A^{n-1-*}(Y_{ij})$. It follows that the intersection pairing on $A^*(Y_i)$ induces a perfect pairing 
 $$A^*(X_s)_{BGS}\times A_*(X_s)_{BGS}\lra \BR.$$
 Since this pairing is compatible with pairing on $A^*(X_s)\times A_*(X_s)$,
 we have that $A_*(X_s)_{BGS}\iso A_*(X_s)$. Thus we have proved Conjectures \ref{conj-nd}.

 For proving the rest of the theorem, we notice that the maps (\ref{eq-aaaa}) are compatible with connection map $i^*i_*$, i.e., the following diagram is commutative:
 $$\xymatrix
 {
 A_{n+1-*}(X_s)_{BGS} \ar[r]^{i^*i_*}\ar[d]^\wr&A^*(X_s)_{BGS}\ar[d]^\wr\\
 A_{n+1-*}(X_s)\ar[r]^{i^*i_*}&A^*(X_s).
 }
 $$

 We define the groups $A_\varphi^*(X_s)_{BGS}$ and $A_\psi^*(X_s)_{BGS}$ as image and cokernel of $i^*i_*$ on the top row analogously. Then there is an isomorphism  of two exact sequences:
 $$\xymatrix
 {
 0\ar[r]&A_\varphi^*(X_s)_{BGS}\ar[r]\ar[d]^\wr &A^*(X_s)_{BGS} \ar[r]\ar[d]^\wr&A^*_\psi(X_s)_{BGS}\ar[r]\ar[d]^\wr &0\\
 0\ar[r]&A_\varphi^*(X_s)\ar[r]&A^*(X_s) \ar[r]&A^*_\psi(X_s)\ar[r]&0.
 }
 $$
 
 The main results of Bloch--Gillet--Soul\'e \cite[Theorem 6]{BGS97} and K\"unnemann \cite[Theorem 2.17]{Ku98a} are that $A_\psi ^*(X_s)_{BGS}$ and $A_\varphi^*(X_s)_{BGS}$  both satisfy  Conjectures \ref{conj-lef}  and \ref{conj-hod}. \end{proof}

\begin{remark}\label{rem-sss} Here are some examples where the assumption of the theorem hold:
\begin{enumerate}
\item $X_\eta$ is a curve or a surface;
\item $X_\eta$ is an abelian variety with totally degenerate fiber: $X_s$ is a union of toric varieties, \cite[ \S 3.4]{Ku98a};
\item $X$ is the quotient of a Drinfeld upper half space with $\sL$ induced from  the canonical bundle, \cite[Proposition 4.4.]{It05b}.
\end{enumerate}
\end{remark}

\begin{remark}
In terms of their group $A^*(X_s)_{BGS}$, the {\em harmonic decomposition}  was already given by Bloch--Gillet--Soul\'e \cite[Theorem 6, part (iii)]{BGS97} using a Laplacian operator. 
\end{remark}

\subsection{Admissible cycles}
Let $f: X\lra S$ be as in \S \ref{ss-cc}. Then there is a chain of maps between various  cycles:
$$Z_{n+1-*} (X_s)\overset {i_*} \lra Z_{n+1-*} (X)=Z^*(X)\lra \Ch^*(X)\lra \Ch^*(X_s)\overset {i^*} \lra A^*(X_s).$$
We want to modify all  cycle groups   by modulo the images of  the kernel of the following map
$$Z_{n+1-*}(X_s)\lra A_{n+1-*} (X_s)$$ to obtain a new chain of maps:
\begin{equation}\label{eq-ici}
A_{n+1-*} (X_s)\overset {i_*} \lra \wh Z_{n+1-*} (X)=\wh Z^*(X)\lra \wh \Ch^*(X)\overset{i^*} \lra A^*(X_s).\end{equation}
The $i^*$  is called  the {\em curvature map} and denoted it by $\omega $.

Let $L$ be an ample line bundle, and  assume that Conjecture \ref{conj-lef} holds. Then by Theorem \ref{thm-harac}, and Corollary  \ref{cor-arch},
 there is a decomposition 
\begin{equation}
\label{eq-har-dec}
A^*(X_{s})=A_\varphi^*(X_{s})\oplus \CA^*_\psi(X_{s}),
\end{equation}
where $\CA_\psi^*(X_s)$ is the space of harmonic forms. 

We say that a class  $x\in \wh Z^*(X)$ (resp, $\wh \Ch^*(X)$) is {\em admissible}, if its curvature $\omega(x)$ is harmonic. 
Let $\ol Z^*(X)$ (resp. $\ol \Ch^*(X)$) denote the group of admissible classes called the {\em Arakelov group}.
It is clear that $A_\varphi^*(X_s)$ is the image $i^*i_*$ in (\ref{eq-ici}).  Thus every class in $\wh Z^*(X)$ can be modified to be admissible by adding a vertical cycle on a special fiber. 
Denote  $A_*^\psi (X_s)$ as the kernel of $i^*i_* : A_*(X_s)\lra A^{n+1-*} (X_s)$.
Then there is   an exact sequence:
\begin{equation}\label{seq-BZZ}
0\lra i_*A_{n+1-*}^\psi (X_s)\lra \ol Z^*(X)\lra Z^*(X_\eta)\lra 0.
\end{equation}

\subsubsection*{Arakelov liftings}
We want to define some canonical lifting for the sequence \ref{seq-BZZ}. 
For any  cycle $z\in Z^*(X_\eta)$ with  Zariski closure $z^\zar$, an admissible  lifting $z^\Ara$ of $z$  is called {\em Arakelov  lifting} if the difference $z^\Ara-z^\zar=i_*g$ for some $g\in A_{n+1-*}(X_s)$ that  is perpendicular to the image of $\CA_\psi^{n+1-*}(X_s)\subset A^{n+1-*} (X_s)$ in the space of harmonic forms. 

\begin{thm}\label{thm-adm} Assume either the smoothness of $X/S$ or Conjectures \ref{conj-nd} and \ref{conj-lef}. 
Then for any  cycle $z\in Z^*(X_\eta)$, the Arakelov  lifting $z^\Ara$ of $z$ exists and is unique. 
\end{thm}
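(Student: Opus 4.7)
The plan is to construct $z^\Ara$ by modifying the Zariski closure $z^\zar$ by a suitable vertical cycle $i_*g$, with $g$ pinned down first by an admissibility condition (modulo $\ker(i^*i_*)$) and then by the perpendicularity condition, which is well-posed thanks to a perfect duality between $\ker(i^*i_*)$ and the space of harmonic forms.

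Given $z \in Z^*(X_\eta)$, let $z^\zar \in \wh Z^*(X)$ be its Zariski closure. By Theorem \ref{thm-harac}, which uses Conjecture \ref{conj-lef}, one decomposes the curvature
$$\omega(z^\zar) = \omega_\phi + \omega_\psi, \qquad \omega_\phi \in A^*_\phi(X_s), \quad \omega_\psi \in \CA^*_\psi(X_s);$$
in the smooth case the remark after Conjecture \ref{conj-hod} gives $\omega_\phi = 0$ automatically. Since $A^*_\phi(X_s) = \Im(i^*i_*)$ by definition, solve $i^*i_*g_0 = -\omega_\phi$ for some $g_0 \in A_{n+1-*}(X_s)$; then $z^\zar + i_*g_0$ is admissible. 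Every other admissible lifting of $z$ takes the form $z^\zar + i_*(g_0+g_1)$ with $g_1 \in A^\psi_{n+1-*}(X_s) := \ker(i^*i_*|_{A_{n+1-*}})$, so the task reduces to selecting $g_1$ uniquely.

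The heart of the proof is to show that the intersection pairing on $X_s$ restricts to a perfect pairing
$$A^\psi_{n+1-*}(X_s) \times \CA^{n+1-*}_\psi(X_s) \lra \BQ.$$
By Conjecture \ref{conj-nd} (or Grothendieck's standard conjecture for $X_s$ in the smooth case), the ambient pairing $A_*(X_s) \times A^*(X_s) \to \BQ$ is perfect. A projection-formula calculation yields the self-adjointness of $i^*i_*$: for $x \in A_{n+1-k}(X_s)$ and $y \in A_k(X_s)$,
$$(i^*i_*x,\,y)_{X_s} = \deg_X(i_*x \cdot i_*y) = (x,\,i^*i_*y)_{X_s}.$$
Self-adjointness combined with non-degeneracy shows that $A^\psi_{n+1-*}(X_s)$ is precisely the annihilator of $A^*_\phi(X_s)$ in $A_{n+1-*}(X_s)$; hence the ambient pairing descends to a perfect pairing between $A^\psi_{n+1-*}(X_s)$ and the quotient $A^{n+1-*}(X_s)/A^{n+1-*}_\phi(X_s) \cong \CA^{n+1-*}_\psi(X_s)$, the last identification coming from the splitting of Theorem \ref{thm-harac}.

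From this perfect restricted pairing, the perpendicularity condition uniquely determines $g_1 \in A^\psi_{n+1-*}(X_s)$ such that $g := g_0 + g_1$ is orthogonal to $\CA^{n+1-*}_\psi(X_s)$, and $z^\Ara := z^\zar + i_*g$ is the desired Arakelov lifting. The main obstacle is establishing the restricted perfect duality between $A^\psi_{n+1-*}(X_s)$ and $\CA^{n+1-*}_\psi(X_s)$, which is exactly where Conjecture \ref{conj-nd} enters in an essential way; the remaining ingredients — the curvature decomposition, the preimage under $i^*i_*$, and the self-adjointness via the projection formula — are formal once Theorem \ref{thm-harac} is available.
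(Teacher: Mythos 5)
Your proposal is correct and takes essentially the same approach as the paper: both rest on the harmonic decomposition from Theorem \ref{thm-harac} together with the non-degeneracy from Conjecture \ref{conj-nd}, and both pin down the Arakelov lifting by a perpendicularity condition against $\CA^{n+1-*}_\psi(X_s)$. The paper phrases the key step as a "dual decomposition" $A_*(X_s) = A^\psi_*(X_s) \oplus \CA^\phi_*(X_s)$, with $\CA^\phi_*(X_s)$ the orthogonal complement of $\CA^*_\psi(X_s)$, and then observes that $g$ can be shifted by an element of $A^\psi_{n+1-*}(X_s)$ into $\CA^\phi_{n+1-*}(X_s)$; your perfect restricted pairing $A^\psi_{n+1-*}(X_s) \times \CA^{n+1-*}_\psi(X_s) \to \BQ$, established via the self-adjointness of $i^*i_*$ and the identification $A^\psi_{n+1-*}(X_s) = (A^{n+1-*}_\phi(X_s))^\perp$, is exactly the content of that dual decomposition. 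Your write-up makes explicit the self-adjointness computation that the paper leaves implicit, which is a reasonable expository gain, but mathematically the two arguments coincide.
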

\begin{proof} 
If $X/S$ is smooth, then we simply take $z^\Ara=z^\zar$. So we can assume  Conjectures \ref{conj-nd}, \ref{conj-lef} in the following. 
Let's start with  the following  exact sequence
$$0\lra A^\psi _{n+1-*} (X_s)\lra A_{n+1-*}(X_s)\overset{i^*i_*} \lra A^*(X_s)\lra A_\psi ^*(X_s)\lra 0.$$
By Conjecture 2.1.2,   the above sequence   is dual to the same sequence with $*$ replaced by $n+1-*$. The decomposition \ref{eq-har-dec} implies a dual decomposition 
$$A_*(X_s)=A_*^\psi (X_s)\oplus \CA^\varphi _* (X_s).$$
The $\CA^\varphi _*(X_s)$ is in fact the orthogonal complement of $\CA_\psi ^* (X_s)$. 

Now for any $z\in Z^*(X_\eta)$, and any lifting $\bar z\in \ol Z^*(X_\eta)$, their difference has an expression $\bar z-z^\zar=i_*g$ for some $g\in A_{n+1-*}(X_s)$. We can modify this element by adding some element in $A^\psi _{n+1-*}(X_s)$ so that it  belongs to $\CA^\varphi _{n+1-*}(X_s)$. 
\end{proof}

As an application of Proposition  \ref{thm-adm},   for two disjoint cycles $z_1\in Z^i(X_\eta)$ and $z_2\in Z^j(X_\eta)$ with $i+j=n+1$, we can 
define their {\em Arakelov  height  pairing} as follows:
$$(z_1, z_2)_\Ara = z_1^\Ara \cdot z_2^\Ara =z_1^\Ara \cdot z^\zar _2.$$

The archimedean analog of the above construction is classical due to Arakelov, Faltings, and Gillet--Soul\'e. For a smooth, complex projective variety $X$ with a K\"ahler form $\omega$,  we can extend any cycle $z\in Z^*(X)$ to a Green current $g$ so that $\pp g=\delta _z-h_z$ where $h_z$ is the harmonic form representing $z$.
This current is unique up to an addition of a harmonic form. We may further normalize this current by requiring that this current is perpendicular to all harmonic forms. The resulting cycle $z^\Ara =(z, g)$ is the Arakelov lifting of $z$. The admissible height pairing of two disjoint cycles $z_1, z_2$ is then 
$$(z_1, z_2)_\Ara=\int _{X(\BC)} g_1\delta _{z_2}$$
where $z_1^\Ara=(z_1, g_1).$

By Theorem \ref{thm-ssr}, we have the following:
\begin{cor}\label{cor-adm} Assume that  $X/S$ is strictly semistable and that  Grothendieck's standard conjectures
\cite[Conjectures 1, 2]{Gr} holds. Then 
the Arakelov  height pairings are  well-defined for cycles on $X$.
\end{cor}

\subsubsection*{Beilinson--Bloch liftings}
 
 A cycle $z\in \Ch^*(X_\eta)$ is called homologically trivial if its class in $H^{2*}(X_\bata)(*)$ is trivial. 
For a homologically trivial cycle $z\in \Ch ^*(X_\eta)$, a lifting $z^\BB\in \wh \Ch^*(X)$ is called a {\em Beilinson--Bloch lifting} if $z^\BB$ has vanishing curvature in $A^*(X_s)$.

 \begin{prop}\label{prop-bb} Assume either the smoothness of $X/S$  or the standard Conjectures \ref{conj-nd}, \ref{conj-lef}.
An admissible class has curvature $0$  if and only if it is homologically trivial.  In particular, the Beilinson--Bloch lifting exists for all
homologically trivial cycles. 
\end{prop}
\begin{proof}
By Theorem \ref{thm-lef-ac}, the $A_\psi^*(X_s)$ is isomorphic to its image in $H^{2*}(X_{\bata})(*)$.
Thus the curvature map is the following  composition of maps:
$$\ol Z^*(X)\surj A^*(X_\eta)\emb \CA_\psi^*(X_s),$$
where $A^*(X_\eta)$ is the image of $Z^*(X_\eta)$ in $H^{2*}(X_{\bata})(*)$. 
\end{proof} 

As an application of Proposition  \ref{prop-bb},   for two disjoint cycles $z_1\in Z^i(X_\eta)$ and $z_2\in Z^j(X_\eta)$ with $i+j=n+1$ such that $z_1$ is homologically trivial,  we can 
define their {\em Beilinson--Bloch local height  pairing} as follows:
$$(z_1, z_2)_\BB =z_1^\BB\cdot z^\zar_2 .$$

The archimedean analog of the above construction is classical due to Arakelov, Faltings, and Gillet--Soul\'e. For a complex projective variety $X$ with a K\"ahler form $\omega$,  we can extend any homologically trivial cycle $z\in Z^*(X)$ to a Green current $g$ so that $\pp g=\delta _z$.
The Beilinson--Bloch  height pairing of two disjoint cycles $z_1, z_2$ is then  defined as follows:
$$(z_1, z_2)_\BB=\int _{X(\BC)} g_1\delta _{z_2}$$
where $z_1^\BB=(z_1, g_1)$ is an admissible lifting of $z_1$.

For non-strictly semistable reduction, we have the following weaker result:

\begin{thm}\label{thm-bb} 
Assume the Grothendieck standard conjectures \cite[Conjectures 1, 2]{Gr}.  The Beilinson--Bloch lifting exists for every homologically trivial cycle.
\end{thm}

\begin{proof}
 We need to show that for any homologically trivial cycle $z\in \Ch^*(X_\eta)$, we can find an extension $\bar z$ which has vanishing curvature.

 Apply de Jong's theorem \cite {dJ} to get a morphism $\pi: X'\lra X$ such that $f'=f\circ \pi: X'\lra S$ satisfies same property as $f$ with $X'$ having strictly semistable reduction. Let $i': X_s'\lra X$ denote the inclusion of special fiber of $X'$.
 Then $\pi^*z$ is still homologically trivial. Thus there is an extension $\ol{\pi^*z}$ with vanishing curvature.
  Let $\bar z=\pi_! (\ol{\pi^*z})$. Then $\bar z$ also has vanishing curvature. 
\end{proof}

\begin{remark}Using Bloch--Gillet--Soul\'e's harmonic forms,  K\"unnemann  defined an  Arakelov group \cite[\S3.6]{Ku98a},  and related it to the Beilinson--Bloch height pairing \cite[\S3.8]{Ku98a},  
under the assumptions of Theorem \ref{thm-ssr}
with following additional conditions:
\begin{enumerate}
\item   $X/S$ has a model $X_0/S_0$ with $k(s_0)$ a finite field, and
\item  $H^{2*}(Y_I)(*)$ is generated by algebraic classes, and is semisimple under 
$\Gal (k/k(s_0))$ for $|I|=1, 2$ \end{enumerate}
In particular, his work covers the case of abelian varieties over local fields with total degeneration and the case of varieties uniformized by the Drinfeld upper-half spaces. 
\end{remark}

\section{Global  cycles}\label{sec-glb}

In this section,  for arithmetic varieties or algebraic varieties fibered over curves, we  define the Arakelov Chow  groups of admissible cycles
and the decomposition (\ref{thm-ac-dec}) by using 
 our structure theorems for Lefschetz modules in \S\ref{sec-lef} under local and global standard conjectures. 
We will show that  the global standard conjecture for the Arakelov Chow groups   is essentially  equivalent respectively to   the standard conjectures of
Gillet--Soul\'e and Beilinson (\ref{lem-gsadm}, \ref{thm-gsbb}).

We can unconditionally define the Arakelov cohomology groups and the decompositions in the function field case.
These cohomology groups are isomorphic to the intermediate extensions of the cohomology groups over smooth locus. Still, the decomposition (Theorem \ref{thm-coh-dec}) is different than the classical one defined by splitting of cohomology (\ref{eq-coh-dec}). 

For divisors and one cycles, we will give unconditional definitions of  admissible cycles and  decompositions (Theorems \ref{thm-dec-pic}, \ref{thm-dec-0c})  using the Hodge index theorem of Faltings \cite{Fa} and Moriwaki \cite{Mo}.
Thus we obtain an unconditional arithmetic $\sL$-liftings for divisors and $0$-cycles on the generic fibers.
(Corollaries \ref{cor-lef-pic}, \ref{cor-lef-0c}). We obtain some modular generating series of arithmetic Kudla's divisors or $0$-cycles for Shimura varieties of orthogonal or unitary types.

\subsection{Arakelov Chow groups}

\subsubsection*{Arithmetic cycles}
 Let $S$ be a 
regular scheme of dimension $1$, which is either an arithmetic curve 
$S=\Spec \CO_K$ for a number field $K$, or a smooth and projective curve over a field $k$. We call 
a place $v$ of $K$ for a point of $S$ or an infinite valuation in the number field case.

Let $f: X\lra S$ be a projective and flat morphism from a regular scheme of dimension $n+1$.
 We want to define some  modified groups of cycles
$\wh Z^*(X)$  and  $\wh \Ch ^*(X)$ as follows. 

In the geometric situation, we define 
 $\wh Z^*(X)$  as the quotient of $Z^*(X)$ modulo images of homologically trivial cycles on vertical fibers, and   $\wh\Ch^*(X)$  as the image of $\Ch^*(X)$ in $H^{2*}(X_{\bar k})(*)$ for some Weil cohomology. Notice that $H^{2*}(X_{\bar k})(*)$ is a cohomology group for the variety  $X_{\bar k}$
 rather than $H^{2*}(X_{\bar K})(*)$ for its geometric  generic fiber $X_{\bar K}$.
 
 In the arithmetic situation, we define $\wh Z^*(X)$ and $\wh\Ch^*(X)$  to be the quotients of Gillet--Soul\'e's \cite{GS91} groups of arithmetic cycles   $\wh Z^*(X)_{GS}$ and $\wh\Ch^*(X)_{GS}$ modulo the images of homologically trivial cycles on vertical fibers.

Let $L$ be an ample line bundle on $X$. In the arithmetic case, this means that  $L$ is a Hermitian line bundle on $X$ as defined by Gillet--Soul\'e \cite{GS91}  with positive curvature point-wise at archimedean places, and with positive intersections with horizontal cycles, see \cite{Zh}. As usual, let $\sL$ be the Lefschetz operator defined by 
$c_1(L)$. Here  is  (slightly modified) Gillet--Soul\'e's standard conjecture:
\begin{conj}[Gillet--Soul\'e]\label{conj-gs}
Let $i\le (n+1)/2$. 
\begin{enumerate}
\item We have an isomorphism
$$\sL^{n+1-2i}: \wh\Ch^i(X)\iso\wh\Ch ^{n+1-i}(X).$$
\item For  $x\in \wh\Ch^i(X)$, $x\ne 0$, and $\sL^{n+2-2i}x=0$, we have 
$$(-1)^i(x, \sL^{n+1-2i}x)>0.$$
\end{enumerate}
\end{conj}

\subsubsection*{Admissible cycles}

Let $s$ be a place of $K$. If $s$ is a closed point  of $S$, then there is a   morphism of schemes:
$$\breve s:=\Spec \breve \CO_{S, s}\lra  S,$$
where  $\breve \CO_{S, s}$ denotes the completion of a maximal unramified extension of $\CO_{S, s}$. This induces  a morphism
$$f_{\breve s}: X_{\breve s} :=X\times_S {\breve s}\lra \breve s.$$
Then we define 
$$A^*(X_s)=A^*(X_{\bar s})^{\Gal (\bar s/s)}.$$
If $s$ is infinite given by an embedding $K\lra  \BC$, then we have $K_{\bar s}\iso \BC$ and $K_s=\BR$ or $\BC$.
With our notation in Corollary \ref{cor-arch}, we define  
$$A^*(X_s)=A^*(X_{\bar s})^{\Gal (\bar s/s)}.$$

Now we assume  for  each closed point $s$ of $S$ that Conjecture \ref{conj-lef} holds for $f_{\breve s}$, and that Conjecture \ref{conj-nd}
holds 
when  $f_{\breve s}$ is not smooth. Then by Theorem \ref{thm-harac}, and Corollary  \ref{cor-arch},
 there is a harmonic decomposition 
$$A^*(X_{\bar s})=A_\varphi^*(X_{\bar s})\oplus \CA^*_\psi(X_{\bar s}).$$
Taking Galois invariants, we get a decomposition 
$$A^*(X_{s})=A_\varphi^*(X_{s})\oplus \CA^*_\psi(X_{s}).$$

We say that a class  $x\in \wh \Ch^*(X)$ is {\em admissible}  at $s$, if its curvature $\omega_s(x)$ is harmonic. 
We say such a class is {\em admissible} if it is  admissible everywhere.
Following  Gillet--Soul\'e \cite{GS91} and K\"unnemann \cite{Ku98a},  we define the {\em Arakelov 
Chow group} as the group of admissible classes:
$$\ol\Ch^*(X):=\left\{ x\in \wh \Ch^*(X): \omega _s(x)\in \CA_\psi^*(X_{s}), \quad \forall s\right\}.$$
We define the group of vertical cycles and the curvature map by 
$$\omega_\varphi: \quad \wh\Ch^*_\varphi (X):=\sum_s i_{s*}A_{n+1-*}(X_s)\surj A_\varphi ^*(X):=\bigoplus _s A_\varphi ^*(X_s).$$
Denote  the kernel of this curvature map by $B^*(X)$.
Then we have  the following identities and exact sequence:
$$\wh\Ch^*(X)=\ol \Ch^*(X)+\wh\Ch_\varphi^*(X), \qquad B^*(X)=\ol \Ch^*(X)\cap \wh\Ch_\varphi^*(X).$$
$$0\lra B^*(X)\lra \ol\Ch^*(X)\lra \Ch^*(X_K)\lra 0.$$

By Theorem \ref{thm-lef-ac}, the $A_\psi^*(X_s)$ is isomorphic to its  image in $H^{2*}(X_{\bar K})(*)$.
Thus the curvature map is the following  composition
$$\ol\Ch^*(X)\surj A^*(X_K)\emb \CA_\psi^*(X_s),$$
where $A^*(X_K)$ is the image of $\Ch^*(X_K)$ in $H^{2*}(X_{\bar K})(*)$. This implies that an admissible class has curvature zero at one place if and only if it is homologically trivial. Thus we have well-defined Beilinson--Bloch height pairing on the group $\Ch^*(X_K)^0$ of homologically trivial cycles.

\begin{lem}\label{lem-gsadm}
Assume the  standard conjectures 
\ref{conj-lef}, \ref{conj-hod} for $A_\varphi^*(X_s)$, $A_\psi ^*(X_s)$   for every place $s$ of $K$.
Then the  standard conjecture \ref{conj-gs} for $\wh \Ch^*(X)$ is equivalent to the standard conjecture  for $\ol \Ch^*(X)$
of Lefschetz and Hodge types.
 Moreover there is  an orthogonal   decomposition:
$$\wh\Ch^*(X)=\ol \Ch^*(X)\oplus A_\varphi^*(X).$$
\end{lem}
\begin{proof}
There is an exact sequence:
$$
0\lra \ol\Ch^*(X)\lra \wh\Ch^*(X)\lra\ A_\varphi ^*(X)\lra  0.
$$
The truth of the standard conjecture for any two of three terms in the above sequence will imply the truth for the third one. 
\end{proof}

\subsubsection*{Filtrations}

 We consider a 3-step filtration $F^*\ol\Ch^*(X)$ by
$$F^i\ol\Ch^*(X)=\begin{cases} \ol\Ch^*(X)&\text{if $i=0$,}\\
\ol\Ch^*(X)^0&\text{if $i=1$,}\\
B^*(X)&\text{if $i=2$,}
\end{cases}
$$
where $\ol\Ch^*(X)^0:=
\Ker (\ol\Ch^*(X)\lra H^{2*}(X_{\bar K})(*))$.
This filtration has graded quotients  given by 
$$G^i\ol\Ch^*(X)=
\begin{cases}
A^*(X_K)&\text{if $i=0$,}\\
\Ch^*(X_K)^0&\text{if $i=1$,}\\
B^*(X)&\text{if $i=2$,}
\end{cases}
$$
where $A^*(X_K)$ and $\Ch^*(X_K)^0$ are respectively the image and the kernel of the following map 
$$\Ch^*(X_K)\lra H^{2*}(X_{\bar K})(*).$$
Notice that the intersection pairing on $\ol\Ch^*(X)$ induces the Beilinson--Bloch height pairing on $\Ch^*(X_K)^0\iso G^1\ol\Ch^*(X_K)$.
Let $\epsilon \in B^1(X)$ denote a class of degree $1$: 
$\epsilon=\pi^*\epsilon _K$ for $\epsilon_K \in \ol\Ch^1(K)$ with degree $1$.
Then the intersection with $\epsilon$ on $\ol\Ch^*(X)$ factors through $A^*(X_K)$ with the image in $B^*(X)$.

\subsection{Decompositions}
Applying Theorem \ref{thm-fil-dec}  and Proposition \ref{prop-sym},
we have  a canonical splitting of this filtration with respect to operator 
$\sL$ and adjoint $\sLambda$. 

\begin{thm}\label{thm-ac-dec} Assume  standard Conjectures \ref{conj-lef}, \ref{conj-hod}, and \ref{conj-gs}. There is a unique splitting  of filtered  $\BR$ modules
$$\alpha=(\alpha ^0, \alpha ^1, \alpha^2): G^0\ol \Ch^*(X)\oplus G^1\ol \Ch^*(X)\oplus G^2\ol \Ch^*(X)\iso \ol \Ch^*(X)$$ 
  such that $\alpha ^1$ is $\sL$-linear, and $\alpha ^0$ is $\sL$-linear  modulo $\Im \alpha ^2$ and $\sLambda$-linear  modulo $\Im\alpha ^1$.
  Moreover we have the following properties for this splitting:
\begin{enumerate}
\item $\alpha ^1$ is  isometric when $G^1\ol\Ch^*(X)\iso \Ch^*(X_K)^0$ is equipped with the Beilinson--Bloch height pairing;
\item $\Im\alpha ^0$ is isotropic;
\item  there is an $\sL$-linear isomorphism $\beta: G^0\ol \Ch^*(X)\lra G^2\ol\Ch^{*+1}(X)$ such that 
  $\alpha$ translates the $\BR [\sL, \sLambda]$-module structure on $\ol\Ch^*(X)$ to a structure on $\bigoplus _i G^i \ol\Ch^*(X)$  defined as follows: 
for $(x^0, x^1, x^2)\in \bigoplus_{i=0}^2G^i\ol\Ch^*(X)$, 
$$ \sL  \begin{pmatrix} x^0\\ x^1\\ x^2\end{pmatrix}:
=\begin{pmatrix}\sL &0&0\\
0&\sL &0\\
\beta &0&\sL\end{pmatrix}\begin{pmatrix} x^0\\ x^1\\ x^2\end{pmatrix},
\qquad 
 \sLambda\begin{pmatrix} x^0\\ x^1\\ x^2\end{pmatrix}:
=\begin{pmatrix}\sLambda &0& \beta^{-1}\\
0&\sLambda &0\\
0 &0&\sLambda\end{pmatrix}\begin{pmatrix} x^0\\ x^1\\ x^2\end{pmatrix}.$$
\end{enumerate}
\end{thm}

\begin{example}  It is clear that  $\alpha ^0 [X_K]=[X]$, and $\beta [X_K]=c X_\epsilon $ for some $c\in \BR$.
We compute $c$ as follows:
$$c_1(L)^{n+1}=\deg (\sL^{n+1}\alpha ^0  [X])= (n+1)\deg(c_1(L)^n \alpha ^2 \beta [X_K])
=(n+1)c_1(L_K)^n\beta([X_K]).$$
It follows that 
$$\beta([X_K])=\frac {c_1(L)^{n+1}}{(n+1)c_1(L_K)^n}X_\epsilon  =h_{L}(X_K)X_\epsilon .$$
Then  the lifting of $c_1(L_K)^i\in A^i(X_K)$  under $\alpha ^0$ can be defined as  
$$\alpha ^0  \sL^i [X_K]=\sL^i \alpha ^0  [X_K]-i \sL^{i-1} \beta[X_K]= c_1(L)^i-ih_{L} (X)c_1(L)^{i-1} X_\epsilon =c_1(L_0)^i,$$
where $L _0:=L(-h_L(X))$ is the unique rescaling of $L$ such that $c_1(L_0)^{n+1}=0$.
\end{example}

\begin{remark}[Triple products]\label{rem-triple} As one application of Theorem \ref{thm-ac-dec},  we  consider the following symmetric triple product of cycles :
$$\ol\Ch^i(X)\times \ol\Ch^j(X)\times \ol\Ch^k(X) \lra \BR, \qquad i+j+k=n+1, \qquad (x, y, z)_\BT=\wh\deg (x\cdot y\cdot z).$$
When one of $i, j, k$ is zero, this pairing 
is completely determined by the Beilinson--Bloch  height pairing on $\Ch^*(X)^0$ and by the intersection pairing 
on $A^*(X_K)$. 

In the general case, this pairing is more complicated to understand. If we take $x, y, z$ in the graded pieces of these groups with degree $\ell\le m\le n$, we expect  
$x\cdot y\cdot z=0$ if $\ell+m+n\ge 3$. This is clear if  $n=2$,  and is conjectured by Beilinson
if  $\ell=m=n=1$. Also when  $\ell=0$, $m=n=1$ this is essentially the Beilinson--Bloch  height pairing:
$$(x, y, z)_\BT=(x, yz)_\BB.$$
It remains two interesting cases: $(\ell, m, n)=(0, 0, 1)$ or $(0, 0, 0)$. 

If $(\ell, m, n)=(0, 0, 1)$,  by the perfectness  of the Beilinson--Bloch pairing  on $\Ch^*(X)^0$ we obtain a map
$$A^i(X_K)\times A^j(X_K)\lra \Ch^{i+j}(X_K)^0.$$
It is an exciting question to construct this pairing directly. 

If $\ell=m=n=0$, then there is a triple product for $A^*(X_K)$:
$$A^i(X_K)\times A^j(X_K)\times A^k(X_K) \lra \BR, \qquad i+j+k=n+1, \qquad (x, y, z)_\BT=\wh\deg (x\cdot y\cdot z).$$
This pairing has been used when $X_K$ is the  product of two curves in our previous work on Gross--Schoen cycles \cite{Zh10} and
triple product $L$-series \cite{YZZ21}.
  \end{remark}

\subsubsection*{$\sL$-  liftings}
Using the decomposition in Theorem \ref{thm-ac-dec}, we obtain an isomorphism: 
$$(\Im\alpha ^0)^\perp=\Im \alpha ^0+\Im\alpha ^1\iso \Ch^*(X_K).$$
The inverse of this map defines a canonical admissible  lifting called {\em $\sL$- lifting}:
$$\Ch^*(X_K)\lra \wh \Ch^*(X): z\mapsto z^\sL.$$
Then we can define an intersection pairing called {\em $\sL$-  pairing} on $\Ch^*(X_K)$ by
$$(\cdot, \cdot)_\sL: \Ch^i(X_K)\times \Ch^{n+1-i} (X_K)\lra \BR,\qquad (z_1, z_2)_\sL=\deg (z_1^\sL\cdot z_2^\sL).$$

This pairing has a close relation with Beilinson--Bloch's height pairing. If we assume the standard conjecture for $\Ch^*(X_K)^0$ and $A^*(X_K)$,
then there is a unique splitting of $\sL$-modules:
$$\Ch^*(X_K)\iso A^*(X_K)\oplus \Ch^*(X_K)^0, \qquad z\mapsto (z^\cl, z^0).$$
Thus, 
$$(z_1, z_2)_\sL=(z_1^0, z_2^0)_\BB.$$
This identity follows that $\Im \alpha ^0$ is isotropic and perpendicular to $\Im \alpha ^1$.

\subsection{Arithmetic standard conjectures}
In the following, we  want to compare three arithmetic standard conjectures: Gillet--Soul\'e's standard 
Conjecture \ref{conj-gs}, the standard conjecture on its subgroup $\ol\Ch^*(X)$ of admissible cycles, and the following 
conjecture by Beilinson on the group $\Ch^*(X_K)^0$ of homologically trivial cycles:
\begin{conj}[Beilonson]\label{conj-bb}
Let $i\le (n+1)/2$. 
\begin{enumerate}
\item We have an isomorphism
$$\sL^{n+1-2i}: \Ch^i(X_K)^0\iso \Ch ^{n+1-i}(X_K)^0.$$
\item For  $x\in \Ch^i(X_K)^0$, $x\ne 0$, and $\sL^{n+2-2i}x=0$, we have 
$$(-1)^i(x, \sL^{n+1-2i}x)>0.$$
\end{enumerate}
\end{conj}

One of our main results in this section  is the following:
\begin{thm}\label{thm-gsbb}
Assume  Grothendieck's  standard conjectures \cite[Conjectures 1, 2]{Gr}   for $A^*(X_K)$,  the  standard conjectures 
\ref{conj-lef} and \ref{conj-hod} for $A_\varphi^*(X_s)$, $A_\psi ^*(X_s)$ for every place $s$ of $K$, and the perfectness of the pairing
$$B^*(X)\times A^{n+1-*}(X)\lra \BR.$$
 Then the  following statements hold:
 \begin{enumerate}
 \item 
 The  standard conjecture \ref{conj-gs} for $\wh \Ch^*(X)$ is equivalent to the standard conjecture for $\ol\Ch^*(X)$;
 \item The standard conjecture for $\ol\Ch^*(X)$ implies  Beilinson's standard conjecture \ref{conj-bb}.
 \item Beilinson's standard conjecture \ref{conj-bb} implies the standard conjecture \ref{conj-gs} for any polarization of the form
 $L\otimes \pi^*(c)$, where     $c\in \wh \Pic (\CO_K)$ with  $\deg c$ sufficiently large.
 \end{enumerate}
 \end{thm}
 \begin{proof}
 The part one is implies by Lemma \ref{lem-gsadm}.
 For the other two parts, we apply Theorem \ref{thm-vg1}.
 \end{proof}

\subsection{Cohomology cycles}
In the following, we want to consider the group $H^*(X)$ of cohomology cycles in the function field case. For simplicity, we assume that $k=\bar k$. 
Then for each closed point $s$ of $S$, we have maps:
\begin{equation}\label{eq-ihi}
H_{X_s}^*(X)\overset {i_{s*} } \lra H^*(X)\overset {i_s^*} \lra H^*(X_s).
\end{equation}
Using Corollary \ref{cor-bbdg} and Theorem \ref{thm-harcoh}, we  get  a unique  decomposition as $\BQ[\sL]$-modules:
\begin{equation}\label{eq-pp}
H^*(X_s)=H_\varphi ^*(X_s)\oplus \CH _\psi ^*(X_s).
\end{equation}
Notice that $H_\varphi^*(X_s)\ne 0$ only if $X_s$ is singular. 
We define  the group $\ol H^*(X)$ of {\em admissible cohomological group}  $\ol H^*(X)$ as the class with harmonic curvatures in $\CH_\psi^*(X_s)$ for all $s$:
$$\ol H^*(X):=\left\{\alpha \in H^*(X): i_s^*\alpha \in \CH_\psi^*(X_s), \forall s\in S\right\}.$$
Then we get a decomposition 
$$H^*(X)=H_\varphi^*(X)+ \ol H^*(X), \qquad H_\varphi^*(X):=\sum _{s\in S} i_{s*} H_{X_s}^*(X_s).$$
We define a tree-step filtration on $\ol H^*(X)$ by 
\begin{equation}\label{eq-oH-fil}
F^i\ol H^*(X)=\begin{cases} \ol H^*(X)&\text{if $i=0$}\\
\Ker (\ol H^*(X)\lra H^*(X_{\bar K}))&\text{if $i=1$}\\
\Ker (\ol H^*(X)\lra H^*(X_K))&\text{if $i=2$}
\end{cases}
\end{equation}
where $H^*(X_K):=\varinjlim_U H^*(X_U)$ where $U$ runs through non-empty open subsets of $S$.
Let  $G^i\ol H^*(X)$ ($i=0,1,2$) denote the graded pieces. 
\begin{prop}\label{prop-lef-olh}
The  $\ol H^*(X)$ has a Lefschetz module structure with center $n+1$,
and each $G^i\ol H^*(X)$  has a  Lefschetz module structure with center $n+i$.
More precisely, let $j: U\emb S$ be any non-empty open subscheme of $S$ over which $f$ is smooth.
Then we have the following isomorphisms  $\BQ_\ell[\sL]$-modules,
$$G^i\ol H^*(X)\iso H^i(S, j_*R^{*-i}f_{U*}\BQ_\ell),$$
where the action of  $\sL$ on $H^i(S, j_*R^{*-i}f_{U*}\BQ_\ell)$ is induced from its action on the 
 sheaves $R^if_{U*}\BQ_\ell$.
\end{prop}
Applying Theorem \ref{thm-fil-dec}, we will get a  decomposition analogous to Theorem \ref{thm-ac-dec}:
\begin{thm}\label{thm-coh-dec} There is a unique splitting  of filtered  $\BQ_\ell$ modules
$$\alpha=(\alpha ^0, \alpha ^1, \alpha^2): G^0\ol H^*(X)\oplus G^1\ol H^*(X)\oplus G^2\ol H^*(X)\iso \ol H^*(X)$$ 
  such that $\alpha ^1$ is $\sL$-linear, and $\alpha ^0$ is $\sL$-linear  modulo $\Im \alpha ^2$ and $\sLambda$-linear  modulo $\Im\alpha ^1$.
\end{thm}

To prove Proposition \ref{prop-lef-olh},  we need to reinterpret the cohomology $\ol H^*(X)$ and its filtration 
 in terms of decomposition theorems for the complex $Rf_*\BQ_\ell$ on $S$ in  \cite[Theorem 5.4.5, 5.4.6]{BBDG} in characteristic $p$ and 
 \cite[Theorem 5.4.5, 5.4.6]{BBDG} in characteristic $0$. More precisely, we will compare $Rf_*\BQ_\ell$ with  intermediate complex 
 $\ol Rf_*\BQ_\ell:=j_{!*} R_{U*}\BQ_\ell$ which has  cohomology
 $\ol R^if_*\BQ_\ell =j_*R^i_{U*}\BQ_\ell$.  Our first step is to write down some  decompositions. 
 
 First of all, the analog Lemma \ref{lem-dec-pH} holds for sheaves  $D_c^b(S)$ with the same proof.
 Thus there is a (non-canonical) decomposition:
 \begin{equation}\label{eq-bbdg0}
 Rf_*\BQ_\ell \simeq \bigoplus _{m\in \BZ} \pR^mf_*\BQ_\ell K[-m].
 \end{equation}

 Secondly, the global analogue of the  Conjecture \ref {conj-perv-dec} holds for over $S$:  there is a  splitting of complexes:
\begin{equation}\label{eq-bbdg1}
\pR^if_*\BQ_\ell \iso \CH^0(\pR^if_*\BQ_\ell )\oplus \CH^{-1}(\pR^if_*\BQ_\ell)[1],
\end{equation}
\begin{equation}\label{eq-bbdg2}  \CH^{-1}(\pR^if_*\BQ_\ell)\iso j_*R^{i-1}f_{U*}\BQ_\ell=\ol R^{i-1}f_*\BQ_\ell.
\end{equation}

 Finally, the global analogue of Conjecture  \ref{conj-perv-lef} holds:
for any $i\le n+1$, we have an isomorphism
\begin{equation}\label{eq-bbdg3}\sL^i : \pR^{n+1-i} f_*\BQ_\ell \iso  \pR^{n+1+i}f_*\BQ_\ell (i).
\end{equation}

Now we want to translate these isomorphisms in terms of usual cohomology:
The first  isomorphisms (\ref{eq-bbdg0}), (\ref{eq-bbdg1}), and (\ref {eq-bbdg2})
gives a single  isomorphism:
 $$Rf_*\BQ_\ell= \bigoplus_m  (\Phi^m\oplus \ol R^mf_{*}\BQ_\ell)[-m],\qquad \Phi^m:=\Ker (R^mf_*\BQ_\ell\lra  \ol R^mf_{*}\BQ_\ell).$$
It is clear that each $\Phi^m$ is a complex of sheaves supported on $S\setminus U$.
The last isomorphism (\ref{eq-bbdg3}) gives two isomorphisms:
$$\sL^i : \Phi^{n+1-i}\iso  \Phi^{n+1+i} (i), \qquad \sL^i : \ol R^{n-i} f_{U}*\BQ_\ell \iso  \ol R^{n+i}f_{*}\BQ_\ell (i).$$
Applying Proposition \ref{prop-lef-dec} to the exact sequence 
$$0\lra \Phi^*\lra R^*f_*\BQ_\ell\lra \ol R^*f_{*}\BQ_\ell\lra 0$$
we obtain a unique decomposition of $\BQ_\ell[\sL]$ modules:
$$
R^*f_*\BQ_\ell\iso \Phi^*\oplus \ol R^*f_{*}\BQ_\ell.
$$
Thus we have proved the following:
\begin{lem} \label{lem-dec-rphi}There is a unique decomposition  of the form
(\ref{eq-bbdg0}) respecting the action by Lefschetz operator $\sL$.
More precisely, there are  canonical splittings of $\BQ_\ell[\sL]$ modules:
$$Rf_*\BQ_\ell\iso \Phi^*\oplus \ol Rf_*\BQ_\ell,$$
$$\ol Rf_*\BQ_\ell\iso \bigoplus_m  \ol R^mf_{*}\BQ_\ell [-m].$$
\end{lem}
At each closed point $s\in S$,  Lemma  \ref{lem-dec-rphi}  gives a splitting $\BQ[\sL]$ modules
$$H^*(X_s)=\Phi_s^*\oplus (\ol R^*f_{*}\BQ_\ell)_s.$$
This must be coincides with decomposition (\ref{eq-pp}). Thus we have 
$$\Phi_s^*=H_\varphi^*(X_s), \qquad (\ol R^*f_{*}\BQ_\ell)_s=\CH_\psi^*(X_s).$$
Over $S$, Lemma \ref{lem-dec-rphi} implies the following identities:
$$H^*(X)=H^*(S, Rf_*\BQ_\ell), \qquad H_\varphi^*(X)=H^*(S, \Phi^*), \qquad \ol H^*(X)= H^*(S, \ol Rf_*\BQ_\ell).$$
In particular, there is  a splitting  of cohomology:
\begin{equation}\label{eq-coh-dec}
\ol H^*(X)=H^*(S,  \ol Rf_*\BQ_\ell )=\bigoplus_{i=0}^2 H^i(S, \ol R^{*-i}f_*\BQ_\ell).
\end{equation}
This splitting is compatible  with the   filtration $F^i\ol H^*(X)$ defined as in (\ref{eq-oH-fil}). 
Thus we have $G^i\ol H^*(X)=H^i(S, \ol R^{*-i}f_*\BQ_\ell)$ which are Lefschetz modules with center $n+i$.
This completes the proof of the proposition. 

\begin{remark} Notice that the decomposition in Theorem \ref{thm-coh-dec} depends on the Lefschetz operator $\sL$
while the decomposition in (\ref{eq-coh-dec}) does not. More precisely module the Tate twists, 
write $\sL=\sL_\varphi+\sL_0+\sL_1+\sL_2$
with respect to the decomposition
$$H^2(X)=H_\varphi^2(X)\oplus \bigoplus_{i=0}^2 H^i(S, \ol R^{*-i}f_{*}\BQ_\ell).$$
Then on the decomposition (\ref{eq-coh-dec}), $\ol H^*(X)$, $\sL_\varphi$ acts trivially , $\sL_j$ acts by its action on $ \ol R^{*-i}f_{*}\BQ_\ell$,
$$\sL_j H^i(S, \ol R^{*-i}f_*\BQ_\ell)\subset H^{i+j}(S, \ol R^{*-i-j}f_*\BQ_\ell), \qquad j=0,1,2.$$
This shows that the two decompositions in Theorem \ref{thm-coh-dec} and (\ref{eq-coh-dec})
are different if  $\sL_1\ne 0$.
\end{remark}
\begin{remark}
As in Remark \ref{rem-triple}, there is a triple product on groups $\ol H^*(X)$. It would be interesting to construct such a triple pairing directly.
\end{remark}

\begin{remark}[$\ell$-adic height pairings]
Recall from the last section,  the group $\wh\Ch^*(X)$ of ``arithmetic Chow cycles" is defined as  the image of
$\Ch^*(X)$ in $H^{2*}(X)(*)$. If we assume local standard conjectures, then we can define the subgroup $\ol\Ch^*(X)$ 
of admissible classes which is in fact the intersection $\wh\Ch^*(X)\cap \ol H^{2*}(X)(*)$. Furthermore the filtrations on 
$\ol\Ch^*(X)$ and on $\ol H^{2*}(X)(*)$ are compatible. Thus two   Theorems \ref{thm-ac-dec}
and \ref{thm-coh-dec} give the same decomposition for $\ol\Ch^*(X)$. 

Without local standard conjectures, we can use  Theorem \ref{thm-coh-dec} to lift cycles in $A^*(X_K)$ and $\Ch^*(X_K)^0$ to $\ol H^{2*}(X)(*)$
by embeddings 
$$A^*(X_K)\incl G^0\ol H^{2*}(X)(*), \qquad \Ch^*(X_K)^0\incl G^1\ol H^{2*}(X)(*).$$
 In particular, there is a  well-defined $\ell$-adic height pairing on $\Ch^*(X_K)^0$ which  has been   defined by Beilinson \cite{Be}.
\end{remark}

\subsection{Divisors and $0$-cycles}
\subsubsection*{Local decomposition}
Let $s$ be a place of $K$ and $i=n$ or $1$.
 We want to define unconditionally a splitting for the group $A_i (X_s)$  of $i$-vertical cycles.
 We start with an intersection pairing and a power of Lefschetz operator:
$$(x, y): A_1(X_s)\otimes A_n(X_s)\lra \BQ,
\qquad (x, y)=\deg (i^*i_*x\cup y).$$
$$\sL^{n-1}: A_n (X_s)\lra A_1(X_s).$$
The following is the classical local index theorem:
\begin{lem}\label{lem-div} 
 For $x\in A_1 (X_s) $ we have 
 $$(x, \sL ^{n-1}x) \le 0.$$
 The equality holds if and only if $x\in \BQ\cdot [X_s]$.
\end{lem}

\begin{cor}\label{cor-sp}
Let $i=1$ or $n$. There is a decomposition
$$
A_i(X_s)=A_i^\psi (X_s)\oplus \CA^\varphi_i (X_s).
$$
Here $A_i^\psi (X_s)$ and $\CA^\varphi_i(X_s)$ are defined as follows:
\begin{enumerate}
\item If  $i=n$, we take 
$$A_n^\psi (X_s): =\BQ\cdot [X_s], \qquad \CA^\varphi _n (X_s):=\left\{x\in A_n(X_s), \deg \sL ^{n}x=0\right\}.$$
\item  If $i=1$, we take
$$A_1^\psi (X_s): =\left\{x\in A_1(X_s): (x, y)=0\quad  \forall y\in A_n(X_s)\right\},
\qquad  \CA^\varphi _1 (X_s):=\sL ^{n-1}\CA^\varphi_n.$$
\end{enumerate}
\end{cor}

\subsubsection*{Admissible cycles}
For $i=1, n$, we want to define the Arakelov Chow group  $\ol\Ch^i(X)$ of admissible cycles.
First we want to do some analysis on vertical cycles for each place $s$ of $S$.
Let $i_s: X_s\lra X$ be the embedding. Then there are maps
$$i_{s*}:  A_{n+1-i}(X_s)\lra \wh \Ch^i(X).$$
We define  $A_\varphi ^i(X)$ (resp $B^i(X)$ )
as the sum of $i_{s*}\CA_{n+1-i}^\varphi(X_s)$ (resp. $i_{s*}A_{n+1-i}^\psi(X_s)$ ), 
and 
 $\ol \Ch^i(X)$ to be the orthogonal complements of 
$A_\varphi ^{n+1-i} (X)$. Then there is a decomposition and an exact sequence:
$$\wh \Ch^i (X)=A_\varphi^i(X)\oplus \ol\Ch^i(X),$$
$$0\lra B^i(X)\lra \ol \Ch^i(X)\lra \Ch^i(X)\lra 0.$$
In terms of curvatures, the subgroup  $\ol\Ch^i(X)$ of $\wh \Ch^i(X)$ consists of elements $x$
such that the volume form $\sL^{n-i} x$ (as a functional over $\sum _si_*\Ch_n (X_s) $) is proportional to 
$c_1( L)^n$. 

Again, there is an intersection pairing
$$\ol \Ch^1(X)\otimes \ol \Ch^n(X)\lra \BR.$$
Let $C^i(X)$ be the null space of this pairing. Also there is a power of  Lefschetz operator
$$\sL^{n-1}: \wh \Ch^1(X)\lra \wh\Ch^n (X).$$
We have the following Hodge index theorem deduced from the local index theorem \ref{lem-div}
and the global Hodge index theorem of  Faltings  \cite{Fa},  Hriljac and  Moriwaki \cite{Mo}.

\begin{thm}[Hodge index theorem]\label{thm-glohod}
For any non-zero $x\in \ol \Ch^1(X)$ with $\sL^n x=0$, then 
$$(x, \sL^{n-1} x)<0.$$
\end{thm}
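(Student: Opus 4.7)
The plan is to deduce this strict inequality by combining the arithmetic Hodge index theorem of Faltings--Hriljac--Moriwaki (which yields the non-strict version) with the local Hodge index theorem \ref{lem-div} and the local decompositions of Corollary \ref{cor-sp}. The admissibility condition on $\ol\Ch^1(X)$ is what will prevent equality from occurring except at $x=0$.

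First I would invoke Moriwaki's arithmetic Hodge index theorem for the polarized arithmetic variety $(X,L)$: for any $x\in \wh\Ch^1(X)$ whose class satisfies $\sL^n x=0$ in $\wh\Ch^{n+1}(X)$, one has the non-strict inequality $(x,\sL^{n-1}x)\le 0$. Applied to our admissible $x$, this gives the weak form of the theorem, and it remains to rule out equality for $x\neq 0$.

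Next, suppose $(x,\sL^{n-1}x)=0$ and consider the exact sequence
$$0\lra B^1(X)\lra \ol\Ch^1(X)\lra \Ch^1(X_K)\lra 0.$$
Let $x_K\in \Ch^1(X_K)$ be the image of $x$. The equality case of Moriwaki's theorem (together with the classical Hodge index on $X_K$, since $\sL^n x_K=0$ in $\Ch^{n+1}(X_K)=0$ gives no constraint but $\sL^{n-1}$-orthogonality does) forces $x_K$ to be numerically trivial with respect to $\sL^{n-1}$ on $X_K$; by the Hodge index on $X_K$ one deduces $x_K=0$ in $A^1(X_K)$. After subtracting the appropriate admissible horizontal lift (which exists for divisors by Theorem \ref{thm-adm} applied to $\sL^{n-1}$-primitive elements) one may assume $x\in B^1(X)$ is purely vertical.

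Finally, for $x\in B^1(X)\cap\ol\Ch^1(X)$, write $x=\sum_s i_{s*}v_s$ over the places $s$ of $K$, where admissibility forces the curvature $i_s^*i_{s*}v_s=\omega_s(x)$ to be harmonic, i.e.\ $v_s\in \CA_1^\phi(X_s)$ at every place. The global intersection $(x,\sL^{n-1}x)$ then splits as a sum of local pairings $\sum_s (v_s,\sL^{n-1}v_s)$, each of which is $\le 0$ by Lemma \ref{lem-div}. Equality forces $v_s\in \BQ\cdot [X_s]\subset A_1^\psi(X_s)$ at every $s$; but the direct sum $A_1(X_s)=A_1^\psi(X_s)\oplus \CA_1^\phi(X_s)$ of Corollary \ref{cor-sp} forces $v_s=0$ at each place, and hence $x=0$.

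The main obstacle will be the reduction in the second step: one must verify that the admissible lifting absorbing $x_K$ is compatible with the condition $\sL^n x=0$ and preserves admissibility simultaneously at all places (archimedean and non-archimedean). At archimedean places, where Corollary \ref{cor-sp} is replaced by the classical K\"ahler Hodge index and harmonic decomposition, the same formal structure applies via Theorem \ref{thm-arch}, but the compatibility between Gillet--Soul\'e Green currents and the discrete local decomposition at finite places requires the unified harmonic formalism developed earlier in \S\ref{sec-loc}.
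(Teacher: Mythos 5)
Your high-level strategy — Moriwaki's arithmetic Hodge index theorem for the non-strict inequality, plus the local index theorem (Lemma~\ref{lem-div}) for the equality analysis — is the route the paper indicates, but your step~3 is genuinely broken and step~2 has an unaddressed compatibility issue.

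The decisive error is in step~3. For $x\in B^1(X)$ you write $x=\sum_s i_{s*}v_s$ and claim the global pairing splits as $\sum_s(v_s,\sL^{n-1}v_s)$ with each local term $\le 0$ and strictly $<0$ unless $v_s=0$. But the quadratic form $Q(x)=(x,\sL^{n-1}x)$ vanishes \emph{identically} on $B^1(X)$: since $B^1(X)_\BR=\pi^*\wh\Pic(\CO_K)_\BR=\BR\cdot\epsilon$, one has $Q(\epsilon)=\wh\deg(\pi^*\epsilon_K\cdot c_1(\ol L)^{n-1}\cdot\pi^*\epsilon_K)=\wh\deg(\epsilon_K\cdot\pi_*(c_1(\ol L)^{n-1})\cdot\epsilon_K)=0$ because $\pi_*(c_1(\ol L)^{n-1})\in\wh\Ch^{-1}(\CO_K)=0$. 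So equality $(x,\sL^{n-1}x)=0$ on the vertical piece gives you nothing; the correct way to finish there is to observe that $\sL^n$ is \emph{injective} on $B^1(X)$ (since $\wh\deg(\sL^n\epsilon)=\deg(c_1(L_K)^n)>0$), so $\sL^n x=0$ with $x\in B^1(X)$ already forces $x=0$. There is also a type confusion in the same step: for $x\in\wh\Ch^1(X)$ the components $v_s$ are divisor classes on the fiber, i.e.\ $v_s\in A_n(X_s)$, so the relevant decomposition from Corollary~\ref{cor-sp} is $A_n(X_s)=A_n^\psi(X_s)\oplus\CA_n^\phi(X_s)$ with $A_n^\psi(X_s)=\BQ\cdot[X_s]$ — not the spaces $A_1^\psi$ and $\CA_1^\phi$ you quote. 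In fact $v_s\in\BQ\cdot[X_s]$ holds by the \emph{definition} of $B^1(X)$, so the appeal to Lemma~\ref{lem-div} at this point is circular rather than a deduction.

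Step~2 is also under-justified. You need to verify that the "admissible horizontal lift" you subtract preserves \emph{both} the primitivity condition $\sL^n x=0$ and the equality $(x,\sL^{n-1}x)=0$, and Theorem~\ref{thm-adm} is a local statement at each place rather than a single global lift; patching the local lifts into a global admissible class with the right primitivity requires an argument. Finally, the inference "by the Hodge index on $X_K$ one deduces $x_K=0$" needs the numerical triviality of $\sL^{n-1}x_K$ on $X_K$ as an input, and the phrase "$\sL^{n-1}$-orthogonality does" leaves this hypothesis unverified; depending on the exact form of Moriwaki's theorem you invoke, this numerical triviality may even be a hypothesis rather than an output.
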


\begin{cor}\label{cor-0-cycle} The morphism $\sL^{n-1}$ is injective, $C^1(X)=0$, and 
$$\ol \Ch^n(X)=C^n (X)\oplus \sL^{n-1}\ol \Ch^1(X).$$
\end{cor}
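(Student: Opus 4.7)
The plan is to turn Theorem~\ref{thm-glohod} into a non-degeneracy statement for the symmetric bilinear form $B(a,b) := (a, \sL^{n-1}b)$ on $\ol\Ch^1(X)$, and then read off all three assertions mechanically. A preliminary step verifies that $\sL^{n-1}$ carries $\ol\Ch^1(X)$ into $\ol\Ch^n(X)$: for a vertical class $i_{s*}w$ with $w \in \CA_n^\phi(X_s)$, the projection formula gives $\sL^{n-1}i_{s*}w = i_{s*}(\sL^{n-1}w)$, and $\sL^{n-1}w \in \CA_1^\phi(X_s)$ by Corollary~\ref{cor-sp}, so the pairing of $\sL^{n-1}(\ol\Ch^1(X))$ with $A_\phi^1(X)$ vanishes. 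Granting this, injectivity of $\sL^{n-1}$ on $\ol\Ch^1(X)$ is immediate: if $\sL^{n-1}x = 0$, then a fortiori $\sL^n x = 0$ and $(x, \sL^{n-1}x) = 0$, and Theorem~\ref{thm-glohod} forces $x = 0$.

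Next I would prove non-degeneracy of $B$ on all of $\ol\Ch^1(X)$. Let $V_0 := \{x \in \ol\Ch^1(X) : \sL^n x = 0\}$, regarded as the kernel of the linear functional $x \mapsto \wh\deg(x \cdot c_1(L)^n)$. Since $L$ is arithmetically ample, $c_1(L)^{n+1} > 0$, so this functional is nonzero at $c_1(L)$ and $\ol\Ch^1(X) = V_0 \oplus \BR\cdot c_1(L)$. For $v \in V_0$ one has $B(v, c_1(L)) = v \cdot c_1(L)^n = 0$, so $V_0 \perp_B c_1(L)$. Theorem~\ref{thm-glohod} makes $B|_{V_0}$ negative definite, while $B(c_1(L), c_1(L)) = c_1(L)^{n+1} > 0$. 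Hence $B$ has signature $(1, \dim V_0)$ and is non-degenerate.

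The three assertions then fall out. For $C^1(X) = 0$: if $x \in C^1(X)$ and $y \in \ol\Ch^1(X)$, then $\sL^{n-1}y \in \ol\Ch^n(X)$ pairs trivially with $x$, so $B(x, y) = 0$ for every $y$, and non-degeneracy forces $x = 0$. For the decomposition, $\sL^{n-1}\ol\Ch^1(X) \cap C^n(X) = 0$ by the same reasoning: if $\sL^{n-1}x \in C^n(X)$ then $B(y, x) = (y, \sL^{n-1}x) = 0$ for all $y$, so $x = 0$, hence $\sL^{n-1}x = 0$. Surjectivity $\sL^{n-1}\ol\Ch^1(X) + C^n(X) = \ol\Ch^n(X)$ follows from the fact that, for any $z \in \ol\Ch^n(X)$, the functional $y \mapsto (y, z)$ on $\ol\Ch^1(X)$ is represented as $B(y, x) = (y, \sL^{n-1}x)$ for a unique $x$ by non-degeneracy of $B$, whence $z - \sL^{n-1}x \in C^n(X)$.

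The main obstacle is upgrading the Hodge index property on the primitive part to honest non-degeneracy of $B$ on all of $\ol\Ch^1(X)$; this is where one must carefully split off the one-dimensional positive direction coming from arithmetic ampleness of $L$, which is the classical Faltings--Hriljac packaging of the index theorem. A secondary subtlety is the representability step in the last paragraph, which implicitly uses that $B$ induces an isomorphism of $\ol\Ch^1(X)$ with its dual; this is automatic in a finite-dimensional (e.g.\ numerical) quotient, and in general follows from the explicit orthogonal decomposition $\ol\Ch^1(X) = V_0 \oplus \BR\cdot c_1(L)$ produced above together with self-duality of each factor under $B$.
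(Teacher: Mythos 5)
The paper states this corollary immediately after Theorem~\ref{thm-glohod} with no written proof, so there is no text to compare against; the task is just to verify your derivation. Your argument is essentially the standard Hriljac--Faltings--Moriwaki packaging of the Hodge index theorem, and it is correct. The preliminary step is checked properly: for $x\in\ol\Ch^1(X)$ and $w\in\CA_n^\phi(X_s)$, the projection formula together with Corollary~\ref{cor-sp} gives $(\sL^{n-1}x,\,i_{s*}w)=(x,\,i_{s*}\sL^{n-1}w)$ with $\sL^{n-1}w\in\CA_1^\phi(X_s)$, so this vanishes since $\ol\Ch^1(X)\perp A_\phi^n(X)$. The injectivity, the computation of the signature of $B(a,b)=(a,\sL^{n-1}b)$ on $V_0\oplus\BR\cdot c_1(L)$, and the two halves of the direct-sum assertion then all follow cleanly from non-degeneracy of $B$.

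Two conventions are being invoked silently, both of which you partly acknowledge and both of which are consistent with the paper's setup but worth making explicit. First, you identify the hypothesis ``$\sL^n x=0$'' in Theorem~\ref{thm-glohod} with the degree-zero condition $\wh\deg(\sL^n x)=0$; this is the form in which Moriwaki proves the index theorem, and it is what makes $V_0$ a hyperplane, but as written the two conditions live in different groups and the identification should be stated (it amounts to the assertion that the arithmetic degree map on the relevant top admissible Chow group is injective). Second, the surjectivity half of the decomposition uses that $B$ induces an isomorphism $\ol\Ch^1(X)\to\ol\Ch^1(X)^\ast$; non-degeneracy alone does not give this in infinite dimensions, so finite-dimensionality of $\ol\Ch^1(X)\otimes\BR$ (a consequence of Mordell--Weil and finiteness of the N\'eron--Severi group) is genuinely being used. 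You flag both points in your final paragraph; in a polished write-up it would be better to state them as hypotheses or to cite where the paper guarantees them, rather than treating them as self-evident.
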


\begin{remark}
It is conjectured that $C^n(X)=0$ or equivalently, 
$\ol\Ch^n(X)=\sL^{n-1}\ol\Ch^1(X)$. In fact, by the non-degeneracy of the Neron--Tate height pairing, 
$C^n(X)$ is isomorphic to the kernel of the Abel--Jacobi map $\Ch^n(X_K)^0\lra \Alb (X)_\BR$.
\end{remark}

\subsubsection*{Decomposition of arithmetic diviors}

Consider a 3-step filtration  for Arakelov group of divisors $\ol\Ch^1(X)$ by
$$F^i\ol\Ch^1(X)=\begin{cases} \ol\Ch^1(X),&\text{if $i=0$,}\\
\ol\Ch^1(X)^0,&\text{if $i=1$,}\\
B^1(X),&\text{if $i=2$.}
\end{cases}
$$
where $\ol\Ch^1(X)^0=\Ker (\ol\Ch^1(X)\lra H^{2} (X_{\bar K})(1))$. This filtration has graded quotients given by 
$$G^i\ol\Ch^1(X)=
\begin{cases}
A^1(X_K),&\text{if $i=0$,}\\
\Ch^1(X_K)^0,&\text{ if $i=1$,}\\
B^1(X),&\text{if $i=2$,}
\end{cases}
$$
where $$A^1(X_K)=\NS (X_K)\otimes \BR,\qquad \Ch^1(X_K)=\Pic (X_K)\otimes \BR, \qquad \Ch^1(X_K)^0=\Pic^0 (X_K)\otimes \BR.$$

\begin{thm}\label{thm-dec-pic}There is a unique splitting of the filtration on $\ol\Ch^1(X)$: 
$$\alpha=(\alpha ^0, \alpha ^1, \alpha ^2): G^0\ol\Ch^1(X)\oplus G^1\ol\Ch^1(X)\oplus G^2\ol \Ch^1(X)\iso \ol \Ch ^1 (X).$$
with following properties:
\begin{itemize}
\item $\alpha ^2$ is the  embedding of $B^1(X)$;
\item 
$\alpha^1$ is the  lifting  $\xi\in \Ch^1(X_K)^0$ to  a  class $\alpha ^1 \xi\in F^1\ol \Ch^1(X)$ such that $\deg c_1(L) ^n \alpha ^1\xi=0$;
\item  $\alpha ^0$ is the lifting which takes  a class $\xi\in A^1(X_K)$ to a class $\alpha ^0(\xi)\in \ol \Ch^1(X)$ with following two properties:
\begin{enumerate}
\item on $X_K$, $c_1(L_K) ^{n-1} \alpha ^0\xi_K\in \Ch^n(X_K)$ is proportional to $c_1(L_K)^n$;
\item the intersection number $c_1(L_0)^n \alpha ^0\xi=0$, where $L_0=L-h_L (X_K) X_\epsilon$.
\end{enumerate}
\end{itemize}
\end{thm}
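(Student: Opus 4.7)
Plan: The statement is the divisor-specific, unconditional version of Theorem~\ref{thm-ac-dec}. In degree one all the conjectural inputs of that general theorem are available: the global standard conjecture \ref{conj-gs} is supplied by the Hodge index theorem of Faltings--Hriljac--Moriwaki (Theorem~\ref{thm-glohod}), and the local standard conjectures needed to define the admissible subgroup $\ol\Ch^1(X)$ are supplied by the local index theorem (Lemma~\ref{lem-div}) and Corollary~\ref{cor-sp}. My plan is to construct the three components $\alpha^2,\alpha^1,\alpha^0$ in that order, tracking the $F^{\bullet}$-ambiguity at each step, and then verify they assemble into the claimed splitting.

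The map $\alpha^2$ is the tautological inclusion $B^1(X)=F^2\ol\Ch^1(X)\hookrightarrow\ol\Ch^1(X)$, with nothing to prove. For $\alpha^1$, given $\xi\in\Ch^1(X_K)^0$, any algebraic lift $\tilde\xi\in\wh\Ch^1(X)$ has curvature vanishing at every place (the curvature map factors through $A^1(X_K)$ and $\xi$ is homologically trivial), so in fact $\tilde\xi\in\ol\Ch^1(X)^0=F^1\ol\Ch^1(X)$; the set of such lifts is a torsor under $B^1(X)$. I define $\alpha^1\xi$ to be the \emph{Beilinson--Bloch lifting} — the unique admissible extension orthogonal to every vertical class in $B^*(X)$, whose existence and uniqueness are given by Proposition~\ref{prop-bb} combined with Theorem~\ref{thm-glohod}. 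This lift in particular satisfies the stated condition $\deg c_1(L)^n\alpha^1\xi=0$.

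For $\alpha^0$ I use the classical Hard Lefschetz decomposition $A^1(X_K)=\BR c_1(L_K)\oplus A^1(X_K)_0$, where $A^1(X_K)_0=\Ker(\sL^{n-1}\colon A^1\to A^n)$ is the primitive part. On the line $\BR c_1(L_K)$ the Example following Theorem~\ref{thm-ac-dec} forces $\alpha^0(c_1(L_K))=c_1(L_0)$ with $L_0=L-h_L(X_K)X_\epsilon$ the unique rescaling of $L$ satisfying $c_1(L_0)^{n+1}=0$: the identity $c_1(L_{0,K})=c_1(L_K)$ on the generic fiber gives $c_1(L_K)^{n-1}\cdot c_1(L_{0,K})=c_1(L_K)^n$ which is condition~(1), while $c_1(L_0)^{n+1}=0$ gives condition~(2). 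On the primitive part $A^1(X_K)_0$, I choose a $\Ch^1(X_K)$-representative for $\xi_0$ and correct it in $\Pic^0(X_K)_\BR$ so that condition~(1) upgrades from $A^n(X_K)$ (where both sides vanish automatically) to an equality in $\Ch^n(X_K)$; extending to an arithmetic class in $\ol\Ch^1(X)$ and then shifting by an element of $B^1(X)$ using the functional $\deg c_1(L_0)^n(\cdot)$ enforces condition~(2) and pins down the remaining ambiguity.

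The main obstacle is precisely the $\Ch^n(X_K)^0$-part of condition~(1) on the primitive summand: a naive algebraic lift satisfies (1) only modulo $\Ch^n(X_K)^0$, and the residual obstruction must be absorbed by a $\Pic^0(X_K)_\BR$-correction. That this correction exists and is unique reduces to the non-degeneracy of the Beilinson--Bloch height pairing in degree one — exactly the content of the global Hodge index Theorem~\ref{thm-glohod}, equivalently the Néron--Tate positivity after pairing with $c_1(L_K)^{n-1}$. Once this obstruction is handled, the $\sL$-linearity of $\alpha^1$ and the mixed $\sL$- and $\sLambda$-linearity of $\alpha^0$ (modulo $\Im\alpha^2$ and $\Im\alpha^1$ respectively) follow formally from the explicit formulae $\alpha^0\sL^i[X_K]=c_1(L_0)^i$ and $\beta([X_K])=h_L(X_K)X_\epsilon$ derived in the Example, so that $(\alpha^0,\alpha^1,\alpha^2)$ assembles to the unique splitting asserted by the theorem.
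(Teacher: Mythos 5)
Your strategy of constructing $\alpha^2,\alpha^1,\alpha^0$ directly, with Lemma~\ref{lem-div} and Theorem~\ref{thm-glohod} supplying the positivity that plays the conjectural role in Theorem~\ref{thm-ac-dec}, is the right one, and $\alpha^2$ is trivial. But there are two issues, one of reasoning and one of substance.

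For $\alpha^1$ you identify the normalized lift with a ``unique Beilinson--Bloch lifting.'' In codimension one this identification fails. Since $\xi\in\Ch^1(X_K)^0$, any admissible lift $\tilde\xi\in F^1\ol\Ch^1(X)$ already has zero curvature at every place, hence $\tilde\xi\cdot b=0$ for every vertical $b\in\wh\Ch^n(X)$ by the projection formula, and also $\epsilon\cdot b=0$ because $\epsilon^2=\pi^*(\epsilon_K^2)=0$. So \emph{every} element of the $B^1(X)$-torsor of admissible lifts of $\xi$ is orthogonal to all vertical classes; Proposition~\ref{prop-bb} and orthogonality to $B^*(X)$ pin down nothing. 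The normalization $\deg c_1(L)^n\alpha^1\xi=0$ is not a consequence of uniqueness of a BB lift — it is itself the defining condition, and it works because $\deg c_1(L)^n(\tilde\xi+t\epsilon)=\deg c_1(L)^n\tilde\xi+t\,c_1(L_K)^n$ is affine in $t$ with nonzero slope by ampleness of $L_K$. This is easy to repair, but as written the argument is circular.

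The genuine gap is the existence of $\alpha^0$ on the primitive part. You correctly reduce condition~(1) to finding $\delta\in\Pic^0(X_K)_\BR$ with $c_1(L_K)^{n-1}\cdot\delta=c_1(L_K)^{n-1}\cdot\eta$ in $\Ch^n(X_K)$, where $\eta$ is any lift of the primitive class $\xi$. Uniqueness of $\delta$ does follow from injectivity of $c_1(L_K)^{n-1}\cdot(-)$ on $\Pic^0(X_K)_\BR$, which Theorem~\ref{thm-glohod} gives. But you assert \emph{existence} by appealing to ``non-degeneracy of the Beilinson--Bloch pairing,'' which says nothing about whether $c_1(L_K)^{n-1}\cdot\eta$ lies in the finite-dimensional image $c_1(L_K)^{n-1}\cdot\Pic^0(X_K)_\BR$ inside the generally enormous group $\Ch^n(X_K)^0$; non-degeneracy of a pairing gives injectivity, not surjectivity onto an independently produced target. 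The issue is stark when $\Pic^0(X_K)=0$ (a K3 surface, many Shimura varieties): then no correction is available, and the claim collapses to $c_1(L_K)^{n-1}\cdot\eta=0$ in $\Ch^n(X_K)$ for every lift $\eta$ of a primitive class. For a K3 surface that is the Beauville--Voisin theorem — a nontrivial cycle-theoretic fact your sketch neither supplies nor replaces. A correct proof of the existence of $\alpha^0$ must identify the precise cycle-theoretic input making condition~(1) achievable, rather than deferring it to the height pairing.
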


\begin{cor}[$\sL$- liftings for  divisors]\label{cor-lef-pic}
For any line bundle $M$ there is a unique  arithmetic line bundle $M^\sL$ extending $M$ with the following two conditions:
\begin{enumerate}
\item $M^\sL$ is admissible in the sense that the volume form  $c_1(M^\an)\cdot c_1(L)^{n-1}$ is proportional to $c_1(L)^n$
at every place $s$ of $K$.
\item $\deg \wh c_1(L_0)^n \wh c_1(M^\sL)=0$.
\end{enumerate}
\end{cor}

\begin{remark}
In the case $X=C\times C$ a self-product of a curve and $L=p_1^*\xi+p_2^*\xi$ for an ample line bundle $\xi$.
Such a lifting has been constructed using adelic line bundles and used to study heights of Gross--Schoen cycles and 
triple product $L$-series, see \cite{Zh10, YZZ21}.
\end{remark}

\subsubsection*{Decomposition of $1$-cycles}
In the following we want to give a decomposition of $1$-cycles for a modified group
$$\ol\Ch^n(X)'=\ol\Ch^n(X)/(B^n(X)\cap C^n(X)), \qquad B^n(X)'=B^n(X)/(B^n(X)\cap C^n(X)).$$
Recall that $B^n(X)\cap C^n(X)$ is the null space for the pairing $B^n(X)\times A^1(X_K)\lra \BR$.
Then we still have an exact sequence
$$0\lra B^n(X)'\lra \ol\Ch^n(X)'\lra \Ch^n(X_K)\lra 0.$$

Consider a 3-step filtration for Arakelov group of $1$-cycles $\ol\Ch^n(X)$ by
$$F^i\ol\Ch^n(X)'=\begin{cases} \ol\Ch^n(X)',&\text{if $i=0$,}\\
\ol \Ch^n(X)^0,&\text{if $i=1$,}\\
B^n(X),&\text{if $i=2$,}
\end{cases}
$$
where $\ol\Ch^n(X)^0:=\Ker (\ol\Ch^n(X)'\lra H^{2n} (X_{\bar K})(n))$.
This filtration has graded quotients given by 
$$G^i\ol\Ch^n(X)'=
\begin{cases}
A^n(X_K),&\text{if $i=0$,}\\
\Ch^n(X_K)^0,&\text{ if $i=1$,}\\
B^n(X)',&\text{if $i=2$,}
\end{cases}
$$
where $$ A^n (X_K)\iso \BR \cdot c_1(L_K)^n,\qquad \Ch^n (X_K)^0=\ker \deg.$$
\begin{thm}\label{thm-dec-0c} There is a unique splitting of the filtration on $\ol\Ch^n(X)$ 
$$\alpha=(\alpha ^0, \alpha ^1, \alpha ^2): G^0\ol\Ch^n(X)'\oplus G^1\ol\Ch^n(X)'\oplus G^2\ol \Ch^n(X)'\iso \ol \Ch ^n(X)'.$$
with the following properties:
\begin{enumerate}
\item   $\alpha ^2$ is  the given embedding of $B^n(X)'$, 
\item $\alpha ^1$ is the unique lifting so that $\Im \alpha ^1$ is perpendicular to $\alpha ^0(A^1(X_K))$,
\item $\alpha ^0$ on $A^n (X_K)\simeq \BQ\cdot \sL ^n [X_s]$ to  take $\sL ^n [X_s]$ to $c_1(\ol L_0)^n$. 
\end{enumerate}
\end{thm}

\begin{cor}[$\sL$- liftings for $0$-cycles] \label{cor-lef-0c} For any $\xi\in \Ch^n(X_K)$, there is a  unique lifting $\xi^\sL\in \wh \Ch^n (X)$ 
modulo $B^n(X)\cap C^n(X)$
with the following two conditions:
\begin{enumerate}
\item $\xi^\sL$ is admissible in the sense that its curvature form  is proportional to $c_1(L)^n$ at every place $s$ of $K$.
\item $\deg  \alpha ^0(x)\xi^\sL=0$ for all $x\in \NS (X_K)$.
\end{enumerate}
\end{cor}

\subsubsection*{Modularity of arithmetic Kudla's generating series}
Consider a Shimura variety $X$ defined by an orthogonal (resp. hermitian space)  $V$ over a totally real (resp.  CM) field 
 $F$ of signature 
$(n, 2), (n+2, 0),\cdots (n+2, 0)$ (resp. $(n, 1), (n+1, 0), \cdots (n+1, 0)$.
Then there is a projective system of Shimura varieties $X_U$ of dimension $n$ over $F$ indexed  
by  open and compact subgroups of $\GSpin (\wh V)$ ($\GU (\wh V)$).
For each integer between $0$ and $n$ and each Bruhat--Schwartz function $\phi\in \CS(\wh V^r)$ 
there are generating series of Kudla cycles $Z_\phi$ of codimension $r$ special cycles, see
\cite{Kud, YZZ, Liu}. 

In \cite{Kud}, Kudla conjectured that these series 
of spaces are modular for symplectic groups $\GSp_r$ (resp. unitary group $U(r, r)$) over $F$.
 In his thesis, Wei Zhang proved such modularity 
under the condition that these series are convergent, see also extensions in \cite{YZZ, Liu}.
For unconditional modularity, there are the case of divisors in \cite{YZZ, Liu}, and the case $F=\BQ$ by work of 
Bruinier and Westerholt-Raum \cite{BW}.

Let $L$ be an arithmetic ample line bundle over an integral model of $X$. Then using our conditional $\sL$- lifting, 
we get generating series $Z_\varphi^\sL$. As the lifting is unique, the modularity of $Z_\varphi$ will imply the modularity of $Z_\varphi^\sL$.
In particular, when $r=1$ or $F=\BQ$ we have a modular generating series of arithmetic cycles $Z_\varphi^\sL$.

\appendix
\section{Lefschetz modules}\label{sec-lef}

In this appendix, we prove some results about splittings of    Lefschetz modules with filtrations of  2 or 3 steps. We will start with an easy result about two-step filtrations with centers $(n+1)/2$ and $n/2$ for their graded quotients,
which will be used almost everywhere in the paper. Then we will study the significantly more complicated case of filtrations with three steps that will only be used in \S\ref{sec-glb} for global cycles. 

Let $E$ be a field of  characteristic $0$. We will consider the abelian category $\BM$  of graded vector spaces $V^*=\oplus_{i\in \BZ}  V^i$ over $E$  with a linear operator $\sL$ of degree $1$: $\sL: V^*\lra V^{*+1}$ such that $V^i=0$ if $|i|>>0$.

By a {\em Lefschetz module} with center $n/2$, for $n$ a non-negative integer, we mean an object $V^*\in \BM$ 
such that for any integer $i\le n/2$, there is an isomorphism:
$$\sL^{n-2i}: V^i\iso V^{n-i}.$$

Define the {\em primitive part} by 
$$V_0^i:=\Ker (\sL ^{n+1-2i}|V^i), \qquad i\le n/2.$$
Then there is a Lefschetz decomposition:
  $$V^i=\sum _{j\le\min (n/2. i)} \sL ^{i-j }V_0^j.$$
  It is well known that for any Lefschetz structure $V^*$  with center $n/2$, there is a unique operator $\sLambda$ on $V^*$ with degree $-1$ such  that $[\sLambda, \sL]|V^i=n-2i$. 
  
  For the uniqueness of $\sLambda$, we notice that any two such $\sLambda$'s will have a difference operator 
  $\Delta$ with degree $-1$ and commuting with $\sL$. It follows that for any $j\le n/2$, 
  $$\sL^{n+1-j}\Delta V^j_0=\Delta \sL^{n+1-j}V^j_0=0.$$
  Thus $\Delta V^j_0=0$ since $\sL^{n+1-j}$ is bijective on $V^{j-1}$. Thus $\Delta =0$ on $V$ as it commutes with $\sL$.
  
  For the existence of $\sLambda$, we notice that the above primitive  decomposition realize $V$ as a direct sum of simple modules
   of forms $V(n, j):=\bigoplus_{i=j}^{n-j}E e_i$ with $j\le n/2$ and 
   $$\sL e_i=(n-j-i)e_{i+1}, \qquad \sLambda e_i=(i-j) e_{i-1},$$  
   where we treat $e_{j-1}=e_{n-j+1}=0$.

\subsection{Two-step filtrations}

\begin{prop}\label{prop-lef-dec}  Let $U^*$ and $W^*$ be 
Lefschetz modules with centers $m/2$ and $n/2$ respectively. 
Then in the category $\BM$, the following hold:
\begin{enumerate}
\item if $m>n$, then $\Hom_\BM (W, U)=0$;
\item if $m=n+1$, then $\Ext ^1_\BM(W, U)=0$;
\end{enumerate} 
\end{prop}
\begin{proof}  For the first one, let $\varphi\in \Hom_\BM (W, U)$.
Then for any $i\le n/2<m/2$, 
$$\sL^{m-2i}\varphi (W^i_0)=\sL^{m-n-1}\varphi (\sL^{n+1-2i}W^i_0)=0.$$
This implies that $\varphi (W^i_0)=0$. Thus $\varphi=0$.

For the second part, consider an extension $V$ of $W$ by $U$ in $\BM$:
$$0\lra U\lra V\lra W\lra 0.$$
Since $\sL^{n+1-2k}: U^k\lra U^{n+1-k}$ is an isomorphism,  there is a canonical lifting of primitive parts:
$$V^i_0\iso W^i_0, \qquad V_0^i:=\Ker (\sL^{n+1-2k}|V^i), \qquad i\le n/2.$$
Thus we can define a lifting of $W^i$ by
$$W^i_\sL:=\sum _{k\le\min (n/2, i)}\sL ^{i-k} V ^k_0.$$
The uniqueness of this lifting follows from part 1. 
\end{proof}

 \subsection{Three-step filtrations}

\begin{thm}\label{thm-fil-dec}
 Let $V^*$ be a Lefschetz module over $E$ with center $(n+1)/2$ with a three step  filtration in objects in $\BM$:
$$0\emb F^2V^*\emb F^1 V^*\emb F^0V^*=V^*$$
such that their graded quotients  $G^iV:=V^i/V^{i+1}$ are  Lefschetz modules with  center $(n+i)/2$.
Then there is a unique splitting of graded  $E$-modules
$$\alpha: G^0V^*\oplus G^1V^*\oplus G^2V^*\iso V^*$$ 
  such that the following hold for the restrictions $\alpha^i=\alpha |G^iV^*: G^iV^*\lra F^iV^*$:
\begin{enumerate}
\item $\alpha^1:  G^1V^*\lra V^*$ is  $\sL$-linear;
\item $\alpha^0: G^0V^*\lra V^*$ is $\sL$-linear  modulo $\Im \alpha ^2$ and $\sLambda$-linear  modulo $\Im\alpha ^1$.
\end{enumerate}
Moreover there is an  isomorphism $\beta: G^0V^*\lra G^2V^{*+1}$ of $\sL$-modules such that $\alpha$-translates the $E[\sL, \sLambda]$-module structure on $V^*$ into an  $E[\sL, \sLambda]$-module  structure on $\bigoplus G^iV^*$ as follows: for $x^i\in G^iV^*$, 
$$ \sL  \begin{pmatrix} x^0\\ x^1\\ x^2\end{pmatrix}
=\begin{pmatrix}\sL &0&0\\
0&\sL &0\\
\beta &0&\sL\end{pmatrix}\begin{pmatrix} x^0\\ x^1\\ x^2\end{pmatrix},
\qquad 
 \sLambda\begin{pmatrix} x^0\\ x^1\\ x^2\end{pmatrix}
=\begin{pmatrix}\sLambda &0&\beta^{-1}\\
0&\sLambda &0\\
0 &0&\sLambda\end{pmatrix}\begin{pmatrix} x^0\\ x^1\\ x^2\end{pmatrix}.$$
\end{thm}

First, let us reduce the proof to the case $G^1V^*=0$.
Apply Proposition \ref{prop-lef-dec} to the exact sequences
$$0\lra F^2V^*\lra F^1V^*\lra G^1V^*\lra 0, $$
$$ 0\lra G^1V^*\lra V^*/F^2V^*\lra G^0V^*\lra 0,$$
to obtain  unique $\sL$-linear liftings
$$\alpha ^1: G^1V^*\lra F^1V^*, \qquad v: G^0V^*\lra V^*/F^2V^*.$$
Let $\wt V^*$ be the preimage of $v(G^0V^*)$ under the projection $V^*\lra V^*/F^2V^*$. Then there is a direct sum of $\sL$-modules:
$$V^*=\alpha ^1(G^1V^*)\oplus \wt V^*.$$
Since both $V^*$ and $G^1V^*$ have Lefschetz structures with center $(n+1)/2$, so does $\wt V^*$. 
Thus, we reduce the proof of Theorem \ref{thm-fil-dec} to the case $G^1V^*=0$.
In this case, we rewrite the theorem as follows:

\begin{prop} \label{prop-Lam-dec}
Let 
$0\lra U^*\overset \epsilon \lra V^*\overset\eta\lra W^*\lra 0$
 be an exact sequence of graded vector spaces over $E$ with an action by $\sL$ of degree $1$. Assume that for some integer $n$, $\sL$ induces  Lefschetz structures on $U^*$, $V^*$, and $W^*$ with  centers of symmetry  $(n+2)/2$, $(n+1)/2$, and $n/2$  respectively.  Then there is a unique section $\alpha: W^*\lra V^*$ for $\eta$ such that $\alpha$ is  $\sLambda$-linear.
Moreover, there is a unique  $\sL$-isomorphism  $\beta$ of $W^*\lra U^{*+1}$ such that  the operators $\sL$ and $\sLambda$ on $V^*$ are given by 
$$\sL (\epsilon x+\alpha  y)=\epsilon (\sL x+\beta y)+\alpha   \sL y, \qquad \sLambda (\epsilon x+\alpha   y)=\epsilon \sLambda x+\alpha   (\beta^{-1} x+\sLambda y).$$
\end{prop}

The proof of this proposition uses several lemmas.

\begin{lem}\label{lem-ab} Under the assumption of Proposition \ref{prop-Lam-dec}, there is a  lifting 
  $\alpha : W^*\lra V^*$, and an $\sL$-isomorphism $\beta: W^*\lra U^{*+1}$  satisfying  the following conditions:
$$\sL \alpha =\alpha  \sL+\epsilon   \beta.$$
\end{lem}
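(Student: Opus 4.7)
The plan is to first construct the $\sL$-isomorphism $\beta\colon W^* \to U^{*+1}$ by working on primitive subspaces of the Lefschetz decomposition, then to define $\alpha$ explicitly via $\alpha := \sLambda_V\epsilon\beta - \epsilon\beta\sLambda_W$, where $\sLambda_V,\sLambda_W$ are the unique Hodge operators on the Lefschetz modules $V^*$ and $W^*$. This decouples existence (which requires a careful primitive analysis) from the verification of the commutation relation (which becomes an $\sl_2$-algebra computation).

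For the construction of $\beta$, I would first observe that each primitive element $w_0 \in W_0^k := \ker(\sL^{n+1-2k}\colon W^k \to W^{n+1-k})$ admits a unique primitive lift $\tilde w_0 \in V_0^k := \ker(\sL^{n+2-2k}\colon V^k \to V^{n+2-k})$: for any initial lift $\hat w_0$, the obstruction $\sL^{n+2-2k}\hat w_0$ lies in $\epsilon(U^{n+2-k})$, and the Lefschetz isomorphism $\sL^{n+2-2k}\colon U^k \iso U^{n+2-k}$ determines the unique correction in $\epsilon(U^k)$ producing $\tilde w_0$. Next, $\sL^{n+1-2k}\tilde w_0$ lies in $\epsilon(U^{n+1-k})$ and is killed by a further application of $\sL$; the Lefschetz decomposition of $U^*$ gives $\ker(\sL\colon U^{n+1-k} \to U^{n+2-k}) = \sL^{n-2k}(U_0^{k+1})$, with $\sL^{n-2k}\colon U_0^{k+1} \iso \sL^{n-2k}(U_0^{k+1})$ an isomorphism. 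I would then define $\beta(w_0) \in U_0^{k+1}$ by the normalization
\[
(n+1-2k)\,\epsilon\,\sL^{n-2k}\beta(w_0) \;=\; \sL^{n+1-2k}\tilde w_0,
\]
and extend $\beta$ to all of $W^*$ by $\sL$-linearity via $\beta(\sL^i w_0) := \sL^i\beta(w_0)$.

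With $\beta$ in hand and $\alpha := \sLambda_V\epsilon\beta - \epsilon\beta\sLambda_W$, the identity $\sL\alpha - \alpha\sL = \epsilon\beta$ follows by expanding both sides using the commutation relations $[\sL,\sLambda_V] = -H_V$ and $[\sL,\sLambda_W] = -H_W$, the $\sL$-linearity of $\epsilon$ and $\beta$ (which cancels the mixed terms), and the relation $\epsilon\beta\, H_W - H_V\,\epsilon\beta = \epsilon\beta$ reflecting the degree shift of $\epsilon\beta$. To verify that $\alpha$ is a section (i.e., $\eta\alpha = \mathrm{id}_{W^*}$), I would compute on $w = \sL^i w_0$: using the primitive decomposition $\epsilon\beta(w_0) = \frac{1}{n+1-2k}\sL\tilde w_0 + v_0^{(k+1)}$ in $V^{k+1}$ (with $v_0^{(k+1)} \in V_0^{k+1}$), together with $\eta(v_0^{(k+1)}) = -\frac{1}{n+1-2k}\sL w_0$ forced by $\eta\epsilon = 0$, the identity $\eta\alpha(w) = w$ reduces to the elementary arithmetic equality $(i+1)(n+1-2k-i) - i(n-2k-i) = n+1-2k$.

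The main obstacle I anticipate is verifying that $\beta$ is an isomorphism. Injectivity on primitive subspaces follows from the fact that the bottom-of-string map $\sL^{n+1-2k}\colon V_0^k \to V^{n+1-k}$ is injective, so $\beta(w_0) = 0$ forces $\tilde w_0 = 0$ and hence $w_0 = 0$; $\sL$-linearity then propagates injectivity to all of $W^*$. Surjectivity requires the dimension identity $\dim W^j = \dim U^{j+1}$, which I would derive by combining the Lefschetz symmetries $\dim V^j = \dim V^{n+1-j}$, $\dim U^j = \dim U^{n+2-j}$, $\dim W^j = \dim W^{n-j}$ with the exactness $\dim V^j = \dim U^j + \dim W^j$ and a telescoping argument. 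Given this, $\beta$ becomes a degree-wise injection between finite-dimensional spaces of equal dimension, hence an isomorphism.
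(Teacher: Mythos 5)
Your proposal is correct, and while the construction of $\beta$ is essentially identical to the paper's (lift $w_0\in W_0^k$ to the unique primitive $\tilde w_0\in V_0^k$, then set $(n+1-2k)\,\epsilon\,\sL^{n-2k}\beta(w_0)=\sL^{n+1-2k}\tilde w_0$ and extend $\sL$-linearly), you take a genuinely different route to $\alpha$. The paper defines $\alpha(\sL^j w_0) := \sL^j\tilde w_0 - j\,\epsilon\,\sL^{j-1}\beta(w_0)$ on Lefschetz strings, so $\eta\alpha=\mathrm{id}$ is immediate (the correction lies in $\Im\epsilon=\Ker\eta$), and then checks $\sL\alpha-\alpha\sL=\epsilon\beta$ by hand. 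You instead give the closed formula $\alpha:=\sLambda_V\epsilon\beta-\epsilon\beta\sLambda_W$, for which the commutator $[\sL,\alpha]=\epsilon\beta$ falls out in two lines from $[\sL,\sLambda]=-H$, the $\sL$-linearity of $\epsilon\beta$, and the degree-one shift of $\epsilon\beta$; the work is transferred to verifying $\eta\alpha=\mathrm{id}$, which your explicit $\sl_2$ coefficients handle correctly. Expanding your $\alpha$ on $\sL^j w_0$ in the primitive decomposition recovers the paper's formula exactly, so the two constructions agree; yours buys a cleaner structural expression for $\alpha$ at the cost of a slightly more computational check of the section property. One small caveat: your surjectivity argument for $\beta$ rests on the dimension identity $\dim W^j=\dim U^{j+1}$, which presumes each graded piece is finite-dimensional; the paper's category $\BM$ only requires $V^i=0$ for $|i|\gg0$, and its surjectivity proof is instead a direct construction (given $y\in U_0^{k+1}$, solve $\sL^{n+1-2k}z=\epsilon\sL^{n-2k}y$ for $z\in V^k$, show $z$ primitive, and observe $\eta z\in W_0^k$), which avoids that hypothesis. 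In the paper's applications everything is finite-dimensional, so this does not affect the result, but the paper's argument is the more robust one.
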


\begin{proof}
 Let us consider the primitive class of $V^i_0$ for $i\le (n+1)/2$, i.e., the kernel of  $\sL^{n+2-2i}$ on $V^i$.
Since this operator gives an isomorphism
 $$\sL^{n+2-2i}: U^i\iso U^{n+2-i}, $$
 there is  an isomorphism:
 $$\eta: V^i_0\iso \Ker (\sL^{n+2-2i}: W^i\lra W^{n+2-i})= \sL W^{i-1}_0+W^i_0.$$
 This defines a lifting $\alpha_i: W^i_0\emb V^i_0$. 
 
 Next we claim that there is a unique  $E$-linear isomorphism  $\beta_i: W^i_0\lra U^{i+1}_0$
 such that $$\sL^{n+1-2i}\alpha _i  (x)=(n+1-2i)\epsilon  \sL ^{n-2i}\beta_i (x), \qquad \forall x\in W^i_0.$$
This equation has a unique solution $\beta_i (x)\in U^{i+1}$, because the left hand side belongs to $\epsilon  U^{n+1-i}=\epsilon \sL ^{n+2-2(i+1)}U^{i+1}$.
Since $\alpha _i (x)\in U^i_0$, $\sL^{n+2-2i}\alpha _i (x)=0$, thus $\beta_i(x)\in U_0^{i+1}$. It follows that  $\beta_i$ is a linear map $W^i_0\lra U^{i+1}_0$. 
We claim that $\beta_i$ is bijective. If $\beta_i(x)=0$, then $\alpha_i(x)=0$ since $\sL ^{n+1-2i}|V^i$ is bijective. Thus $x=0$.
For surjectivity, let $y\in U_0^{i+1}$, then there is a $z\in V^i$ such that $\sL^{n+1-2i}z=\epsilon \sL ^{n-2i}y$. 
Then $z$ is primitive since $\sL^{n+2-2i}z=\epsilon \sL^{n+1-2i}y=0$. Thus $\eta z=\sL u+x $ with $u\in W_0^{i-1}$, $x\in W_0^i$. 
Since $\sL^{n+1-2i}\eta z=\eta\epsilon \sL ^{n-2i}y=0$, we see that $u=0$. Thus $z=\alpha _i x$ and $y=\beta_ix$.

Now we define $\beta$ from $\beta_i$ using the commutativity with $\sL$, and define $\alpha$ from $\alpha_i$ and $\beta$ using  equation 
$$\alpha \sL^j(x) =\sL^j\alpha_i(x)  -j\epsilon \sL^{j-1}\beta_i(x), \qquad x\in W^i_0, \qquad j\le n-2i.$$
To check this is well defined, we notice that $\sL^j (x)=0$ implies either $x=0$ or $j=n-2i$.
It is clear that $\alpha$ and $\beta$ satisfy the condition of the Lemma. 
\end{proof}

 \begin{lem}
 For the lifting $\alpha$ in Lemma \ref{lem-ab}, the  two equations in Proposition \ref{prop-Lam-dec} hold. In particular, $\alpha$ is $\sLambda$ linear.
 \end{lem}
 \begin{proof}
 It is clear that the first equation holds by construction. For the second equation, let  $\sLambda'$ denote 
 the right hand side of the second equation in Proposition \ref{prop-Lam-dec}
$$ \sLambda' (\epsilon x+\alpha   y)=\epsilon \sLambda x+\alpha   (\beta^{-1} x+\sLambda y).$$
 We need only show that $\sLambda'$  satisfies the same equation as $\sLambda$ on $V^*$:
 $$[\sLambda', \sL]|_{V^i}=n+1-2i.$$
 This can be checked easily.
 \end{proof}
 
 \begin{lem} The lifting $\alpha$ in  Lemma \ref{lem-ab}  is the unique $\sLambda$-equivariant lifting. 
 \end{lem}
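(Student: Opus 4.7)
The plan is to exploit the structural formula $\sLambda(\epsilon x)=\epsilon\sLambda x+\alpha\beta^{-1}x$ already established in the preceding lemma. It expresses the failure of $\epsilon:U^*\hookrightarrow V^*$ to be $\sLambda$-equivariant inside $V^*$, and the isomorphism $\beta$ appearing in this cross term provides exactly the nondegeneracy needed to pin down the $\sLambda$-equivariant section.

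Suppose $\alpha':W^*\to V^*$ is another $\sLambda$-equivariant section of $\eta$. Since $\eta\circ(\alpha'-\alpha)=0$, the difference factors through $\epsilon$, so I can write $\alpha'=\alpha+\epsilon\delta$ for a unique graded $E$-linear map $\delta:W^*\to U^*$ of degree $0$. The task reduces to showing $\delta=0$.

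For any $y\in W^*$ I would compute $\sLambda(\alpha' y)$ in two ways. On the one hand, $\sLambda$-equivariance of $\alpha'$ gives $\sLambda\alpha' y=\alpha'\sLambda y=\alpha\sLambda y+\epsilon\delta\sLambda y$. On the other hand, expanding $\sLambda(\alpha y+\epsilon\delta y)$ via the $\sLambda$-linearity of $\alpha$ from the preceding lemma together with the structural formula yields
\begin{equation*}
\sLambda\alpha' y=\alpha\sLambda y+\epsilon\sLambda\delta y+\alpha\beta^{-1}\delta y.
\end{equation*}
Equating these two expressions and reading off the $\alpha W^*$-component in the direct sum decomposition $V^*=\epsilon U^*\oplus\alpha W^*$ forces $\beta^{-1}\delta y=0$ for every $y\in W^*$. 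Since $\beta$ is an isomorphism, $\delta=0$, and therefore $\alpha'=\alpha$.

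There is no real obstacle here once the structural formula is in hand: the whole argument amounts to extracting a single component of one two-sided identity. Conceptually, the nondegenerate cross term $\alpha\beta^{-1}$ that detects the nontriviality of the extension $0\to U^*\to V^*\to W^*\to 0$ relative to $\sLambda$ is precisely what makes the $\sLambda$-equivariant lifting unique; any attempt to perturb $\alpha$ inside $\epsilon U^*$ would produce a spurious $\alpha W^*$-component under $\sLambda$ which cannot be matched by $\alpha'\sLambda$.
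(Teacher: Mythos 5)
Your argument is correct and is essentially the same as the paper's: write $\alpha'-\alpha=\epsilon\delta$, apply $\sLambda$-equivariance and the structural formula, and read off the $\alpha W^*$-component in $V^*=\epsilon U^*\oplus\alpha W^*$ to force $\beta^{-1}\delta=0$, hence $\delta=0$. The paper's version is terser (and contains some typographical slips, writing $\varphi$ for $\beta$ and $\sigma$ for $\alpha$), but the key step---that $\alpha\beta^{-1}\delta$ lies in $\Im\alpha\cap\Im\epsilon=0$---is identical.
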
 
\begin{proof} If there is  another $\sLambda$-equivariant lifting $\alpha'$,  then the difference is given by 
$$\alpha '-\alpha=\epsilon \delta$$
with  $\delta:  W^*\lra  U^*$ a linear map such that $\epsilon  \delta$  is equivariant with $\sLambda$:
$$\epsilon  \delta  \sLambda =\sLambda \epsilon  \delta=\epsilon  \sLambda \delta +\alpha  \beta^{-1}\delta.$$
Thus $\epsilon\beta ^{-1}\delta\in \Im \alpha \cap \Im \epsilon =0$ so is $\delta =0$.
\end{proof}

 \subsection{Symmetric pairings}\label{ss-sp}
 Let  notation be as in Theorem \ref{thm-fil-dec}. In the following, 
  we assume that $V^*$ has a symmetric,  non-degenerate intersection pairing:
 $$(\cdot, \cdot): V^*\otimes V^{n+1-*}\lra E, \qquad $$
 such that $F^2V^*$ and $F^1V^*$ are orthogonal complements to each other, and that the pairing is $\sL$-adjoint in the sense 
$$ (\sL x, y)=(x, \sL y),\qquad \forall x\in V^{*}, y\in V^{n-*}.$$
 This induces a non-degenerate $\sL$-adjoint pairing as follows:
 $$(\cdot, \cdot)_{i, 2-i}: G^iV^*\otimes G^{2-i}V^{n+1-*}\lra E.$$
Using the isomorphism $\beta: G^0V^*\lra G^2V^{*+1}$, we have the following  a  non-degenerate $\sL$-adjoint pairings $(\cdot, \cdot)_{i,i}$ on $G^iV^*$ for $i=0,  2$:
 $$(\cdot, \cdot)_{i, i}: G^iV^*\otimes G^iV^{n+i-*}\lra E,$$
 $$(x, y)_{0, 0}:=(x, \beta y)_{0, 2}, \qquad\forall  (x, y)\in G^0V^*\times G^0V^{n-*},$$
 $$ (u, v)_{2, 2}:=(\beta^{-1} u, v)_{0,2},\qquad\forall (u, v)\in G^2V^*\times G^2V^{n+2-*}.$$
 \begin{prop}\label{prop-sym}
 With the assumption and notation as above, we have the following assertions:
 \begin{enumerate}
 \item For each $i$, the pairing $(\cdot, \cdot)_{i, i}$ is symmetric, non-degenerate, and $\sL$-adjoint. 
 \item The $\Im \alpha ^0$ is perpendicular to $\Im\alpha ^1+\Im \alpha ^0$. 
 \end{enumerate}
 Thus the pairing $(\cdot,\cdot)$ is transformed by $\alpha$ to the following pairing on $\bigoplus G^iV^*$:
 $$((x, y, z), (x', y', z'))=(x, z')_{0, 2}+(y, y')_{1,1}+(z, x')_{2, 0} $$
  $$\forall (x, x')\in G^0V^*\times G^0V^{n+1-*}, \quad \forall (y, y')\in G^1V^*\times G^1V^{n+1-*},
 \quad \forall (z, z')\in G^2V^*\times G^2V^{n+1-*}.$$ 
  \end{prop}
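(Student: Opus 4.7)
The plan is to compute the matrix $(P_{ij})$ of the pairing transported via $\alpha$, where $P_{ij}(x^i,y^j):=(\alpha^i x^i,\alpha^j y^j)_V$, and read everything off its shape. The hypothesis $F^2V^*\perp F^1V^*$ gives $P_{22}=P_{12}=P_{21}=0$ at once, and by construction $P_{02},P_{20},P_{11}$ are exactly $(\cdot,\cdot)_{0,2},(\cdot,\cdot)_{2,0},(\cdot,\cdot)_{1,1}$. Thus both assertions reduce to proving the vanishings $P_{00}=P_{01}=P_{10}=0$, after which the symmetry of $(\cdot,\cdot)_{0,0}$ and $(\cdot,\cdot)_{2,2}$ falls out of residual identities and the block form of the pairing is immediate.

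A first observation is that on any Lefschetz module $W^*$ of center $c$ with a non-degenerate $\sL$-adjoint pairing, the dual operator $\sLambda$ is automatically $\sLambda$-adjoint. Indeed, the adjoint $\sLambda^*$ satisfies $[\sL,\sLambda^*-\sLambda]=0$ by an $\sL$-adjoint computation, so $\sLambda^*-\sLambda$ is an $\sL$-linear endomorphism of degree $-1$. Any such endomorphism vanishes: for a primitive $v\in W^a_0$ and an expansion $\phi(v)=\sum_{b\le a-1}\sL^{a-1-b}u_b$ with $u_b\in W^b_0$, the equality $\sL^{2c-2a+1}\phi(v)=0$ forces each $u_b=0$, since the terms land in distinct summands of the primitive decomposition and the exponent $2c-a-b$ is strictly less than the primitive-killing exponent $2c-2b+1$. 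Applied to $V^*$ this gives $\sLambda$-adjointness of $(\cdot,\cdot)_V$, and applied to each $G^iV^*$ it upgrades the $\sL$-adjoint induced pairings to $\sLambda$-adjoint ones.

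The form $P_{01}$ is $\sL$-adjoint: expanding $P_{01}(\sL x,y)$ via $\sL\alpha^0=\alpha^0\sL+\alpha^2\beta$ and using $P_{21}=0$ reduces it to $P_{01}(x,\sL y)$. Through the iso $(G^1V^{n+1-*})^\vee\cong G^1V^*$ furnished by $(\cdot,\cdot)_{1,1}$, $P_{01}$ becomes an $\sL$-linear map $G^0V^*\to G^1V^*$ of degree $0$ between Lefschetz modules of centers $n/2<(n+1)/2$, hence vanishes by Proposition \ref{prop-lef-dec}(1); symmetry of $(\cdot,\cdot)_V$ then gives $P_{10}=0$. For $P_{00}$, the $\sLambda$-linearity of $\alpha^0$ modulo $\Im\alpha^1$ combined with $P_{01}=P_{10}=0$ and the $\sLambda$-adjointness of $(\cdot,\cdot)_V$ yields $P_{00}(\sLambda x,y)=P_{00}(x,\sLambda y)$, while $\sL$-adjointness of $(\cdot,\cdot)_V$ and $\sL\alpha^0=\alpha^0\sL+\alpha^2\beta$ give the twisted relation
\[
P_{00}(\sL x,y)-P_{00}(x,\sL y)=(x,y)_{0,0}-(y,x)_{0,0}=:A(x,y).
\]
Since $(\cdot,\cdot)_{0,0}$ is $\sL$-adjoint (from $\sL$-linearity of $\beta$) and hence $\sLambda$-adjoint on $G^0V^*$ by the preceding step, $A$ satisfies $A(\sLambda x,y)=A(x,\sLambda y)$. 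Computing $P_{00}(x,[\sL,\sLambda]y)$ in two ways, using the above identities together with the commutator $[\sLambda,\sL]=n-2i$ on $G^0V^i$, yields
\[
2\,P_{00}(x,y)=A(\sLambda x,y)-A(x,\sLambda y)=0,
\]
so $P_{00}=0$ in characteristic $0$; feeding this back into the twisted relation gives $A=0$, i.e.\ the desired symmetry of $(\cdot,\cdot)_{0,0}$.

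With the vanishings in hand, part (2) is immediate and the block form of the pairing follows by direct matrix computation. For (1), symmetry of $(\cdot,\cdot)_{0,0}$ is the equality $A=0$ just proved, non-degeneracy and $\sL$-adjointness follow from the corresponding properties of $(\cdot,\cdot)_{0,2}$ together with the $\sL$-linearity and invertibility of $\beta$, and $(\cdot,\cdot)_{2,2}$ is handled identically via $\beta^{-1}$; $(\cdot,\cdot)_{1,1}$ inherits its properties directly from $(\cdot,\cdot)_V$. The main obstacle is the $P_{00}=0$ step: $P_{00}$, the asymmetry $A$, and the symmetry of $(\cdot,\cdot)_{0,0}$ are circularly entangled, and the circle is broken only by bringing in the $\sLambda$-side and using the $\sl_2$-commutator to decouple $A$ from $P_{00}$ so that both collapse to zero simultaneously.
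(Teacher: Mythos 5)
Your proof is correct and reaches the same conclusions, but it diverges from the paper's proof in two tactical respects worth noting. The paper first reduces to the case $G^1V^*=0$ by observing that $\alpha^1$ is an isometry and splitting off $\Im\alpha^1$, whereas you keep all three graded pieces and instead kill $P_{01}$ and $P_{10}$ directly by transporting $P_{01}$ through $(\cdot,\cdot)_{1,1}$ into an $\sL$-linear map $G^0V^*\to G^1V^*$ and invoking Proposition \ref{prop-lef-dec}(1) — a perfectly good alternative that avoids the quotient construction. For the remaining vanishings the paper simply specializes the $\sLambda$-adjointness identity: the case $x=x'=0$ yields symmetry of $(\cdot,\cdot)_{2,2}$ at once (hence of $(\cdot,\cdot)_{0,0}$ via $\beta$), and the case $z=x'=0$ yields isotropy of $\Im\alpha^0$, with no interplay between the two. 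You instead compute $P_{00}(x,[\sL,\sLambda]y)$ in two ways, which forces $2P_{00}=A(\sLambda\cdot,\cdot)-A(\cdot,\sLambda\cdot)=0$ and hence $A=0$; this is correct, but your closing remark that $P_{00}$, $A$, and the symmetry of $(\cdot,\cdot)_{0,0}$ are "circularly entangled" and resolvable only by the $\mathfrak{sl}_2$-commutator overstates the case — the paper's direct specialization disentangles them without any commutator argument. What your write-up does buy is a clean, explicit statement and proof of the lemma that a non-degenerate $\sL$-adjoint pairing on a Lefschetz module is automatically $\sLambda$-adjoint; the paper uses this silently under the phrase "these isomorphisms must be $\sLambda$-equivariant," and making it explicit is genuinely helpful.
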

 
 \begin{proof} It is easy to see that $(\cdot, \cdot)_{1,1}$ is symmetric, non-degenerate, and $\sL$-adjoint,
 and that  the isomorphism $\alpha$ in Theorem \ref{thm-fil-dec} restricting on $G^1V^*$ is an isometry. 
 Thus the quotient $V^*/\alpha G^1V^*$ is isomorphic to the orthogonal complement of $\Im\alpha _1$ with induced two-step filtration.
  So we have reduced the proof to the case $G^1V^*=0$.

 Let  $\CD(V)^*$ denote the dual Lefschetz structure  for $V^*$ with 
 $$\CD (V)^i=\Hom (V^{n+1-i}, E), \qquad \sL \psi=\psi\circ \sL, \quad \forall \psi\in \CD (V)^*$$ 
 Then the pairings $(\cdot, \cdot)$ and $(\cdot, \cdot)_{2, 0}$ induce   isomorphisms of Lefschetz modules 
 $$V^*\lra \CD (V)^*, \qquad G^0V^*\lra \CD (F^2V)^*.$$
 These isomorphisms must be $\sLambda$-equivariant. 
 
 Since $F^2V^*$ is isotropic in $V^*$, we have the following equality:
 $$(\alpha^0 x+\alpha^2  z, \  \alpha ^0 x'+\alpha ^2 z')=(x, \beta^{-1}z')_{0, 0}+(x', \beta^{-1} z)_{0, 0}+(\alpha ^2 z, \alpha^2  z'),$$
 $$\forall (x, x')\in G^0V^*\times G^0V^{n+1-*}V, \, \qquad \forall (z, z')\in G^2V^*\times G^2V^{n+1-*}V^*.$$
 Now  apply the adjoint property of $\sLambda$ to obtain the following equality:
 $$(\sLambda (\alpha ^0x+\alpha ^2 z), \ \alpha ^0 x'+\alpha ^2 z')=(\alpha ^0x+\alpha ^2 z,\  \sLambda (\alpha ^0 x'+\alpha ^2 z')).$$
 By Theorem \ref{thm-fil-dec}, this means the following equality:
 $$(\alpha ^0\sLambda x +\alpha ^2\sLambda z+\alpha ^0\beta ^{-1}z, \ \alpha ^0 x'+\alpha ^2 z')
 =(\alpha ^0 x+\alpha ^2 z, \ \alpha ^0\sLambda x'+\alpha ^2\sLambda z'+\alpha ^0\beta ^{-1} z').$$
  We  apply this equation to each of the following  cases
 \begin{enumerate}
 \item  $x=x'=0$: $(z', z)_{2, 2}=(z, z')_{2, 2}$;
 \item $z=x'=0$: $(\sLambda x, z')_{0, 2}=(x, \sLambda z')_{0, 2} +(\alpha ^0 x,\alpha ^0\beta ^{-1}z') $;
  \item $z=z'=0$: 
 $(\alpha^0 \sLambda x, \alpha^0  x')=(\alpha^0  x, \alpha^0  \sLambda x')$.
 \end{enumerate}
 The first case shows that $(\cdot, \cdot)_{2,2}$ is symmetric, and so is $(\cdot, \cdot)_{0, 0}$ as 
  $$(x, y)_{0, 0}=(x, \beta y)_{0, 2}=(\beta x, \beta y)_{2,2}, \qquad \forall x\in G^0V^*, y\in G^0V^{n-*}.$$
The second case then shows that $(\alpha ^0 z, \alpha ^0 x')=0$ for all $(x, x')\in G^0V^*\times G^0V^{n+1-*}$.
 Thus $\Im \alpha ^0$ is isotropic. 
 The third case is trivial. 
 The rest of the proposition is straightforward.
 \end{proof}

  \subsection{Standard conjectures}
  
   In this section, we work on   $E=\BR$.
  \begin{defn}[Standard conjecture]\label{def-conj}
   Let $M$ be a $\BR[\sL]$-module with a symmetric 
    pairing with center $n/2$:
    $$M^*\times M^{n-*}\lra \BR$$
    such that $\sL$ is self-adjoint  in the sense 
$$ (\sL x, y)=(x, \sL y),\qquad \forall x, y\in V^{*}.$$
We say that the {\em the standard conjecture} holds for  $(M, (\cdot, \cdot))$ with   center $n/2$,
   if the following two conditions hold:
   \begin{enumerate}
   \item {\em Hard Lefschetz theorem}: $M$ is a Lefschetz module with center $n/2$;
   \item {\em Hodge index theorem}: the pairing $(-1)^i(\cdot, \cdot)$ is positive definite on the primitive component $M_0^i=\Ker (\sL ^{n+1-2i}|M^i)$.
   \end{enumerate}
   \end{defn}

 \begin{prop}\label{prop-pos} With setting as in  Theorem \ref{thm-fil-dec}, the Hodge index theorem for $(V^*, (\cdot , \cdot )_V)$ is equivalent to the Hodge index theorem for 
 $(G^1V^*, (\cdot, \cdot)_{1,1})$ and 
 $(G^0V^*, (\cdot, \cdot )_{0, 0})$ 
 \end{prop}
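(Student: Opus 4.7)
The plan is to split off the $G^1$-summand via the orthogonality in Proposition \ref{prop-sym}, identify the remaining piece with a tensor product of Lefschetz modules, and conclude by a direct Hodge--Riemann computation.

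\emph{Reduction to $G^1V^*=0$.} By Proposition \ref{prop-sym}(2), $\alpha^1(G^1V^*)$ is $(\cdot,\cdot)_V$-orthogonal to $\tilde V^*:=\alpha^0(G^0V^*)\oplus\alpha^2(G^2V^*)$, and the restriction of $(\cdot,\cdot)_V$ to $\alpha^1(G^1V^*)$ is (a copy of) $(\cdot,\cdot)_{1,1}$. Since both $V^*$ and $G^1V^*$ have center $(n+1)/2$, the Hodge index theorem for $V^*$ is equivalent to the conjunction of the Hodge index theorems for $(G^1V^*,(\cdot,\cdot)_{1,1})$ and $(\tilde V^*,(\cdot,\cdot)_V)$, leaving me to match the latter with the Hodge index theorem for $(G^0V^*,(\cdot,\cdot)_{0,0})$.

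\emph{Identification with $G^0V^*\otimes H$.} Via $\beta\colon G^0V^*\iso G^2V^{*+1}$, identify $\tilde V^i\cong G^0V^i\oplus G^0V^{i-1}$ by $(x,z)\mapsto (x,\beta^{-1}z)=:(x,y)$. Theorem \ref{thm-fil-dec} then gives $\sL(x,y)=(\sL x,\,x+\sL y)$ and $\sLambda(x,y)=(\sLambda x+y,\,\sLambda y)$, while Proposition \ref{prop-sym} gives the pairing $((x,y),(x',y'))=(x,y')_{0,0}+(y,x')_{0,0}$. This coincides with the tensor product $G^0V^*\otimes H$, where $H=Ee_0\oplus Ee_1$ is the two-dimensional Lefschetz module of center $1/2$ ($\sL e_0=e_1$, $\sLambda e_1=e_0$) with pairing $(e_0,e_1)_H=1$.

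\emph{Primitive parametrisation.} A straightforward induction gives $\sL^k(x,y)=(\sL^k x,\,k\sL^{k-1}x+\sL^k y)$. For $i\le (n+1)/2$, writing $x=\sum_j \sL^{i-j}x_j$ in its Lefschetz decomposition ($x_j\in G^0V^j_0$), the equation $\sL^{n+2-2i}x=0$ forces $x_j=0$ for $j<i-1$, so $x=\sL x_{i-1}+x_i$; the remaining primitivity equation $\sL^{n+2-2i}\bigl(y+(n+2-2i)x_{i-1}\bigr)=0$ combined with the Lefschetz isomorphism $\sL^{n+2-2i}\colon G^0V^{i-1}\iso G^0V^{n+1-i}$ then forces $y=-(n+2-2i)x_{i-1}$. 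Hence $\tilde V^i_0$ is freely parametrised by $(x_{i-1},x_i)\in G^0V^{i-1}_0\oplus G^0V^i_0$ via $v=(\sL x_{i-1}+x_i,\,-(n+2-2i)x_{i-1})$, with the absent component set to zero at the endpoints $i=0$ and $i=(n+1)/2$.

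\emph{Hodge--Riemann evaluation and conclusion.} Expanding $(v,\sL^{n+1-2i}v)_V$ and invoking $\sL$-adjointness of $(\cdot,\cdot)_{0,0}$ together with the standard orthogonality of distinct primitive components in a polarised Lefschetz module, one obtains
\[
(v,\sL^{n+1-2i}v)_V=(n+1-2i)\,Q_i(x_i)-(n+3-2i)\,Q_{i-1}(x_{i-1}),
\]
where $Q_j(a):=(a,\sL^{n-2j}a)_{0,0}$ is the Hodge--Riemann form on $G^0V^j_0$. Since $n+1-2i\ge 0$ and $n+3-2i>0$ for $i\le (n+1)/2$ and the parameters $(x_{i-1},x_i)$ vary independently, positive definiteness of $(-1)^i(\cdot,\cdot)_V$ on $\tilde V^i_0$ is equivalent to positive definiteness of $(-1)^jQ_j$ on $G^0V^j_0$ for $j\in\{i-1,i\}$. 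As $i$ sweeps the primitive degrees of $V^*$, the indices $\{i-1,i\}$ cover every primitive degree of $G^0V^*$, giving the equivalence. The main obstacle is establishing the primitive orthogonality and handling the Lefschetz bookkeeping at the boundary indices, both of which are standard consequences of the $\mathfrak{sl}_2$-representation theory of polarised Lefschetz modules.
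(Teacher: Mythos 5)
Your argument is correct and follows essentially the same route as the paper's proof: reduce to $G^1V^*=0$ using the orthogonality from Proposition \ref{prop-sym}, parametrise the primitive subspace of $\tilde V^i$ by pairs $(x_{i-1},x_i)$ of primitive classes in $G^0V^*$, and evaluate the Hodge--Riemann form to arrive at the same formula $(n+1-2i)Q_i(x_i)-(n+3-2i)Q_{i-1}(x_{i-1})$. The one genuinely new ingredient you add is the identification $\tilde V^*\cong G^0V^*\otimes H$ with $H$ the two-dimensional Lefschetz module of center $1/2$; this packages the transfer formulas for $\sL$, $\sLambda$, and the pairing from Theorem \ref{thm-fil-dec} and Proposition \ref{prop-sym} into a single tensor-product structure, which makes the subsequent $\mathfrak{sl}_2$-bookkeeping more transparent, but it does not change the underlying computation. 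The paper simply carries out the same expansion directly in terms of $\alpha^0 x+\alpha^2 z$ without naming the tensor factor. Both treatments require the same two standard facts (orthogonality of distinct primitive Lefschetz levels in a polarised module, and $\sL$-adjointness of the pairing) and handle the boundary case $i=(n+1)/2$ identically by observing $G^0V^i_0=0$ there.
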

 \begin{proof} By Proposition \ref{prop-sym}, it is easy to reduce the proof  to the case $G^1V^*=0$. So let's assume $G^1V^*=0$ 
  and  let $\alpha ^0 x+\alpha ^2z$ be a primitive element in $V^i$.
 Then $x\in G^0V^i, z\in G^2V^i$ with $i\le (n+1)/2$ and 
 $$0=\sL ^{n+2-2i}(\alpha ^0 x+\alpha ^2 z)=\alpha ^0 \sL ^{n+2-2i} x+\alpha ^2 (\sL^{n+2-2i}z+(n+2-2i)\sL ^{n+1-2i}\beta x).$$
 This implies  $x=x_i+\sL x_{i-1}$ with $x_i, x_{i-1}$ primitives, and that $z=-(n+2-2i)\beta x_{i-1}$, where $x_i=0$ if $i=(n+1)/2$. 
 Now 
 \begin{align*}
 \sL ^{n+1-2i}(\alpha ^0  x+\alpha ^2 z)=&\alpha ^0 \sL ^{n+1-2i} x+
 \alpha ^2 (\sL^{n+1-2i}z+(n+1-2i)\sL ^{n-2i}\beta x)\\
 =&\alpha ^0  \sL ^{n+2-2i}x_{i-1}+\alpha ^2 (-\sL^{n+1-2i}\beta x_{i-1}+(n+1-2i) \sL ^{n-2i} \beta x_i).
 \end{align*}
 
 It follows that 
 \begin{align*}
 &(\alpha ^0 x+\alpha ^2 z, \sL ^{n+1-2i}(\alpha ^0 x+\alpha ^2 z))\\
 =&(\beta ^{-1}z, \sL ^{n+2-2i}x_{i-1})_{0, 0}+(-\sL^{n+1-2i}x_{i-1}+(n+1-2i) \sL ^{n-2i} x_i, x)_{0, 0}\\
 =&-(n+3-2i)(x_{i-1}, \sL ^{n+2-2i}x_{i-1})_{0, 0}+(n+1-2i) (x_i, \sL ^{n-2i} x_i)_{0, 0}.
 \end{align*}
The positivity of the last quantity is equivalent to the Hodge index theorem for $G^0V^*$. 
\end{proof}

In the rest of this section, we 
 let $V^*\in \BM$ be a $\sL$-module over $\BR$  with following structures:
 \begin{enumerate}
 \item A three-step  filtration for $V^*$ in objects in $\BM$:
$$0\emb F^2V^*\emb F^1 V^*\emb F^0V^*=V^*$$
with graded quotients  $G^iV:=V^i/V^{i+1}$. Assume that $G^2V^*$ is finite-dimensional. 
\item  a symmetric intersection pairing on $V^*$:
 $$(\cdot, \cdot): V^*\otimes V^{n+1-*}\lra E, \qquad $$
 such that $F^2V^*$ and $F^1V^*$ are orthogonal complements to each other, and that the pairing is $\sL$-adjoint in the sense 
$$ (\sL x, y)=(x, \sL y),\qquad \forall x\in V^{*}, y\in V^{n-*}.$$
\item an  operator $\epsilon\in\Hom _\BM (V^*, V^{*+1})$ such that $\Im (\epsilon)\subset F^2V^*$, $\Ker(\epsilon)\supset F^1V^*$, and that 
$$ (\epsilon x, y)=(x, \epsilon y),\qquad \forall x\in V^{*}, y\in V^{n-*}.$$
The induced map $G^0V^*\lra F^2V^{*+1}$ is still denoted by $\epsilon$.
\end{enumerate}
Notice that at this point, we do not assume that $G^iV$ are Lefschetz modules and that $(\cdot, \cdot)$ is non-degenerate
on $V^*$. These structures allows us to define the following objects:
\begin{enumerate}
\item New operators $\sL(c)=\sL+c\epsilon$ for $c\in \BR$.
\item A symmetric pairing $(\cdot, \cdot)_0$ on $G^0V^*$ by
$$(x, y)_0=(x, \epsilon y).$$
\item A symmetric pairing $(\cdot, \cdot)_1$ on $G^1V^*$:
$$(x, y)_1=(\bar x, \bar y), \qquad x\in G^1V^*, y\in G^1V^{n+1-*}$$
where  $\bar x\in F^1V^*, \bar y\in F^1V^{n+1-*}$ are any liftings of $x, y$.
\end{enumerate}

The  main result in this section  is the following:

\begin{thm}\label{thm-vg1}
 Assume the standard conjecture for $G^0V^*$.
  Then the  following statements hold:
 \begin{enumerate}
 \item 
 The  standard conjecture for $V^*$  implies  the standard conjecture for $G^1V^*$.
  \item The standard conjecture for $G^1V^*$ implies the standard conjecture for $V^*$ for any Lefschetz operator  of the form
 $\sL (c)=\sL+c\epsilon$ with   $c\in \BR$ sufficiently large.
 \end{enumerate}
 \end{thm}

Let us start with part 1 of Theorem \ref{thm-vg1}.

\begin{lem}\label{lem-fiac}
Assume  the  standard conjectures for $G^0V^*$ and $V^*$. Then $G^1V^*$ satisfies the standard conjecture. More precisely, the following statements hold:
\begin{enumerate}
\item $\epsilon: G^0V^*\iso F^2V^{*+1}$ as an $\sL$-modules. 
\item The submodule $C^*:=F^2V^*+\sLambda F^2V^{*+1}$ is stable under $\sL$ and $\sLambda$.
\item The $C^*$ satisfies the standard conjecture with center $(n+1)/2$, and the projection 
to $G^2V^*$ induces an isomorphism $\sLambda F^2V^*\iso G^0V^*$.
\item Let $D^*$ denote the orthogonal complement of $C^*$ in $V^*$. 
Then  $$F^1V^*=F^2V^*+D^*, \qquad G^1V^*\iso D^*.$$
\item The standard conjecture holds for $G^1V^*\iso D^*$.
\end{enumerate}
\end{lem}
\begin{proof}
By the standard conjecture for $V^*$, the intersection pairing on $V^*$ is non-degenerate.
Since $F^2V^*$ is perpendicular to $F^1V^*$ in $V^*$, the intersection pairing induces maps:
$$F^2V^*\emb  (G^0V^{n+1-*})^\vee\iso G^0V^{*-1},$$
where the last isomorphism follows from the standard conjecture for $G^0V^*$.
Combining this with the  map $\epsilon: G^0V^{*-1}\lra F^2V^*$ we obtain a chain of maps:
$$G^0V^{*-1}\overset \epsilon\lra F^2V^*\emb G^0V^{*-1}.$$
The composition is the identity map. Thus all these maps are bijective. 
This proves part 1.

For part 2, the stability of $C^*$  under $\sL$ is easy as 
$\sL\sLambda=[\sL,\sLambda]+\sLambda \sL$.
For $\sLambda$, we need to show that $\sLambda^2F^2V^*$ is included into $C^*$.
Since by part 1, every element in $F^2V^*$ can be written as a linear combination of elements of the form $\sL^i \epsilon x$ with $x$ a primitive element in $G^0V^*$, thus we need only show that $\sLambda ^2\sL^i \epsilon x\in C^*$ for al;l $x\in G^0V^*_0$.
It is easy to see that 
$$\sLambda \sL^i\epsilon x=\sL^i\sLambda \epsilon x\pmod {F^2V^*}, 
\qquad \sLambda^2\sL^i \epsilon x \equiv \sL^i\sLambda ^2\epsilon x\pmod{C^*}.$$
Thus it suffices to show $\sLambda ^2\epsilon x=0$.
Set $j=\deg x\le n/2$. Then 
$$\sL ^{n+1-2j}\epsilon x=\epsilon \sL^{n+1-2j} x=0.$$
Since $\deg \epsilon x=j+1$, this follows that $\epsilon x=\sL x_1+x_2$
with $x_1, x_2$ primitive in $V^*$. 
This shows that $\sLambda ^2\epsilon x=0$.
This proves part 2.

By part 2,  we see that $C^*$ is a Lefschetz submodule of $V^*$ with center $(n+1)/2$.
The standard conjecture for $V^*$  applies to $C^*$. Thus the induced pairing on $C^*$ is perfect. 
Since the $F^2V^*$ is isotropic under the intersection pairing, the pairing 
$$F^2V^*\times \sLambda F^2V^*\lra \BR$$
is non-degenerate on $F^2V^*$; and thus  it is perfect as $\dim \sLambda F^2V^*\le \dim F^2V^*$.
It follows that the projection $C^*\lra G^2V^*$ induces a bijection $\sLambda F^2V^*\iso G^0 V^*$.
This proves part 3.

By part 3,  $D^*$ is a Lefschetz module 
with center $(n+1)/2$ and satisfies the Hodge index theorem. As the $F^2V^*$ is the orthogonal complement of $F^1V^*$,
we see that $F^1V^*$ is the orthogonal complement of $F^2V^*$. Thus  the following identities hold:
$$F^1V^*=F^2V^*+D^*, \qquad G^1V^*=D^*.$$
Thus we have shown that $G^1V^*$ satisfies the standard conjecture. 
\end{proof}

Now we want to prove the second part of Theorem \ref{thm-vg1}.  Notice that the action of $\sL(c)$ is same as that of $\sL$ on $F^1V^*$.
By Proposition \ref{prop-pos}, we need only prove the following:

\begin{lem} With assumption as in Theorem \ref{thm-vg1}, part 2, then there is a $c\in\BR_{\ge 0}$   with the following properties:
\begin{enumerate}
\item $V^*$ is a Lefschetz module with center $(n+1)/2$ for $\sL (c)$;
\item $G^1V^*$ satisfies the Hodge index theorem. 
\end{enumerate}
\end{lem}

\begin{proof}
 The assumption on the non-degeneracy of  $F^2V^*\times G^0V^*$ implies that $F^2V^*$ is a Lefschetz module with center $(n+1)/2$.
Combining with a standard conjecture for $G^1V^*$, we also have the non-degeneracy of the pairing on $V^*$.

Now we apply Proposition \ref{prop-lef-dec} to the exact sequence: 
$$0\lra F^2V^*\lra F^1V^*\lra G^1V^*\lra 0$$
we obtain a splitting $\alpha: G^1V^*\lra F^1V^*$ of $\sL$ modules. 
Let $C^*$ denote the orthogonal complement of $\Im (\alpha)$ in $V^*$.
Then  $C^*$ is an $\sL(c)$ module with a non-degenerate  intersection and sits  in an exact sequence 
$$0\lra F^2V^*\lra C^*\lra G^0V^*\lra 0.$$
We want to show that $C^*$ is a $\sL(c)$ module with center $(n+1)/2$.

For any $i< (n+1)/2$, consider the map
$$\sL(c)^{n+1-2i}: C^i\lra C^{n+1-i} .$$
It is easy to see that 
$$\sL(c)^{n+1-2i}=\sL^{n+1-i}+(n+1-i)c\sL^{n-2i}\epsilon.$$
Since $C^i$ and $C^{n+1-i}$ have the same dimension say $d_i$, the set $S_i$ of $c\in \BR$ so that $\sL (c)^{n+1-2i}$ 
is not injective is the set of roots of a polynomial equation $P_i=0$ of degree $\le d_i$.
We need only   show that $P_i\ne 0$. 
Notice that $P_i=0$ is equivalent to that two operators $\sL^{n+1-2i}$ and $\sL^{n-2i}\epsilon$ have a common  null vector $x\in C^i$.
Then $\epsilon x\in G^2V^{i+1}$. Since $F^2V^*$ is a Lefschetz module with center $(n+2)/2$, the equation $\sL^{n-2i} \epsilon x=0$
implies that  $\epsilon x=0$.
Thus $x\in G^2V^i$. Then the equation $\sL^{n+1-2i} x=0$ implies that $x=0$.
This finishes the proof that $V^*$ is a Lefschetz module with center $(n+1)/2$, and thus the first part of Lemma. 

For the second part of Lemma, we notice that the lifting $\alpha ^0$ for $\sL (c)$ does not depend on the choice of $c$ because of   
explicit formulae for $\sL$ and $\sLambda$ in Theorem \ref{thm-fil-dec}. Thus we can use the same formula to obtain 
$$\beta (c)=\beta +c\epsilon: G^0V^*\iso F^2V^{*+1}.$$
It follows that the induced pairing $(\cdot, \cdot )_{0, 0}(c)$ on $G^0V^*$ has the form
$$(x, y)_{0, 0}(c)=(x, y)_{0, 0}+c (x, y)_0.$$
Now for an $i\le n/2$, and the pairing on the primitive part $G^0V^i_0$ multiplied by $(-1)^i$  has the form
$$(-1)^i(x, y)_{0, 0}(c)=(-1)^i(x, y)_{0, 0}+c(-1)^i (x, y)_0.$$
Since $(-1)^i (\cdot, \cdot )_0$ is positive definite, the above pairing is positive definite for $c$ sufficiently large. 
\end{proof}

\end{document}